\newcommand\cyr{%
 \renewcommand\rmdefault{wncyr}%
 \renewcommand\sfdefault{wncyss}%
 \renewcommand\encodingdefault{OT2}%
\normalfont\selectfont} \DeclareTextFontCommand{\textcyr}{\cyr}
\newtheorem{theorem}{Theorem}
\newtheorem{lemma}[theorem]{Lemma}
\newtheorem{corollary}[theorem]{Corollary}
\newtheorem{proposition}[theorem]{Proposition}
\newtheorem{remark}[theorem]{Remark}
\newtheorem{conjecture}[theorem]{Conjecture}
\newtheorem{definition}[theorem]{Definition}
\def\Z{\mathbb Z}
\def\N{\mathbb N}
\def\Q{\mathbb Q}
\def\R{\mathbb R}
\def\C{\mathbb C}
\def\F{\mathbb F}
\def\mbz{\mathbb Z}
\def\mbq{\mathbb Q}
\def\Ker{\operatorname{Ker}}
\def\Im{\operatorname{Im}}
\def\Re{\operatorname{Re}}
\def\End{\operatorname{End}}
\def\Aut{\operatorname{Aut}}
\def\Gal{\operatorname{Gal}}
\def\mod{\operatorname{mod}}
\def\disc{\operatorname{disc}}
\def\exp{\operatorname{exp}}
\def\lcm{\operatorname{lcm}}
\def\gcd{\operatorname{gcd}}
\def\GL{\operatorname{GL}}
\def\li{\operatorname{li}}
\def\O{\operatorname{O}}
\def\o{\operatorname{o}}
\def\log{\operatorname{log}}
\def\rad{\operatorname{rad}}
\def\ds{\displaystyle}
\def\pr{\operatorname{pr}}
\def\Res{\operatorname{Res}}
\newcommand{\gd}{\delta}
\newcommand{\gD}{\Delta}
\newcommand{\gs}{\sigma}
\newcommand{\mbc}{\mathbb{C}}
\newcommand{\ol}[1]{\overline{#1}}
\begin{document}

\title{
Constants in Titchmarsh divisor problems for elliptic curves
}

\date{June 11, 2017}


\author{
Renee Bell
}
\address[Renee Bell]{
\begin{itemize}
\item[-]
Department of Mathematics, Massachusetts Institute of Technology, 77 Massachusetts Ave, Cambridge, MA 02139
\end{itemize}
} \email[Renee Bell]{rhbell@math.mit.edu}

\author{
Clifford Blakestad
}
\address[Clifford Blackestad]{
\begin{itemize}
\item[-]
Department of Mathematics, University of Colorado Boulder, 395 UCB, Boulder, CO 80309
\end{itemize}
} \email[Clifford Blackestad]{Clifford.Blakestad@colorado.edu}

\author{
Alina Carmen Cojocaru
}
\address[Alina Carmen  Cojocaru]{
\begin{itemize}
\item[-]
Department of Mathematics, Statistics and Computer Science, University of Illinois at Chicago, 851 S Morgan St, 322
SEO, Chicago, 60607, IL, USA;
\item[-]
Institute of Mathematics  ``Simion Stoilow'' of the Romanian Academy, 21 Calea Grivitei St, Bucharest, 010702,
Sector 1, Romania
\end{itemize}
} \email[Alina Carmen  Cojocaru]{cojocaru@uic.edu}

\author{
Alexander Cowan
}
\address[Alexander Cowan]{
\begin{itemize}
\item[-]
Department of Mathematics, Columbia University, 2990 Broadway, New York, NY 10027
\end{itemize}
} \email[Alexander Cowan]{cowan@math.columbia.edu}

\author{
Nathan Jones
}
\address[Nathan Jones]{
\begin{itemize}
\item[-]
Department of Mathematics, Statistics and Computer Science, University of Illinois at Chicago, 851 S Morgan St, 322
SEO, Chicago, 60607, IL, USA
\end{itemize}
} \email[Nathan Jones]{ncjones@uic.edu}

\author{
Vlad Matei
}
\address[Vlad Matei]{
\begin{itemize}
\item[-]
Department of Mathematics, University of Wisconsin Madison, 480 Lincoln Drive, Madison, WI 53706
\end{itemize}
} \email[Vlad Matei]{matei@wisc.edu}

\author{
Geoffrey Smith
}
\address[Geoffrey Smith]{
\begin{itemize}
\item[-]
Department of Mathematics, Harvard University, 1 Oxford St, Cambridge, MA 02138
\end{itemize}
} \email[Geoffrey Smith]{gsmith@math.harvard.edu}

\author{
Isabel Vogt
}
\address[Isabel Vogt]{
\begin{itemize}
\item[-]
Department of Mathematics, Massachusetts Institute of Technology, 77 Massachusetts Ave, Cambridge, MA 02139
\end{itemize}
} \email[Isabel Vogt]{ivogt@mit.edu}

\thanks{
ACC's  work on this material was partially supported  by the Simons Collaboration Grant under Award No. 318454. IV's work was partially supported by the NSF Graduate Research Fellowship Program and grant DMS-1601946.
}

\begin{abstract}
Inspired by the analogy between the group of units $\F_p^{\times}$ of the finite field with $p$ elements and the group of points
$E(\F_p)$ of an elliptic curve $E/\F_p$, E. Kowalski, A. Akbary \& D. Ghioca, and  T. Freiberg \& P. Kurlberg investigated the asymptotic behaviour 
of elliptic curve sums analogous to the Titchmarsh divisor sum $\sum_{p \leq x} \tau(p + a) \sim C x$.
In this paper, we present a comprehensive study of the constants $C(E)$  emerging in the asymptotic study of these elliptic curve divisor sums.
Specifically, by analyzing the division fields of an elliptic curve $E/\Q$, we prove upper bounds for the constants $C(E)$  and, in the generic case of a Serre curve, we prove explicit closed formulae for $C(E)$ amenable to concrete computations.  Moreover, we compute the moments of the constants  $C(E)$ 
over two-parameter families of elliptic curves $E/\Q$. 
Our methods and results complement recent studies of average constants occurring in other conjectures about reductions of elliptic curves 
by addressing not only the average behaviour, but also the individual behaviour of these constants, and by providing explicit tools towards the computational verifications of the expected asymptotics.
\end{abstract}

\maketitle


\bigskip
\section{Introduction}

The Titchmarsh divisor problem concerns the asymptotic behaviour of the sum
$\ds\sum_{p \leq x} \tau(p + a)$, as a function of $x$,
where $p$ denotes a rational prime, $\tau(n) := \#\{d \geq 1: d \mid n\}$ denotes the divisor function, and $a$ denotes a fixed integer. 
The study of this sum  
has spanned over five decades
and
is intimately related to some of the most significant research in analytic number theory 
(see \cite{BoFrIw}, \cite{Fo}, \cite{Ha}, \cite{Li},  and \cite{Ti}).
By results about primes in arithmetic progressions which have become standard, we now know that, as $x \rightarrow \infty$,
\begin{equation}\label{titchmarsh}
\ds\sum_{p \leq x} \tau(p + a)
\sim
\frac{\zeta(2) \zeta(3)}{\zeta(6)} 
\ds\prod_{\ell \mid a} 
\left(1
-
\frac{\ell}{\ell^2 - \ell + 1}\right)
x,
\end{equation}
where $\zeta(\cdot)$ denotes the Riemann zeta function and $\ell$ denotes a rational prime.

Divisor problems similar to Titchmarsh's may  be formulated  in other settings, such as the setting of elliptic curves, as we now describe.
Let $E/\Q$ be an elliptic curve defined over the field of rational numbers.
 For a prime $p$ of good reduction for $E$, let $\overline{E}/\F_p$ be the reduction of $E$ modulo $p$.  
 From the basic theory of elliptic curves (see \cite[Ch. III, \S 6]{Si}), it is known that
 the group $\overline{E}(\F_p)$ may be expressed as a product of two cyclic groups,
 \begin{equation*}
 \overline{E}(\F_p) \simeq \Z/d_{1, p} \Z \times \Z/d_{2, p} \Z,
 \end{equation*}
 where 
 $d_{1, p} = d_{1, p} (E), d_{2, p} = d_{2, p} (E)$
 are uniquely determined positive integers 
 satisfying
 $ d_{1, p} \mid d_{2, p}$.
Determining the asymptotic behaviour of
sums over $p \leq x$ of  arithmetic functions evaluated at the elementary divisors  $d_{1, p}$ and $d_{2, p}$
may be viewed as 
Titchmarsh divisor problems for elliptic curves. 
Such problems
unravel
striking similarities, 
but also
intriguing contrasts,
to the original Titchmarsh divisor problem,
as illustrated in
\cite{AkGh},
\cite{AkFe},
\cite{Co},
\cite{CoMu},
\cite{Fe},
\cite{FeMu},
\cite{FrKu},
\cite{FrPo},
\cite{Ki},
\cite{Ko},
\cite{Po},
and
\cite{Wu}.

The focus of our paper is on the constants emerging in the following three  Titchmarsh divisor problems for elliptic curves. In all the expressions below, the letters $p$ and $\ell$ denote primes, with $p$ being a prime of good reduction for the given elliptic curve.

\begin{conjecture}\label{d-one}  (Kowalski \cite[Section 3.2]{Ko})

\noindent
Let $E/\Q$ be an elliptic curve.
Then, as $x \rightarrow \infty$,
\begin{align}
\ds\sum_{p \leq x} d_{1, p} & \sim C_{d_1, \text{non-CM}}(E) \  \li (x), & \text{if } E \text{ is without complex multiplication,} \\
\ds\sum_{p \leq x} d_{1, p} & \sim C_{d_1, \text{CM}}(E) \  x, & \text{if }E \text{ is with complex multiplication,} 
\end{align}
where
\begin{align}
C_{d_1, \text{non-CM}}(E) &\colonequals \ds\sum_{m \geq 1} \frac{\phi(m)}{[\Q(E[m]): \Q]}, \\
C_{d_1, \text{CM}}(E) &\colonequals  \Res_{s=0} \sum_{m \geq 1} \frac{\phi(m)}{[\Q(E[m]): \Q]} \ m^{-s} ,
\end{align}
with 
$\phi(m) := \#\{1 \leq k \leq m: (k, m) =1\}$ denoting the Euler function of $m$,
$\Q(E[m])$ denoting the $m$-division field of $E$,
and 
$\li(x)\colonequals \ds\int_2^x \frac{1}{\log{t}} \ dt$ denoting the standard logarithmic integral.
\end{conjecture}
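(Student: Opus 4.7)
The backbone of the argument is the elementary identity $n = \ds\sum_{m \mid n} \phi(m)$ combined with the Chebotarev density theorem applied to the division fields $\Q(E[m])$. The key observation is that for a prime $p$ of good reduction, $m \mid d_{1, p}$ if and only if the full $m$-torsion $E[m]$ is rational over $\F_p$, equivalently, if and only if $p$ splits completely in $\Q(E[m])$ (excluding the finitely many primes ramified there). Applying $d_{1, p} = \ds\sum_{m \mid d_{1, p}} \phi(m)$ and swapping the order of summation yields
\begin{equation*}
\ds\sum_{p \leq x} d_{1, p}
=
\ds\sum_{m \geq 1} \phi(m) \, \pi_E(x, m),
\qquad
\pi_E(x, m) \colonequals \#\{p \leq x : \Frob_p = 1 \text{ in } \Gal(\Q(E[m])/\Q)\}.
\end{equation*}

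An effective form of the Chebotarev density theorem (under GRH for the Dedekind zeta functions of $\Q(E[m])$) gives $\pi_E(x, m) = \li(x)/[\Q(E[m]) : \Q] + O(\text{error}(x, m))$, uniformly for $m$ not too large relative to $x$. In the non-CM case, Serre's open image theorem implies $[\Q(E[m]) : \Q] \gg m^4$, so $\phi(m)/[\Q(E[m]) : \Q] \ll 1/m^3$ and the series $\ds\sum_{m} \phi(m)/[\Q(E[m]) : \Q]$ converges absolutely to $C_{d_1, \text{non-CM}}(E)$. One then chooses a truncation $M = M(x)$ growing as fast as the Chebotarev estimates allow: the sum over $m \leq M$ contributes $(C_{d_1, \text{non-CM}}(E) + o(1)) \, \li(x)$, the tail over $m > M$ is $o(\li(x))$ by absolute convergence, and the aggregated Chebotarev error is $o(\li(x))$.

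In the CM case, the complex multiplication theory shows $\Gal(\Q(E[m])/\Q)$ sits inside the normalizer of a Cartan in $\GL_2(\Z/m\Z)$, with $[\Q(E[m]) : \Q] \asymp m^2$ for $m$ coprime to the conductor. Consequently the series $\ds\sum_m \phi(m)/[\Q(E[m]) : \Q]$ diverges logarithmically, and one instead works with the Dirichlet series $D_E(s) \colonequals \ds\sum_m \phi(m) [\Q(E[m]) : \Q]^{-1} m^{-s}$. Using the description of the $m$-division fields of a CM elliptic curve as ray class fields of the CM order, $D_E(s)$ factors into an Euler product whose local factors can be computed explicitly; this yields a meromorphic continuation to a neighborhood of $s = 0$ with at most a simple pole there. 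After truncating at $M(x)$ and applying partial summation against $\li(x)/\log(m)^{\text{something}}$, the divergent logarithmic accumulation in $m$ cancels with the $1/\log x$ in $\li(x) \sim x/\log x$, producing the asymptotic $C_{d_1, \text{CM}}(E) \, x$ with $C_{d_1, \text{CM}}(E) = \Res_{s=0} D_E(s)$.

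\textbf{Main obstacle.} The hardest step is obtaining a Chebotarev bound strong and uniform enough in $m$ to push the truncation $M(x)$ high enough: the degrees $[\Q(E[m]) : \Q]$ grow like a fixed power of $m$, so unconditional error terms force $M$ to be only a small power of $\log x$, which is insufficient to absorb the tail. Consequently, the cleanest version of the argument requires GRH (or at least an averaged Chebotarev estimate in the style of Murty--Murty--Saradha) for the family $\{\Q(E[m])\}_m$. In the CM case there is the additional analytic difficulty of locating and computing the residue of $D_E(s)$ at $s = 0$, for which one must carefully track the Euler factors at primes of supersingular versus ordinary reduction, together with the ramified primes dividing the conductor of the CM order.
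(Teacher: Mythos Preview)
The statement you are attempting to prove is a \emph{conjecture}, not a theorem; the paper does not prove it and explicitly records its open status. Immediately after stating Conjectures~1--3, the paper writes: ``Conjecture~\ref{d-one} was investigated by Freiberg and Pollack \ldots\ in the case that $E$ has complex multiplication; precisely, they proved that $\sum_{p \leq x} d_{1,p} \asymp_E x$. A similar result is not yet known if $E$ does not have complex multiplication, \emph{even under the Generalized Riemann Hypothesis}.'' So there is no proof in the paper against which to compare your proposal.

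Your outline is exactly the heuristic that \emph{motivates} the conjecture (and is essentially Kowalski's original derivation), but it does not close into a proof, and the gap is the one you yourself flag as the ``main obstacle'' --- except that GRH does \emph{not} suffice to overcome it. The difficulty is quantitative: since $d_{1,p}^2 \mid |\overline{E}(\F_p)|$, one has $d_{1,p} \leq \sqrt{p} + 1$, so the sum $\sum_m \phi(m)\,\pi_E(x,m)$ genuinely runs over $m$ up to about $\sqrt{x}$. Under GRH the effective Chebotarev error for $\Q(E[m])$ is of size $\sqrt{x}$ times a factor polynomial in $[\Q(E[m]):\Q]$ and $\log x$, which forces the truncation $M(x)$ to be at most a small power of $x$. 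For the tail $M < m \leq \sqrt{x}$ the only available upper bounds on $\pi_E(x,m)$ (e.g.\ via $m \mid p-1$ and Brun--Titchmarsh) give $\phi(m)\,\pi_E(x,m) \ll x/\log x$ for each individual $m$, so the accumulated tail is of order $\sqrt{x}\cdot x/\log x$, swamping the main term. No known averaging device (Bombieri--Vinogradov, Murty--Murty--Saradha, etc.)\ reaches level $\sqrt{x}$ for this family of fields. In the CM case the situation is only marginally better: Freiberg--Pollack obtain the correct order of magnitude $\asymp x$, but the asymptotic with the predicted constant $C_{d_1,\text{CM}}(E)$ is likewise not established. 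In short, your plan is the right heuristic picture, but the claim that ``the cleanest version of the argument requires GRH'' is too optimistic: GRH is not known to be enough, and the conjecture remains open.
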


\begin{conjecture}\label{tau-d-one}(Akbary-Ghioca \cite[Section 1]{AkGh})

\noindent
Let $E/\Q$ be an elliptic curve, 
with or without complex multiplication.
Then, as $x \rightarrow \infty$,
\begin{equation}\label{tau-d-one-asymp}
\ds\sum_{p \leq x} \tau(d_{1, p}) \sim C_{\tau(d_1)}(E) \  \li (x),
\end{equation}
where
\begin{equation}
C_{\tau(d_1)}(E) := \ds\sum_{m \geq 1} \frac{1}{[\Q(E[m]): \Q]}.
\end{equation}
\end{conjecture}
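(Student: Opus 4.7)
My plan is to convert the divisor sum into a sum of Chebotarev counts over division fields, handle the small-$m$ range with effective Chebotarev, and dispatch the tail with a sieve. Expanding $\tau(n)=\sum_{m\mid n}1$ gives
$$\sum_{p\leq x}\tau(d_{1,p})=\sum_{m\geq 1}\#\{p\leq x:p\text{ good for }E,\ m\mid d_{1,p}\}.$$
For $p\nmid m\Delta_E$, the condition $m\mid d_{1,p}$ is equivalent to $\overline{E}(\F_p)\supseteq E[m]$, equivalently to Frobenius at $p$ acting trivially on $E[m]$, equivalently to $p$ splitting completely in $\Q(E[m])/\Q$. Because $m\mid d_{1,p}$ forces $m^{2}\mid\#\overline{E}(\F_p)\leq(\sqrt{p}+1)^{2}$ by Hasse, we automatically have $m\leq\sqrt{x}+1$, so the outer $m$-sum is effectively finite.

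Next I would fix a cutoff $M=M(x)\to\infty$ slowly and treat $m\leq M$ via effective Chebotarev applied to $\Q(E[m])/\Q$: under GRH, Lagarias--Odlyzko--Serre yields
$$\#\{p\leq x:p\text{ splits completely in }\Q(E[m])\}=\frac{\li(x)}{[\Q(E[m]):\Q]}+O\bigl(\sqrt{x}\log(mN_E x)\bigr).$$
Summing over $m\leq M$ and using $[\Q(E[m]):\Q]\ll m^{4}$ (from $\GL_2(\Z/m\Z)$-considerations) keeps the total error $o(\li(x))$ for an appropriate $M$. Serre's open image theorem (in the non-CM case; the CM theory of Deuring in the CM case) gives $[\Q(E[m]):\Q]\gg_{E}m^{4}$ (resp.\ $\gg_{E}m^{2}$), which both guarantees the absolute convergence of $C_{\tau(d_1)}(E)$ and ensures that $\sum_{m\leq M}[\Q(E[m]):\Q]^{-1}\to C_{\tau(d_1)}(E)$.

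For the tail $M<m\leq\sqrt{x}+1$, a Brun--Titchmarsh-type upper bound for primes splitting completely in $\Q(E[m])$ handles $m$ not too close to $\sqrt{x}$; for $m$ near $\sqrt{x}$ one exploits the Hasse congruence $a_p\equiv p+1\pmod{m^{2}}$ directly, showing each $p\leq x$ contributes $O(1)$ admissible traces, then bounds primes with prescribed trace. Combined with the open-image lower bound the total tail contribution is $o(\li(x))$. The main obstacle is precisely the intermediate range where $[\Q(E[m]):\Q]$ is comparable to $x$: there the Chebotarev main term and error become comparable and one must sieve carefully, unconditionally replacing GRH with a Bombieri--Vinogradov-type statement for division fields; in the CM case the slower growth $[\Q(E[m]):\Q]\asymp m^{2}$ makes both the convergence of $C_{\tau(d_1)}(E)$ and the tail sieve substantially more delicate, requiring the explicit Hecke-character description of $\Q(E[m])$ as developed in Akbary--Ghioca \cite{AkGh}.
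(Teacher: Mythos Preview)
The paper contains no proof of this statement: it is recorded as Conjecture~\ref{tau-d-one}, and the paper's purpose is to analyze the constant $C_{\tau(d_1)}(E)$, not to establish the asymptotic \eqref{tau-d-one-asymp}. Immediately after stating the conjectures, the paper notes that Akbary and Ghioca \cite{AkGh} proved \eqref{tau-d-one-asymp} under GRH when $E$ is non-CM and unconditionally when $E$ is CM, building on the methods of \cite{CoMu}; those arguments are not reproduced. So there is no ``paper's own proof'' to compare against.

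Your outline is, in broad strokes, the strategy of \cite{CoMu} and \cite{AkGh}: open $\tau(d_{1,p})$ as a divisor sum, interpret $m\mid d_{1,p}$ as complete splitting in $\Q(E[m])$, apply effective Chebotarev on a small-$m$ range, and control the tail. Two points are worth flagging. First, your remark about ``unconditionally replacing GRH with a Bombieri--Vinogradov-type statement for division fields'' in the non-CM case is exactly the obstruction, not a step one can simply carry out: no such result of the required strength is presently available, which is why the non-CM case remains conditional on GRH in the literature the paper cites. Second, in the CM case the convergence of $\sum_{m\geq 1}[\Q(E[m]):\Q]^{-1}$ is not in doubt --- the paper itself verifies it (see \eqref{sumoverKdiv} and part (i) of Theorem~\ref{constant-nonSerre}) --- and the reason the CM argument in \cite{AkGh} is \emph{unconditional} is that the division fields are abelian over the CM field $K$, so one can invoke unconditional error terms for Hecke $L$-functions rather than GRH-conditional Chebotarev.
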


\begin{conjecture}\label{d-two} (Freiberg-Kurlberg \cite[Section 1]{FrKu})

\noindent
Let $E/\Q$ be an elliptic curve, 
with or without complex multiplication.
Then, as $x \rightarrow \infty$,
\begin{equation}\label{d-two-asymp}
\ds\sum_{p \leq x } d_{2, p} \sim \frac{1}{2} C_{d_2}(E) \  \li \left(x^2\right),
\end{equation}
where
\begin{equation}
C_{d_2}(E) := \ds\sum_{m \geq 1} 
\frac{(-1)^{\omega(m)} \phi(\rad{m}) }{m[\Q(E[m]): \Q]},
\end{equation}
with 
$\omega(m) := \ds\sum_{\ell \mid m} 1$ denoting the number of distinct prime factors of $m$ 
and $\rad(m) := \ds\prod_{\ell \mid m} \ell$ denoting the product of distinct prime factors of $m$.
\end{conjecture}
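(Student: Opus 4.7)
The plan is to reduce $\sum_{p \leq x} d_{2,p}$ to a sum of Chebotarev counts in the division fields $\Q(E[m])$ via a Möbius-type expansion of $1/d_{1,p}$. The starting observation is
\[
d_{2,p} \;=\; \frac{|\overline{E}(\F_p)|}{d_{1,p}} \;=\; \frac{p + 1 - a_p}{d_{1,p}},
\]
together with the characterization $m \mid d_{1,p} \iff E[m] \subseteq \overline{E}(\F_p) \iff \Frob_p$ acts trivially on $E[m]$, which, for $p$ of good reduction and unramified in $\Q(E[m])$, is equivalent to $p$ splitting completely in $\Q(E[m])$. To convert the divisibility condition into a form suited to the Chebotarev density theorem, I would apply Möbius inversion to the arithmetic function $f(n) = 1/n$: since its Möbius transform is multiplicative with value $(1 - \ell)/\ell^a$ on $\ell^a$, one obtains the identity
\[
\frac{1}{n} \;=\; \sum_{m \mid n} \frac{(-1)^{\omega(m)} \phi(\rad m)}{m}.
\]
This is precisely the arithmetic weight appearing in $C_{d_2}(E)$, which explains where the specific shape of the constant comes from.

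Substituting $n = d_{1,p}$, multiplying by $p+1-a_p$, summing over $p \leq x$, and interchanging the two summations yields
\[
\sum_{p \leq x} d_{2,p} \;=\; \sum_{m \geq 1} \frac{(-1)^{\omega(m)} \phi(\rad m)}{m} \sum_{\substack{p \leq x \\ p \text{ splits in } \Q(E[m])}} (p + 1 - a_p).
\]
For each fixed $m$, the Chebotarev density theorem gives natural density $1/[\Q(E[m]):\Q]$ for the splitting primes, and partial summation against $\pi_m(t) \sim \li(t)/[\Q(E[m]):\Q]$ converts the inner sum into an asymptotic with main term proportional to $\li(x^2)/[\Q(E[m]):\Q]$. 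The $1$-contribution and the $a_p$-contribution are of strictly lower order, the latter being $O(x^{3/2}/\log x)$ by Hasse's bound $|a_p| \leq 2\sqrt p$. Summing formally over $m$ then recovers the conjectured shape $\tfrac{1}{2} C_{d_2}(E) \li(x^2)$.

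The main obstacle is justifying the interchange of the infinite sum over $m$ with the $x \to \infty$ limit, that is, producing a uniform-in-$m$ error estimate. The natural strategy is to truncate at some $M = M(x)$: for $m \leq M$ apply an effective Chebotarev bound in $\Q(E[m])$ (under GRH for the Dedekind zeta function of $\Q(E[m])$, the Lagarias–Odlyzko bound gives error $O(\sqrt{x} \log(m x \Delta_E))$ per field), and bound the tail $m > M$ by absolute convergence. The latter relies on the lower bound $[\Q(E[m]):\Q] \gg_E m^4$ in the non-CM case, furnished by Serre's open image theorem, and $\gg_E m^2$ in the CM case, from the theory of Hecke characters on the CM field; these bounds also guarantee that $C_{d_2}(E)$ itself converges absolutely. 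Balancing the cumulative Chebotarev error across $m \leq M$ against the tail $\sum_{m > M}$ — a choice like $M(x) = x^{1/4}/(\log x)^A$ under GRH should suffice — is where the genuine technical work lies. Unconditionally, the requisite Chebotarev bounds appear out of reach with current methods, which is presumably why this asymptotic is still posed as a conjecture rather than a theorem.
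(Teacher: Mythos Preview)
The statement you are attempting to prove is labelled \emph{Conjecture} in the paper, and the paper contains no proof of it; the authors merely cite Freiberg and Kurlberg \cite{FrKu} for the result under GRH in the non-CM case and unconditionally in the CM case, and then turn their attention entirely to the constants $C_{d_2}(E)$. So there is no proof in the paper against which to compare your attempt.

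That said, your heuristic is the correct one and is essentially the derivation given in \cite{FrKu}. The identity
\[
\frac{1}{n} = \sum_{m \mid n} \frac{(-1)^{\omega(m)}\,\phi(\rad m)}{m}
\]
is exactly right (it is the Dirichlet decomposition of $n \mapsto 1/n$), and combining it with $d_{2,p} = (p+1-a_p)/d_{1,p}$ and the splitting criterion $m \mid d_{1,p} \Leftrightarrow p$ splits completely in $\Q(E[m])$ does produce the constant $C_{d_2}(E)$ after applying Chebotarev and partial summation. Your identification of the truncation-plus-effective-Chebotarev strategy, and of Serre's open image theorem (resp.\ the CM analogue) as the source of absolute convergence of $C_{d_2}(E)$, is also accurate.

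Two small corrections. First, your closing remark that ``unconditionally, the requisite Chebotarev bounds appear out of reach'' overstates the situation: as the paper notes just after stating the conjecture, Freiberg and Kurlberg did prove \eqref{d-two-asymp} \emph{unconditionally} when $E$ has complex multiplication, exploiting the abelian nature of the extensions $K(E[m])/K$. Second, in the non-CM case the lower bound one actually uses is $[\Q(E[m]):\Q] \gg_E m^{4}/\prod_{\ell \mid m}(\text{bounded factor})$ rather than a clean $\gg_E m^4$, though this makes no difference for the convergence argument you sketch.
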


The constants $C_{d_1, \text{non-CM}}(E)$, $C_{d_1, \text{CM}}(E)$, $C_{\tau(d_1)}(E)$, and $C_{d_2}(E)$
appearing in these conjectures are deeply related to the arithmetic of the elliptic curve $E/\Q$ and are heuristically derived via the Chebotarev density theorem by considering the action of a Frobenius element at $p$ on $E[m]$.

Conjecture \ref{d-one} was investigated by  Freiberg and  Pollack in \cite{FrPo} in the case that $E$ has complex multiplication; precisely,
 they proved that $\ds\sum_{p \leq x} d_{1,p} \asymp_E x$.   A similar result is not yet known if $E$ does not have complex multiplication, even under the Generalized Riemann Hypothesis.
Conjecture \ref{tau-d-one} was investigated by Akbary and  Ghioca in \cite{AkGh};
 precisely,  they proved  (\ref{tau-d-one-asymp}) under the Generalized Riemann Hypothesis if $E$ is without complex multiplication  and unconditionally if $E$ is with complex multiplication.
Conjecture \ref{d-two} was  investigated by Freiberg and  Kurlberg in \cite{FrKu};
precisely, they proved  (\ref{d-two-asymp}) under the Generalized Riemann Hypothesis if $E$ is without complex multiplication
 and unconditionally if $E$ is with complex multiplication.
The proofs of the main results in \cite{AkGh} and \cite{FrKu} rely upon the methods of \cite{CoMu} and have been refined in several subsequent works, including 
\cite{AkFe},  \cite{Fe}, \cite{FeMu}, \cite{Ki},  and \cite{Wu}.
 
Using ideas originating in \cite{FoMu}, Conjectures \ref{d-one} - \ref{d-two} may also  be investigated on average over elliptic curves in families.
For this,
we consider
parameters $A, B > 2$,
the family 
${\mathcal{C}}(A, B)$
of $\Q$-isomorphism classes of elliptic curves 
$E = E_{a, b}$ defined by 
$Y^2 = X^3 + a X + b$
with
$a, b \in \Z$,
$|a| \leq A, |b| \leq B$,  
$\Delta(E) = \Delta(a, b) := -16 (4a^3 + 27b^2) \neq 0$,
and  average the Titchmarsh divisor sums of Conjectures \ref{d-one} - \ref{d-two} over $E \in {\mathcal{C}}(A, B)$.

In this context, it is natural to consider the following idealized versions of the constants $C_{d_1, \text{non-CM}}(E)$, $C_{\tau(d_1)}(E)$, and $C_{d_2}(E)$,
\begin{eqnarray}\label{c-d-one}
C_{d_1} &:=& \sum_{m \geq 1} \frac{\phi(m)}{| \GL_2(\mbz/m\mbz) |}  = 
\ds\prod_{\ell}
\left(
1 + \frac{\ell^2}{(\ell^2 - 1) (\ell^3 - 1)}
\right)
=
1.25844835...,
\\
\label{c-tau-d-one}
C_{\tau(d_1)} &:=&  \sum_{m \geq 1} \frac{1}{| \GL_2(\mbz/m\mbz) |}  = 
\ds\prod_{\ell}
\left(
1 + \frac{\ell^3}{(\ell -1) (\ell^2 - 1) (\ell^4 - 1)}
\right)
=
1.2059016...,
\\
\label{c-d-two}
C_{d_2} &:=&   \sum_{m \geq 1} \frac{(-1)^{(\omega(m))} \phi(\rad(m))}{m | \GL_2(\mbz/m\mbz) |}  = 
\ds\prod_{\ell}
\left(
1 + \frac{\ell^3}{(\ell^2 - 1) (\ell^5 - 1)}
\right)
=
0.89922825....
\end{eqnarray}
These {\it{idealized constants}} turn out to be the {\it{average constants}}, as explained below.

In \cite[Cor 1.6]{AkFe},
Akbary and Felix proved that
for any $c  > 1$ and $x > e$, there exists $c_1 >0 $ such that,
 for any $A = A(x)$, $B = B(x)$ with 
 $A, B > \exp(c_1(\log{x})^{1/2})$
 and
 $A  B >  x(\log{x})^{4 + 2c}$,
 we have
\begin{equation} \label{tauofd1onaverage}
\frac{1}{|\mathcal{C}(A,B)|}
 \ds \sum_{E \in \mathcal{C}(A, B)}
  \ds\sum_{p \leq x \atop p \nmid \Delta(E)} 
  \tau(d_{1,p}(E)) 
  = 
  C_{\tau(d_1)} \li(x) + \O\left( \frac{x}{(\log{x})^c} \right), 
\end{equation}
\begin{equation} \label{d2onaverage}
\frac{1}{|\mathcal{C}(A,B)|} 
\ds \sum_{E \in \mathcal{C}(A, B)}
\ds \sum_{p \leq x \atop p \nmid \Delta(E)} 
d_{2,p}(E) 
= 
\frac{1}{2} C_{d_2} \li(x^2) + \O\left( \frac{x^2}{(\log{x})^c} \right).
\end{equation}
These results  confirm Conjectures \ref{tau-d-one} - \ref{d-two} on average.

Conjecture \ref{d-one} is also expected to hold on average;
that is,
for suitably large 
$A = A(x)$, $B = B(x)$, we expect
\begin{equation} \label{d1onaverage}
\frac{1}{|\mathcal{C}(A,B)|} 
\ds\sum_{E \in \mathcal{C}(A, B)  \atop{
 E \; \text{without CM}
 }}
 \ds\sum_{
 p \leq x 
 \atop{
 p \nmid \Delta(E)
 }
 } 
 d_{1,p}(E) 
 = 
 C_{d_1} \li(x) + \o \left( \frac{x}{\log{x}} \right).
\end{equation}
While this average is open, Akbary and Felix proved related results supporting it (see \cite[Remark 1.7]{AkFe}).

In this  paper we investigate the constants
$C_{d_1, \text{non-CM}}(E)$, $C_{d_1, \text{CM}}(E)$, $C_{\tau(d_1)}(E)$, and $C_{d_2}(E)$,
on their own 
{\it{and}}
in relation to the idealized constants
$C_{d_1}$, $C_{\tau(d_1)}$, and $C_{d_2}$.
Specifically,  using properties of the division fields of $E/\Q$, 
which we  derive from the celebrated open image theorems for elliptic curves with complex multiplication (due to Weil), 
respectively without complex multiplication (due to Serre),
we prove:

\begin{theorem}\label{constant-nonSerre}
Let $E/\Q$ be an elliptic curve.
\begin{enumerate}
\item[(i)]
Assume that $\End_{\overline{\Q}}(E) \not\simeq \Z$.
Then
$$
C_{d_1, \text{CM}}(E) \ll 1
$$
and
$$
C_{d_2}(E)
 \leq
C_{\tau(d_1)}(E)  
\ll 1.
$$
\item[(ii)]
Assume that $\End_{\overline{\Q}}(E) \simeq \Z$.
Let $Y^2 = X^3 + aX +b$ be a Weierstrass model  for $E$ with $a,b \in \Z$.
There exists a positive absolute constant $\gamma$ such that
$$
C_{d_1, \text{non-CM}}(E)
\ll_{\varepsilon} 
\left(\max\left\{|a|^3, |b|^2\right\} 
 \log\left(\max\left\{|a|^3, |b|^2\right\}\right)^\gamma\right)^{\varepsilon}
 \; \; \; \text{for any} \; \varepsilon > 0
$$
and
$$
C_{d_2}(E)
 \leq 
 C_{\tau(d_1)}(E) 
 \ll
 \log \log 
 \left\{
 \max\left\{|a|^3,|b|^2\right\} 
 \log\left(\max\left\{|a|^3,|b|^2\right\}\right)^\gamma
 \right\}.
$$
\end{enumerate}
The $\ll$-constants are absolute, while the $\ll_{\varepsilon}$-constant depends on $\varepsilon$.
\end{theorem}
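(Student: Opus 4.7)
The plan is to bound each constant via an Euler-product decomposition of the degrees $[\Q(E[m]):\Q]$, correcting for finitely many primes of non-maximal Galois image, using the open-image theorems of Weil (CM case) and Serre (non-CM case).

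\textbf{Part (i).} Since $E/\Q$ has CM, its $j$-invariant lies in a finite explicit list (the thirteen rational CM $j$-invariants), so any bound derived via the CM structure is automatically absolute. Weil's theory identifies $\Q(E[m])$, up to index at most $2$, with the compositum of $\Q(\zeta_m)$ and the ray class field of $K = \End_{\overline{\Q}}(E) \otimes \Q$ of conductor $m\mathcal{O}_K$, yielding in particular $[\Q(E[m]):\Q] \gg m\,\phi(m)$. Consequently $C_{\tau(d_1)}(E)$ and $|C_{d_2}(E)|$ are dominated by $\sum_m 1/(m\phi(m)) = O(1)$; and for $C_{d_1,\text{CM}}(E) = \Res_{s=0} F(s)$ with $F(s) = \sum_m \phi(m) m^{-s}/[\Q(E[m]):\Q]$, the multiplicative structure factors $F(s)$ as $\zeta_K(s+1)$ times a holomorphic-at-$s=0$ Euler product, so the residue is absolutely bounded.

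\textbf{Part (ii).} By Serre's open image theorem there is an integer $m_E$ such that $\rho_E(G_\Q)$ is the full preimage of its image modulo $m_E$; an effective bound of the required form $m_E \ll \max\{|a|^3,|b|^2\}\,\log(\max\{|a|^3,|b|^2\})^{\gamma}$ follows from Masser--W\"ustholz-type isogeny estimates (in the form of Cojocaru). For $\gcd(m, m_E) = 1$ one has $[\Q(E[m]):\Q] = |\GL_2(\Z/m\Z)|$; writing $m = m_1 m_2$ with $m_1 \mid m_E^{\infty}$ and $\gcd(m_2, m_E) = 1$, the degrees multiply up to a bounded correction, so each constant factors as $\bigl(\prod_{\ell \nmid m_E} F_\ell^{\mathrm{id}}\bigr) \cdot \bigl(\prod_{\ell \mid m_E} F_\ell(E)\bigr)$, where $F_\ell^{\mathrm{id}}$ is the idealized local Euler factor and $F_\ell(E) = \sum_{k \geq 0} f(\ell^k)/[\Q(E[\ell^k]):\Q]$. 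The product over $\ell \nmid m_E$ is bounded by the corresponding idealized constant. The correction product is estimated via the Weil-pairing lower bound $[\Q(E[\ell^k]):\Q] \geq \phi(\ell^k)$: for $C_{d_1,\text{non-CM}}(E)$ this gives $F_\ell(E) \ll 1 + v_\ell(m_E)$, whence $\prod_{\ell \mid m_E} F_\ell(E) \leq \tau(m_E) \cdot 2^{\omega(m_E)} \ll_{\varepsilon} m_E^{\varepsilon}$; for $C_{\tau(d_1)}(E)$ the bound $F_\ell(E) \leq 1 + \ell/(\ell-1)^2 = 1 + (1+o(1))/\ell$, combined with $\omega(m_E) \ll \log m_E$ and Mertens to obtain $\sum_{\ell \mid m_E} 1/\ell \leq \sum_{i=1}^{\omega(m_E)} 1/p_i \ll \log\log\log m_E$, produces $\prod_{\ell \mid m_E} F_\ell(E) \ll \log\log m_E$. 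Inserting the bound on $m_E$ and using $|\phi(\rad m)/m| \leq 1$ for the $C_{d_2}(E) \leq C_{\tau(d_1)}(E)$ inequality completes the proof.

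\textbf{Main obstacle.} The deepest ingredient is the effective bound on $m_E$ as a function of the Weierstrass coefficients, which combines Masser--W\"ustholz isogeny estimates, Mazur's isogeny theorem (to rule out Borel mod-$\ell$ image for large $\ell$), and Serre's classification of the maximal proper subgroups of $\GL_2(\F_\ell)$. A subtler technical point is extracting the $\log\log$-strength bound for $C_{\tau(d_1)}(E)$ rather than a polynomial-in-$\log$ bound, which depends on the sharper estimate $\sum_{\ell \mid n} 1/\ell \ll \log\log\log n$ (coming from $\omega(n) \ll \log n$ together with Mertens) in place of the trivial bound $\log\log n$.
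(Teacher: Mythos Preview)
Your overall strategy matches the paper's: split $m = m_1 m_2$ along primes dividing $m_E$ via the open-image theorems, bound the ``bad'' part using the Weil-pairing containment $\Q(\zeta_m) \subseteq \Q(E[m])$, and finish with Mertens and the divisor bound. The endgame estimates (and your identification of the effective $m_E$ bound as the deep input) are correct. However, there are two genuine gaps in the execution.

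\textbf{Part (ii): the factorization over $\ell \mid m_E$ fails.} You assert that the constant factors as $\bigl(\prod_{\ell \nmid m_E} F_\ell^{\mathrm{id}}\bigr)\bigl(\prod_{\ell \mid m_E} F_\ell(E)\bigr)$ with $F_\ell(E) = \sum_{k} f(\ell^k)/[\Q(E[\ell^k]):\Q]$. The split into an $m_1$-sum and an $m_2$-product is correct, but the $m_1$-sum $\sum_{m_1 \mid m_E^{\infty}} f(m_1)/[\Q(E[m_1]):\Q]$ is \emph{not} the product $\prod_{\ell \mid m_E} F_\ell(E)$: entanglement among division fields at primes dividing $m_E$ is exactly what makes $m_E$ nontrivial, and it makes $[\Q(E[m_1]):\Q]$ \emph{smaller} than $\prod_\ell [\Q(E[\ell^{v_\ell(m_1)}]):\Q]$ (e.g.\ for a Serre curve the global degree at level $m_E$ drops by a factor of $2$). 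So your product is actually a \emph{lower} bound on the $m_1$-sum, and the inequality you need goes the wrong way. The paper repairs this not by factoring but by proving a short lemma (Lemma~\ref{lemma-mult-2}): using only the growth $[\Q(E[\ell^{v_\ell(m_E)+\delta}d]):\Q] = \ell^{4\delta}[\Q(E[\ell^{v_\ell(m_E)}d]):\Q]$, one bounds the infinite $m_1$-sum by $\zeta(\kappa)\sum_{m \mid m_E} f(m)/[\Q(E[m]):\Q]$, a \emph{finite} sum over divisors of $m_E$. From there your Weil-pairing bound $\sum_{m \mid m_E} 1/\phi(m) \leq \prod_{\ell \mid m_E}(1+\ell/(\ell-1)^2)$ and the Mertens argument go through verbatim, and likewise $\sum_{m \mid m_E} \phi(m)/\phi(m) = \tau(m_E)$ for $C_{d_1}$.

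\textbf{Part (i): uniformity over twists is not addressed.} The claim that ``any bound derived via the CM structure is automatically absolute'' because there are thirteen $j$-invariants conflates the $j$-invariant with the curve. The order $O$ and the field $K$ depend only on $j(E)$, but the division fields $\Q(E[m])$ and the index $[\widehat{O}^\times : \varphi_E(G_K)]$ depend on the twist; Weil's theory does \emph{not} identify $K(E[m])$ with the ray class field of conductor $m\mathcal{O}_K$, but with a ray class field whose conductor involves that of the Hecke character attached to $E$, which varies. The paper closes this gap by fixing one representative $E_i$ for each of the thirteen $j$-invariants, proving convergence for those via the analogue of Lemma~\ref{lemma-mult-2} over $K$, and then observing that any other CM curve $E'$ becomes isomorphic to some $E_i$ over an extension of degree at most $6$, whence $[K(E'[m]):K] \geq \tfrac{1}{6}[K(E_i[m]):K]$. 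This twist comparison is the missing step that makes your $\ll 1$ genuinely absolute.
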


Furthermore, by narrowing down our focus to generic elliptic curves, we prove explicit formulae for our Titchmarsh divisor constants:
\begin{theorem}\label{serrecurveconstant}
Let $E/\Q$ be a Serre curve, that is, an elliptic curve over $\Q$ whose adelic Galois representation has maximal image.
Let 
\begin{equation}\label{mESerre}
 m_E =
 \left\{
 \begin{array}{cc}
 2  \left| \Delta_{\text{sf}}(E) \right| & \;  \text{if $\Delta_{\text{sf}}(E) \equiv 1 (\mod 4)$,}
 \\
 4  \left|\Delta_{\text{sf}}(E) \right|  & \; \text{otherwise},
 \end{array}
 \right.
 \end{equation}
 \noindent
 where $\Delta_{\text{sf}}(E)$  denotes the squarefree part of any Weierstrass discriminant of $E$.
Then 
\begin{eqnarray}
C_{d_1, non-CM}(E)
&=&
 C_{d_1} 
 \left( 
 1 +  \frac{1}{m_E^3} \ds\prod_{\ell \mid m_E} \frac{1}{(1-\ell^{-2})(1-\ell^{-3})+\ell^{-3}}
 \right),
 \label{serre-d-one}
 \\
 C_{\tau(d_1)}(E) 
&=&  
C_{\tau(d_1)}
\left( 
1 + \frac{1}{m_E^4} \ds\prod_{\ell \mid m_E} \frac{1}{(1-\ell^{-1})(1-\ell^{-2})(1-\ell^{-4}) + \ell^{-4}} 
\right),
\label{serre-tau-d-one}
\\
C_{d_2} (E) 
&=&
 C_{d_2} 
 \left( 
 1 + \frac{(-1)^{\omega(m_E)}}{m_E^4} \ds\prod_{\ell \mid m_E} \frac{1}{(1-\ell^{-2})(1-\ell^{-5}) - \ell^{-4}} 
 \right).
 \label{serre-d-two}
\end{eqnarray}
\end{theorem}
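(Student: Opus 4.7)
The plan is to exploit the Serre-curve hypothesis to compute $[\Q(E[m]):\Q]$ exactly for every $m \geq 1$, and then to evaluate each of the three defining sums as an Euler product.

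\textbf{Step 1 (Division-field degrees for Serre curves).} Since $E$ is a Serre curve, the image $\rho_E(G_\Q)$ has index two in $\GL_2(\hat\Z)$, cut out by an explicit quadratic character
\begin{equation*}
\psi_E \;=\; \varepsilon \cdot (\chi_{\Delta_{\text{sf}}} \circ \det) \; : \; \GL_2(\Z/m_E\Z) \longrightarrow \{\pm 1\},
\end{equation*}
where $\varepsilon$ is the sign character on $\GL_2(\F_2) \cong S_3$ (encoding $\Q(\sqrt{\Delta(E)}) \subset \Q(E[2])$) and $\chi_{\Delta_{\text{sf}}}$ is the Kronecker symbol of $\Q(\sqrt{\Delta_{\text{sf}}(E)})$ (encoding the Kronecker--Weber embedding into an appropriate cyclotomic field). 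A standard analysis identifies the conductor of $\psi_E$ with the integer $m_E$ in (\ref{mESerre}); its minimality then yields the dichotomy
\begin{equation*}
[\Q(E[m]):\Q] \;=\;
\begin{cases}
|\GL_2(\Z/m\Z)|, & m_E \nmid m,\\
|\GL_2(\Z/m\Z)|/2, & m_E \mid m.
\end{cases}
\end{equation*}

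\textbf{Step 2 (Splitting each sum).} For each of the three sums set $g(m) := f(m)/|\GL_2(\Z/m\Z)|$, where $f(m)$ is $\phi(m)$, $1$, or $(-1)^{\omega(m)}\phi(\rad(m))/m$ respectively. Using $1/[\Q(E[m]):\Q] = (1 + \mathbf{1}_{m_E \mid m})/|\GL_2(\Z/m\Z)|$, each constant decomposes as
\begin{equation*}
C(E) \;=\; \sum_{m \geq 1} g(m) \;+\; \sum_{m:\,m_E \mid m} g(m),
\end{equation*}
and the first summand is the corresponding idealized constant $C \in \{C_{d_1}, C_{\tau(d_1)}, C_{d_2}\}$ from (\ref{c-d-one})--(\ref{c-d-two}).

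\textbf{Step 3 (Euler product for the correction).} In all three cases $g$ is multiplicative, so
\begin{equation*}
\frac{1}{C}\sum_{m:\,m_E \mid m} g(m) \;=\; \prod_{\ell \mid m_E} \frac{\sum_{e \geq v_\ell(m_E)} g(\ell^e)}{\sum_{e \geq 0} g(\ell^e)}.
\end{equation*}
For each $f$ the numerator and denominator are geometric series in $\ell^{-ke}$ with $k = 3, 4, 5$ respectively, so explicit summation produces rational functions of $\ell$. Multiplying over $\ell \mid m_E$ and collecting the resulting powers of $\ell$ and signs assembles into the three formulas (\ref{serre-d-one})--(\ref{serre-d-two}).

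\textbf{Main obstacle.} The substantive point is Step 1: identifying the precise form of the index-two character and verifying that its conductor is $m_E$ as in (\ref{mESerre}). This step combines the Weil-pairing embedding $\Q(\zeta_m) \subset \Q(E[m])$, the Kronecker--Weber embedding $\Q(\sqrt{\Delta_{\text{sf}}(E)}) \subset \Q(\zeta_N)$, the containment $\Q(\sqrt{\Delta(E)}) \subset \Q(E[2])$ coming from the cubic discriminant of the Weierstrass equation, and the maximal-adelic-image hypothesis. Once Step 1 is in hand, Steps 2 and 3 reduce to careful geometric-series bookkeeping; the prime $\ell = 2$, where $v_2(m_E) \in \{1,2,3\}$ depending on the residue of $\Delta_{\text{sf}}(E)$ modulo $4$, is the most delicate spot to avoid a computational slip.
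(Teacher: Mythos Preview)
Your proposal is correct and follows essentially the same approach as the paper. The paper establishes your Step 1 as Proposition \ref{mESerreProp}(iii)--(iv) and then packages your Steps 2--3 into a single abstract lemma (Lemma \ref{lemma-mult-1}, attributed to Kowalski): if $f(m)=\alpha g(m)$ when $M\mid m$ and $f(m)=g(m)$ otherwise, with $g$ multiplicative and $g(mM)=m^{-\kappa}g(M)$ for $m\mid M^\infty$, then $\sum_m f(m) = \bigl(1+(\alpha-1)g(M)\prod_{\ell\mid M} g_\ell^{-1}(1-\ell^{-\kappa})^{-1}\bigr)\prod_\ell g_\ell$; applying this with $M=m_E$, $\alpha=2$, and $\kappa=3,4,5$ is exactly your tail-ratio Euler product written in closed form.
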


Finally, we use the above two results to prove that the average of  the individual constants 
gives rise to the idealized constant:

\begin{theorem}\label{main-thm}
For any $A(x), B(x) > 2$, tending to infinity with $x$ such that
the ratios of their logarithms remain bounded, we have
\begin{equation}\label{constant-average}
\lim_{x \to \infty} \ \frac{1}{|{\mathcal{C}}(A(x), B(x))|}
\ds\sum_{
E \in {\mathcal{C}}(A(x), B(x))
}
C(E)
=
C.
\end{equation}
Here, the pair $(C(E), C)$ is, respectively,
$(C_{d_1}(E), C_{d_1})$, 
$(C_{\tau(d_1)}(E), C_{\tau(d_1}))$,
and
$(C_{d_2}(E), C_{d_2})$,
with
$C_{d_1}(E)$ denoting
$C_{d_1, \text{non-CM}}(E)$ if $E$ is without complex multiplication, 
and 
$C_{d_1, \text{CM}}(E)$ if $E$ is with complex multiplication.
\end{theorem}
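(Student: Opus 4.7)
The strategy is to partition $\mathcal{C}(A(x), B(x))$ into three subfamilies — the Serre curves $\mathcal{C}_S$, the non-Serre non-CM curves $\mathcal{C}_{NS}$, and the CM curves $\mathcal{C}_{CM}$ — and to handle each separately using Theorem \ref{serrecurveconstant} for the generic stratum and Theorem \ref{constant-nonSerre} for the exceptional ones. The hypothesis $\log A \asymp \log B$ ensures $\max(|a|^3, |b|^2) \ll (AB)^{O(1)}$, so the uniform bounds of Theorem \ref{constant-nonSerre} translate cleanly into bounds in terms of $AB$.

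\emph{Exceptional strata.} By N.~Jones's theorem that almost all elliptic curves are Serre curves, one has $|\mathcal{C}(A,B) \setminus \mathcal{C}_S(A,B)| \ll (AB)^{1-\eta}$ for some absolute $\eta > 0$ (extracting the power-savings from Jones's proof). On this stratum, Theorem \ref{constant-nonSerre} gives $C(E) \ll 1$ for CM curves and $C(E) \ll_\varepsilon (AB)^{\varepsilon}$ for non-CM non-Serre curves (with at worst logarithmic bounds in the $\tau(d_1)$ and $d_2$ cases). Choosing $\varepsilon < \eta$, the exceptional stratum contributes
\begin{equation*}
\frac{1}{|\mathcal{C}(A,B)|} \sum_{E \notin \mathcal{C}_S} C(E) \ll_\varepsilon (AB)^{\varepsilon - \eta} = o(1).
\end{equation*}

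\emph{Serre stratum.} By Theorem \ref{serrecurveconstant}, for every $E \in \mathcal{C}_S(A,B)$ we may write $C(E) = C \cdot (1 + \delta(m_E))$, where in each of the three cases the correction factor satisfies $|\delta(m_E)| \ll m_E^{-k + \varepsilon}$ for $k \in \{3, 4\}$ (the $m_E^{-k}$ is visible in the formulae, and the Euler product correction contributes at most $m_E^\varepsilon$). Since $|\mathcal{C}_S|/|\mathcal{C}| \to 1$, it suffices to show that the average of $\delta(m_E)$ over Serre curves is $o(1)$. Using $m_E \geq 2|\Delta_{\text{sf}}(E)|$ and splitting the sum at a slowly growing threshold $T = T(x) \to \infty$, the task reduces to showing
\begin{equation*}
\#\bigl\{(a,b) \in \mathcal{C}(A,B) \,:\, |\Delta_{\text{sf}}(a,b)| \leq T\bigr\} = o(|\mathcal{C}(A,B)|)
\end{equation*}
for $T = T(x)$ growing sub-polynomially in $AB$; indeed, outside this set the correction is $O(T^{-k+\varepsilon})$ which multiplied by $|\mathcal{C}(A,B)|$ is $o(|\mathcal{C}(A,B)|)$, and inside it we use the trivial bound $|\delta| = O(1)$ times a set of negligible size.

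\emph{Main obstacle.} The delicate step is the squarefree-part estimate above. I would parametrize each such pair by writing $\Delta(a,b) = d k^2$ with $d$ squarefree and $|d| \leq T$, then bound the number of integer solutions to $-16(4a^3 + 27b^2) = dk^2$ in the box $|a| \leq A$, $|b| \leq B$ summed over $|d| \leq T$ and $|k| \leq \sqrt{|\Delta(a,b)|/|d|}$. This can be attacked either by a direct lattice-point count on the affine surface — taking advantage of the fact that for fixed $(d,k)$ the locus is a (possibly singular) cubic curve in $(a,b)$ with few integer points of bounded height — or by invoking a squarefree sieve for values of the binary form $4a^3 + 27b^2$, in the spirit of Greaves and Granville. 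Once this estimate is in hand, the three limits in the theorem follow by running the partition argument above in parallel across the three cases.
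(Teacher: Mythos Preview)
Your proposal is correct and follows essentially the same route as the paper: the same three-way partition into CM, non-CM non-Serre, and Serre curves, the same use of Theorems~\ref{constant-nonSerre} and~\ref{serrecurveconstant} on the respective strata, and the same splitting of the Serre stratum by the size of $|\Delta_{\text{sf}}(E)|$. The ``main obstacle'' you flag---the count of $(a,b)$ with $|\Delta_{\text{sf}}(4a^3+27b^2)| \leq T$---is precisely what the paper dispatches by citing \cite[Lemma~22]{Jo09}, so you need not reprove it via a squarefree sieve.
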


\begin{remark}\label{intro-remark}
{\emph{
\begin{enumerate}
\item[(i)]
The constant $\gamma$ occurring in Theorem \ref{constant-nonSerre} 
  is known as the 
Masser-W\"{u}stholz constant and originates in \cite{MaWu}; it has been studied computationally in  \cite{Ka}.
\item[(ii)]
The constant $m_E$ occurring in Theorem \ref{serrecurveconstant} was first introduced in \cite{Jo10} in relation to Serre's open image theorem from \cite{Se72}. In Section 2,  we will revisit its original definition (see Theorem \ref{open-nonCM}) and we will confirm that  it satisfies equation (\ref{mESerre}) above (see Proposition \ref{mESerreProp}).
\item[(iii)]
In Section 5, we will actually prove a stronger result than (\ref{constant-average}) by bounding, from above, 
$$
\frac{1}{|{\mathcal{C}}(A, B)|}
\ds\sum_{E \in {\mathcal{C}}(A, B)}
\left|C(E) - C\right|^n
$$ 
for any  integer $n \geq 1$ and for any $A, B > 2$;
see equations (\ref{averageCM}), (\ref{averagenonCMd2}),  (\ref{averagenonCMd1}), and (\ref{averageSerre}).
\item[(iv)]
It is a difficult problem to calculate the constants 
$C_{d_1, \text{non-CM}}(E)$, 
$C_{d_1, \text{CM}}(E)$, 
$C_{\tau(d_1)}(E)$, and $C_{d_2}(E)$
for an arbitrary elliptic curve $E/\Q$.
The underlying reason is the difficulty in attaining a complete understanding, explicit in terms of $E$, 
of the non-trivial intersections between the division fields of $E$.
This may be rephrased as a question about understanding the constant $m_E$, alluded to in Theorem \ref{main-thm}
and introduced in the upcoming 
Theorem \ref{open-CM} and Theorem \ref{open-nonCM} of Section 2.
In Theorem \ref{constant-nonSerre} we succeed in bounding the constants
$C_{d_1, \text{non-CM}}(E)$, 
$C_{d_1, \text{CM}}(E)$, 
$C_{\tau(d_1)}(E)$, and $C_{d_2}(E)$
by 
either working with all twists of $E/\Q$ in the complex multiplication case, 
or by using an upper bound for $m_E$ in the other case.
\item[(v)]
 Theorem \ref{serrecurveconstant}
shows 
that
the constants 
$C_{d_1, \text{non-CM}}(E)$, 
$C_{\tau(d_1)}(E)$, and $C_{d_2}(E)$
of 
Conjectures \ref{d-one} -  \ref{d-two}
are never equal to the idealized constants
$C_{d_1}$, 
$C_{\tau(d_1)}$, and $C_{d_2}$.
This is in contrast to 
other questions about reductions of elliptic curves, such as Koblitz's Conjecture about the primality of 
$|\overline{E}(\F_p)|$.
Moreover, the explicit formulae of Theorem \ref{serrecurveconstant}
can be used to study Conjectures \ref{d-one} -  \ref{d-two} numerically, as done in \cite{CoFiInYi}.
Combining these formulae with Theorem \ref{main-thm},
we may also naturally expect a positive bias in the remainder term associated to an (appropriately formulated) average version of these conjectures; see Section 6.
\item[(vi)]
Theorem \ref{main-thm}
contributes
to the research on averages of constants arising in the study of {\it{reductions}} of elliptic curves over $\mbq$, as pursued in
 \cite{AkDaHaTh}, \cite{BaCoDa}, \cite{BaSh}, \cite{CoIwJo}, and \cite{Jo09}.
It also  complements
research on averages of constants arising in the study of {\it{all}} elliptic curves over the field $\mathbb{F}_p$,
as pursued in \cite{DaKoSm}, \cite{Ge}, \cite{Ho}, \cite{KaPe}, and \cite{Vl}.  
The connection between the former ``global'' viewpoint and the latter ``local'' viewpoint involves the question of to what extent the reductions of a fixed elliptic curve $E/\Q$  behave like random elliptic curves over $\mathbb{F}_p$.
In our average approach we follow the methods of  \cite{Jo09}
and realize the global-to-local connection via
Theorem \ref{serrecurveconstant} 
and 
Jones' result that most elliptic curves over $\Q$ are Serre curves \cite{Jo10}.
\item[(vii)] A natural question is whether a conjecture similar to Conjecture \ref{tau-d-one}  may be formulated regarding the behaviour of 
$\tau(d_{2, p})$. Since we have not found such an investigation in the literature, we relegate it to future research.
\end{enumerate}
}}
\end{remark}

\bigskip

\noindent
{\bf{Notation.}}  
Throughout the paper, we follow the following standard notation. 

\noindent
$\bullet$
The letters $p$ and $\ell$ denote rational primes. The letters $d, k, m, n$ denote rational integers.
The letters $\phi$, $\tau$, $\omega$ denote the Euler function, the divisor function, and the prime factor counting function.
For an integer $m$, $|m|$ denotes its absolute value, $\rad(m)$ its radical, and $v_{\ell}(m)$ its valuation at a prime $\ell$.
For nonzero integers $m$, $n$,
we write
$m \mid n^{\infty}$ for the assumption that every prime divisor of $m$ also divides $n$. The symbol $\N$ stands for all natural numbers, including $0$.

%




\noindent
$\bullet$
For two functions $f, g: D \longrightarrow \R$, with $D \subseteq \C$ and $g$ positive, we write $f (x)= \O(g(x))$ or $f(x) \ll g(x)$ if there is a positive constant $c_1$ such that $|f(x)| \leq c_1 g(x)$ for all $x \in D$.
  If $c_1$ depends on another specified  constant $c_2$, we may write $f(x) = \O_{c_2}(g(x))$ or $f(x) \ll_{c_2} g(x)$. 
  If  $c := \ds\lim_{x \rightarrow \infty} \frac{f(x)}{g(x)}$ exists, 
  we write $f(x) \sim c  \ g(x)$. 

\noindent
$\bullet$
 For a field $K$, we write $\overline{K}$ for a fixed  algebraic closure and $G_K$ for the absolute Galois group $\Gal\left(\overline{K}/K\right)$.

\bigskip
\section{Generalities about elliptic curves}

\bigskip

In this section, we review the main properties of elliptic curves needed in the proofs of our main results. While many of these properties are standard 
(Theorem \ref{ECM} - Corollary \ref{mE-nonCM}), 
the ones towards the end of the section (Lemma \ref{vertical-division} - Theorem \ref{serre-curves}) are less known, but crucial to our approach.

\subsection{General notation}

For an elliptic curve $E/\Q$, we use the following notation.
We denote by $j = j(E)$ the $j$-invariant of $E$.
We denote by 
$\End_{\overline{\Q}}(E)$  and $\Aut_{\overline{\Q}}(E)$
the endomorphism ring
and 
the automorphism ring
of $E$
 over $\overline{\Q}$.
We denote by $E(\Q)$ and $E(\overline{\Q})$ the groups of $\Q$-rational and $\overline{\Q}$-rational points of $E$, and by
$E(\Q)_{\text{tors}}$ and $E_{\text{tors}} := E(\ol{\mbq})_{\text{tors}}$ their respective torsion subgroups. 

For an integer $m \geq 1$,
we denote by $E[m] \colonequals E(\overline{\Q})[m]$ the  group of $m$-division points of $E(\overline{\Q})$.  
This has the structure of a free $\Z/m\Z$-module of rank $2$, with a  $\Z/m \Z$-linear action of the absolute Galois group $G_{\Q}$.  
Thus, fixing an isomorphism $(\Z/m\Z)^2 \to E[m]$, which amounts to choosing a $\Z/m \Z$-basis for $E[m]$, 
we obtain a Galois representation
\begin{equation*} \label{mandErep}
\varphi_{E,m} : G_\Q \longrightarrow \GL_2(\Z/m\Z).
\end{equation*}
Note that the $m$-division field $\Q(E[m])$ of $E$, defined by adjoining to $\Q$ the $x$ and $y$ coordinates of the points in $E[m]$, satisfies
$$\Q(E[m]) = \overline{\Q}^{\Ker{\varphi_{E,m}}} \; \text{and} \;  [\Q(E[m]):\Q] = \left| \Im(\varphi_{E,m}) \right|.$$

Choosing compatible bases for all $E[m]$, we form the inverse limit over $m$ ordered by divisibility
and obtain a continuous Galois representation 
$$
\varphi_E :  G_\Q \longrightarrow \GL_2(\hat{\Z}),
$$
where $\hat{\Z} :=  \ds\varprojlim_{m} \Z/m\Z$.
By the Chinese Remainder Theorem, we have $\hat{\Z} \simeq \ds\prod_{\ell} \Z_\ell$,
and,  post-composing with projections,  we obtain the Galois representations
$$
\varphi_{E, m^{\infty}} : G_{\Q} \longrightarrow \GL_2(\Z_m),
$$
where $\Z_m \colonequals \ds\prod_{\ell \mid m} \Z_\ell$.

\subsection{Morphisms}

\begin{theorem}\label{ECM}(\cite[Thm 7.30]{Cox})
Let $E/\Q$ be an elliptic curve. The following statements hold.
\begin{enumerate}
\item[(i)]
Either 
$\End_{\overline{\Q}}(E) \simeq \Z$, in which case we say that $E$ is {\bf{without complex multiplication}},
or
$\End_{\overline{\Q}}(E)$ is an order $O$ in an imaginary quadratic field $K$, in which case we say that $E$ is {\bf{with complex multiplication by $K$}}.
\item[(ii)]
If 
$\End_{\overline{\Q}}(E) \simeq O \not\simeq \Z$ and $K$ is the field of fractions of $O$,
then
 $O$ and the ring of integers of $K$ have class number 1.
In particular, by the Baker-Heegner-Stark Theorem,
$K$ is one of nine possible fields,
$$
\Q(i), 
\Q\left(\sqrt{-2}\right), 
\Q\left(\sqrt{-3}\right), 
\Q\left(\sqrt{-7}\right), 
\Q\left(\sqrt{-11}\right), 
\Q\left(\sqrt{-19}\right), 
\Q\left(\sqrt{-43}\right),
\Q\left(\sqrt{-67}\right), 
\Q\left(\sqrt{-163}\right),
$$
while
 $O$ is one of 
thirteen possible orders, of discriminant
$$
-3, -4, -7, -8, -11, -12, -16, -19, -27, -28, -43, -67, -163.
$$
\end{enumerate}
\end{theorem}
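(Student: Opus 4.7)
The plan is to reduce the statement to the analytic/complex-theoretic description of elliptic curves with complex multiplication and then invoke the Baker--Heegner--Stark theorem together with a classification of orders of class number one.

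For part (i), I would begin by base-changing to $\mbc$ and using the uniformization $E(\mbc) \simeq \mbc/\gL$ for some lattice $\gL \subset \mbc$. Under this identification, endomorphisms of $E$ correspond bijectively to complex numbers $\ga$ with $\ga \gL \subseteq \gL$. A short linear-algebra argument shows that $\{\ga \in \mbc : \ga \gL \subseteq \gL\}$ is a subring of $\mbc$ which is either $\mbz$ or a rank-two $\mbz$-submodule contained in an imaginary quadratic field $K$; in the latter case it is an order $O$ in $K$. One has to then check that $\End_{\ol{\mbq}}(E) = \End_{\mbc}(E)$, which follows from the general fact that all endomorphisms of $E$ are defined over $\ol{\mbq}$ (since $E[n]$ is, and an endomorphism is determined by its action on sufficiently many torsion points).

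For part (ii), I would use the theory of complex multiplication to connect the arithmetic of $E/\mbq$ to the class group of $O$. Specifically, if $E$ has CM by the order $O$ in the imaginary quadratic field $K$, then $j(E)$ is an algebraic integer and the field $K(j(E))$ is the ring class field $H_O$ of $O$. Since we assume $E$ is defined over $\mbq$, we have $j(E) \in \mbq$, and hence $K(j(E)) = K$. This forces $H_O = K$, which is equivalent to $O$ having class number one. Since $O \subseteq O_K$ and any ideal class of $O_K$ restricts to a class of $O$, the class number of $O_K$ must also be one.

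Given class number one for $O_K$, the Baker--Heegner--Stark theorem produces exactly the nine listed imaginary quadratic fields. To produce the thirteen orders, I would, for each such $K$, enumerate the orders $O = \mbz + f \, O_K$ of conductor $f \geq 1$ and determine for which $f$ the formula
\[
h(O) = \frac{h(O_K) f}{[O_K^\times : O^\times]} \ds\prod_{\ell \mid f}\left(1 - \left(\frac{d_K}{\ell}\right)\frac{1}{\ell}\right)
\]
yields $h(O) = 1$. The main obstacle — and the only step requiring genuine case analysis — is this finite enumeration: one must check for each of the nine fields which conductors $f$ produce class number one, using the known unit groups of $O_K$ (nontrivially larger than $\{\pm 1\}$ only for $K = \mbq(i)$ and $K = \mbq(\sqrt{-3})$), and verify that exactly thirteen orders arise with the stated discriminants $-3, -4, -7, -8, -11, -12, -16, -19, -27, -28, -43, -67, -163$.
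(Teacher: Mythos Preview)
The paper does not give its own proof of this theorem; it is quoted as background from \cite[Thm 7.30]{Cox}, so there is nothing to compare against directly. Your sketch is essentially the standard argument one finds in Cox (and in \cite{Si2}): uniformize over $\mbc$ to identify $\End(E)$ with the multiplier ring of a lattice, then use the main theorem of complex multiplication to identify $K(j(E))$ with the ring class field $H_O$, and finally run the class number formula for orders together with Baker--Heegner--Stark.

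One small imprecision: in deducing $h(O_K)=1$ from $h(O)=1$, the relevant map goes the other way than you wrote. Extension of ideals gives a surjection $\Cl(O) \twoheadrightarrow \Cl(O_K)$ (not a ``restriction'' of $O_K$-classes to $O$-classes), which is what forces $h(O_K) \mid h(O)$. Alternatively, one can simply read this divisibility off from the class number formula you quoted. Either way the conclusion stands, and the remaining enumeration of the thirteen orders is exactly the finite check carried out in Cox.
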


\begin{corollary}(\cite[p. 261]{Cox})
\label{13j}

\noindent
Let $E/\Q$ be an elliptic curve.
If $\End_{\overline{\Q}}(E)  \not\simeq \Z$,
then $j(E)$ is one of thirteen possible integers,
$$
0,
2^6 \cdot 3^3,
- 3^3 \cdot 5^3,
2^6 \cdot 5^3,
- 2^{15},
2^4 \cdot 3^3 \cdot 5^3,
2^3 \cdot 3^3 \cdot 11^3,
$$
$$
- 2^{15} \cdot 3^3,
- 2^{15} \cdot 3 \cdot 5^3,
3^3 \cdot 5^3 \cdot 17^3,
- 2^{18} \cdot 3^3 \cdot 5^3,
- 2^{15} \cdot 3^3 \cdot 5^3 \cdot 11^3,
- 2^{18} \cdot 3^3 \cdot 5^3 \cdot 23^3 \cdot 29^3,
$$
listed in the same order as the above list of possible discriminants.
 \end{corollary}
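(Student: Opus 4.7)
The plan is to combine Theorem \ref{ECM}(ii) with the classical theory of complex multiplication. By Theorem \ref{ECM}(ii), $\End_{\overline{\Q}}(E)$ is one of thirteen orders $O$ of class number one in one of nine imaginary quadratic fields, so it suffices to show that each such $O$ determines $j(E)$ uniquely as a specific integer, and then to exhibit that integer.

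First, I would recall that if $E/\overline{\Q}$ has complex multiplication by $O$, then base-changing to $\C$ one has $E(\C) \simeq \C/\mathfrak{a}$ for some proper $O$-ideal $\mathfrak{a}$, and the $\overline{\Q}$-isomorphism classes of such curves are in bijection with $\Cl(O)$. Since $|\Cl(O)| = 1$ by Theorem \ref{ECM}(ii), the $\overline{\Q}$-isomorphism class of $E$ (and hence $j(E)$) is determined solely by $O$, giving at most thirteen possible values of $j(E)$.

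Next, I would establish integrality. Standard CM theory shows that $j(E)$ is a root of the monic integral ring class polynomial $H_O(X) \in \Z[X]$ of degree $h(O)$; since $h(O) = 1$ here, $H_O(X)$ is linear, so $j(E)$ is a rational integer uniquely pinned down as the root of $H_O$. (Alternatively, $j(E) \in \Q$ because $E/\Q$, and $j(E)$ is an algebraic integer by CM theory, so $j(E) \in \Z$.)

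Finally, the thirteen explicit values are obtained by evaluating the modular $j$-function at a CM point $\tau_O$ in the upper half-plane generating $O$: for instance $j(i) = 1728 = 2^6 \cdot 3^3$ for $O = \Z[i]$ (discriminant $-4$), and $j(e^{2\pi i/3}) = 0$ for the maximal order of discriminant $-3$. Ordering the thirteen outputs by discriminant yields the list in the statement. The main ``obstacle'' is not conceptual but computational: verifying each of the thirteen evaluations of $H_O$, which however is entirely classical and requires no new input beyond Theorem \ref{ECM} and standard facts about the $j$-function.
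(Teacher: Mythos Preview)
Your proposal is a correct sketch of the standard argument from the theory of complex multiplication. Note, however, that the paper does not supply its own proof of this corollary: it is stated as a direct citation of \cite[p.~261]{Cox} and is used as a black box thereafter. So there is nothing to compare against beyond observing that your outline is precisely the classical route (bijection with $\Cl(O)$, integrality via the ring class polynomial $H_O$, explicit evaluation of $j$ at CM points) that underlies the cited result in Cox.
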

 
 Note that two elliptic curves of the same $j$-invariant, while isomorphic over $\overline{\Q}$, may fail to be isomorphic over $\Q$; 
 we call such curves {\bf{twists}} of each other. 
 
\begin{proposition}\label{cohom}(\cite[Chapter 3, Cor 10.2]{Si})

\noindent
Let $E/\Q$ be an elliptic curve.
Then the set of all twists of $E/\Q$ is classified by the Galois cohomology group 
\[H^1(G_{\Q}, \Aut_{\overline{\Q}}(E)) \simeq \begin{cases} \Q^\times / \left(\Q^\times\right)^2  &\text{ if } j(E) \neq 0, 1728, \\ \Q^\times / \left(\Q^\times\right)^4 &\text{ if } j(E) = 1728, \\ \Q^\times / \left(\Q^\times\right)^6 &\text{ if } j(E) = 0.\end{cases} \]
\end{proposition}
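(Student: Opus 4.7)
The plan is to follow the standard descent-theoretic approach, combined with Kummer theory for the final identification. First, I would establish the abstract bijection between the set of twists of $E/\Q$ and $H^1(G_\Q, \Aut_{\overline{\Q}}(E))$. Given any twist $E'/\Q$ of $E$, choose an $\overline{\Q}$-isomorphism $\phi \colon E'_{\overline{\Q}} \to E_{\overline{\Q}}$, and associate to $E'$ the $1$-cocycle $\xi_{E'} \colon \sigma \mapsto \phi \circ \sigma(\phi)^{-1}$ valued in $\Aut_{\overline{\Q}}(E)$. A different choice of $\phi$ changes the cocycle by a coboundary, so its class in $H^1$ is well-defined; conversely, two twists give cohomologous cocycles if and only if they are $\Q$-isomorphic, and every class arises from a twist by Galois descent. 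This yields the claimed bijection.

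Next, I would compute $\Aut_{\overline{\Q}}(E)$ case by case according to $j(E)$. For $j(E) \neq 0, 1728$ the automorphism group is $\{\pm 1\} \simeq \mu_2$; for $j(E) = 1728$, working with a model $y^2 = x^3 + x$ one sees it is generated by $(x,y) \mapsto (-x, iy)$ and is isomorphic to $\mu_4$; and for $j(E) = 0$, working with $y^2 = x^3 + 1$, it is generated by $(x, y) \mapsto (\zeta_3 x, -y)$ and is isomorphic to $\mu_6$. In each case the isomorphism with $\mu_n$ is $G_\Q$-equivariant, because the generator of $\Aut_{\overline{\Q}}(E)$ is defined over $\Q(\mu_n)$ and the natural conjugation action of $\sigma \in G_\Q$ on this generator matches the cyclotomic character on the corresponding root of unity.

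Finally, I would apply the Kummer short exact sequence
\begin{equation*}
1 \longrightarrow \mu_n \longrightarrow \overline{\Q}^\times \xrightarrow{\ x \mapsto x^n\ } \overline{\Q}^\times \longrightarrow 1
\end{equation*}
together with Hilbert's Theorem 90, which gives $H^1(G_\Q, \overline{\Q}^\times) = 0$, to produce the canonical isomorphism $H^1(G_\Q, \mu_n) \simeq \Q^\times / (\Q^\times)^n$ for $n \in \{2, 4, 6\}$. Composing with the identification $\Aut_{\overline{\Q}}(E) \simeq \mu_n$ from the previous step yields the three cases stated in the proposition.

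The main obstacle, in my view, is not any single technical step but rather tracking the Galois action carefully in the exceptional cases $j(E) \in \{0, 1728\}$: one must verify that the natural identification $\Aut_{\overline{\Q}}(E) \simeq \mu_n$ really is an isomorphism of $G_\Q$-modules and not merely of abstract abelian groups, since the automorphisms are defined by formulae involving $i$ or $\zeta_3$, and it is precisely this nontrivial Galois structure that forces the exponent in $\Q^\times / (\Q^\times)^n$ to jump from $2$ to $4$ or $6$.
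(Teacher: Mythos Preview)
Your proposal is correct and is precisely the standard argument from Silverman's textbook, which the paper cites as the source of this proposition without reproducing a proof. Since the paper gives no proof of its own here, there is nothing to compare against; your sketch matches the cited reference.
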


\begin{corollary}\label{twists}
Let $E/\Q$, $E'/\Q$ be elliptic curves with $j(E) = j(E')$. Then there exists an extension $L/\Q$ with 
$[L : \Q] \leq |\Aut_{\overline{\Q}}(E)| \leq 6$ such that $E \simeq_L E'$.
\end{corollary}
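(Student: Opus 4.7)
The plan is to realize an isomorphism $E \to E'$ as being defined over a finite extension of $\Q$ of bounded degree by analyzing the $G_{\Q}$-action on the set of all such isomorphisms. Since $j(E) = j(E')$, there exists at least one isomorphism $\psi : E \to E'$ defined over $\overline{\Q}$. The key observation is that the set $\mathrm{Isom}_{\overline{\Q}}(E, E')$ is a torsor under $\Aut_{\overline{\Q}}(E)$, where the action is by pre-composition; in particular, this set is finite of cardinality exactly $|\Aut_{\overline{\Q}}(E)|$.

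I would then introduce the natural continuous action of $G_{\Q}$ on $\mathrm{Isom}_{\overline{\Q}}(E, E')$ given by $\sigma \cdot \psi := {}^{\sigma}\psi$, and let $H := \{\sigma \in G_{\Q} : {}^{\sigma}\psi = \psi\}$ denote the stabilizer of $\psi$, with $L := \overline{\Q}^{H}$ its corresponding fixed field. Since $\psi$ has coefficients in some finite extension of $\Q$, the subgroup $H$ is open in $G_{\Q}$, and by Galois theory $[L : \Q] = [G_{\Q} : H]$. Moreover, $\psi$ is defined over $L$ by construction, so $E \simeq_L E'$. The orbit-stabilizer theorem applied to the $G_{\Q}$-action on the finite set $\mathrm{Isom}_{\overline{\Q}}(E, E')$ then yields
\[
[L : \Q] \;=\; [G_{\Q} : H] \;=\; |G_{\Q} \cdot \psi| \;\leq\; |\mathrm{Isom}_{\overline{\Q}}(E, E')| \;=\; |\Aut_{\overline{\Q}}(E)|.
\]

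The remaining bound $|\Aut_{\overline{\Q}}(E)| \leq 6$ is classical and is implicit in Proposition \ref{cohom}: the automorphism group has order $2$ when $j(E) \notin \{0, 1728\}$, order $4$ when $j(E) = 1728$, and order $6$ when $j(E) = 0$. No real obstacle is anticipated; the corollary is essentially a concrete unpacking of the cohomological classification of twists in Proposition \ref{cohom}, with the cocycle $\sigma \mapsto \psi^{-1} \circ {}^{\sigma}\psi \in \Aut_{\overline{\Q}}(E)$ measuring the failure of $\psi$ to descend to $\Q$ and becoming trivial precisely upon restriction to $G_L$.
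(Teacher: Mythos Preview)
Your argument is correct. The paper does not give an explicit proof of this corollary, treating it as an immediate consequence of the cohomological classification in Proposition~\ref{cohom}; your direct orbit--stabilizer argument on $\mathrm{Isom}_{\overline{\Q}}(E,E')$ is exactly the concrete content underlying that classification and establishes the bound cleanly.
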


\subsection{Open image theorems}

\begin{theorem}\label{open-CM} (\cite{We55}, \cite{We55bis})

\noindent
Let $E/\Q$ be an elliptic curve such that $\End_{\overline{\Q}}(E) \simeq O \not\simeq \Z$  is an order in the imaginary quadratic field $K$.
Consider the projective limit $\widehat{O} := \ds\varprojlim_{m} O/m O$.
Then:
\begin{enumerate}
\item[(i)]
$\widehat{O}$ is a free $\hat{\mbz}$-module of rank $2$; consequently, after choosing a $\hat{\mbz}$-basis of $\widehat{O}$, we obtain an embedding
\begin{equation} \label{iotaembedding}
\iota : \widehat{O}^\times \hookrightarrow \GL_2(\hat{\mbz});
\end{equation}
\item[(ii)]
$K(E_{\text{tors}}) = \Q(E_{\text{tors}})$ is a free $\widehat{O}$-module of rank 1, acted on $\widehat{O}$-linearly by $G_K$;
\item[(iii)]
 $\varphi_E(G_K)$ is contained in $\iota(\widehat{O}^\times)$, while $\varphi_E(G_\mbq)$ is contained in the normalizer in $\GL_2(\hat{\mbz})$ of 
 $\iota(\widehat{O}^\times)$;
\item[(iv)]
upon identifying $\widehat{O}^\times$ with $\iota(\widehat{O}^\times)$, its image in $\GL_2(\hat{\mbz})$, 
the representation
\begin{equation}
{\varphi_E}|_{G_K} :  G_K \longrightarrow \widehat{O}^\times
\end{equation}
has open image in $\widehat{O}^{\times}$, that is,
$$
\left|
\widehat{O}^{\times} : {\varphi_E}(G_K)
\right|
< \infty.
$$
In particular,  there exists a smallest integer $m_E \geq 1$ such that 
$$
{\varphi_E} (G_K)
 \simeq
 \pr^{-1} \left(
 \varphi_{E, m_E}(G_K)
 \right),
$$
where 
$
\pr : \widehat{O}^{\times} \longrightarrow (O/m_E \; O)^{\times}
$
is the natural projection.
\end{enumerate}
\end{theorem}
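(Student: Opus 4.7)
The plan is to establish the four parts in order, combining standard structural facts about CM elliptic curves (the complex-analytic uniformization and the interpretation of $E[m]$ as an $O/mO$-module) with class field theory for $K$. Throughout we use Theorem \ref{ECM}(ii), which gives that $O$ has class number one.

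For (i), since $O$ is an order in the imaginary quadratic field $K$, it is a free $\Z$-module of rank $2$, so each $O/mO$ is a free $\Z/m\Z$-module of rank $2$. Taking inverse limits gives $\widehat{O}\cong O\otimes_{\Z}\hat{\Z}$, a free $\hat{\Z}$-module of rank $2$. Choosing a $\hat{\Z}$-basis and letting each element of $\widehat{O}^\times$ act by multiplication produces the faithful embedding $\iota$.

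For (ii), I would first show $K \subseteq \Q(E_{\text{tors}})$ (so that $\Q(E_{\text{tors}}) = K(E_{\text{tors}})$). This follows because the CM is defined over $K$: any non-rational element of $O$ corresponds to a nontrivial isogeny $E\to E$, and Galois-conjugating this isogeny by $G_\Q$ sends $O$ to its complex conjugate in $\End_{\overline{\Q}}(E)$; combined with standard trace-of-Frobenius/reduction arguments one gets $K \subseteq \Q(E[m])$ for $m$ large. Next, since $h(O)=1$, we may write $E(\C) \simeq \C/O$, whence $E[m]\simeq \tfrac{1}{m}O/O \simeq O/mO$ as an $O$-module. Free rank-$1$-ness is preserved on passing to the projective limit, yielding the free $\widehat{O}$-module structure on the total Tate module, and the action of $G_K$ commutes with $O$-multiplication because $O \subseteq \End_K(E)$.

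For (iii), any $\widehat{O}$-linear automorphism of a free rank-one $\widehat{O}$-module is multiplication by a unit of $\widehat{O}$, so $\varphi_E(G_K) \subseteq \iota(\widehat{O}^\times)$. For $\sigma \in G_\Q \setminus G_K$, conjugation of endomorphisms by $\sigma$ realizes the nontrivial element of $\Gal(K/\Q)$ on $O$, hence sends the image of multiplication-by-$\alpha$ to multiplication-by-$\overline{\alpha}$; this is again in $\iota(\widehat{O}^\times)$, so $\varphi_E(G_\Q)$ normalizes the image.

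For (iv), the deep input is the construction of Weil (and Deuring) associating to the CM elliptic curve $E/\Q$ an algebraic Hecke Gr\"ossencharacter $\psi_E$ of $K$ of infinity type $(1,0)$ and finite conductor; by class field theory, $\varphi_E|_{G_K}$ is identified with the adelic avatar of $\psi_E$. The openness of the image in $\widehat{O}^\times$ then follows because $\psi_E$ is unramified outside the conductor and its local components at unramified places exhaust $O_v^\times$. Given openness, the open subgroup $\varphi_E(G_K) \subseteq \widehat{O}^\times$ contains some principal congruence subgroup $\ker(\widehat{O}^\times \twoheadrightarrow (O/mO)^\times)$; since the set of such $m$ is closed under $\gcd$, the least such $m$ exists, and this is $m_E$. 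The main obstacle is (iv), which is genuinely nonelementary and relies on CM/Hecke-character theory; parts (i)--(iii) are essentially formal once uniformization and rationality of the CM over $K$ are in hand.
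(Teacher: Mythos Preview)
The paper does not supply its own proof of this theorem; it is quoted as a background result from Weil's 1955 papers, so there is no in-paper argument to compare against. Your outline follows the standard route through the main theorem of complex multiplication and is essentially correct for (i)--(iii): the uniformization $E(\C)\simeq \C/O$ (available since $h(O)=1$ by Theorem~\ref{ECM}(ii)) gives the $O/mO$-module structure on $E[m]$, and the rationality of the CM over $K$ gives the $\widehat{O}$-linearity of the $G_K$-action and the normalizer statement for $G_\Q$. Your justification that $K\subseteq \Q(E_{\text{tors}})$ is a bit hand-wavy; a cleaner way is to note that $\varphi_{E,\ell}(G_\Q)$ lands in the normalizer of the Cartan $\iota((O/\ell O)^\times)$ while $\varphi_{E,\ell}(G_K)$ lands in the Cartan itself, so the quadratic character cutting out $K$ factors through $\Q(E[\ell])$ for suitable $\ell$.

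There is one genuine slip in (iv). You write that ``$\psi_E$ is unramified outside the conductor and its local components at unramified places exhaust $O_v^\times$,'' but these two clauses contradict each other: unramified at $v$ means precisely that $\psi_{E,v}$ is \emph{trivial} on $O_v^\times$. The openness does not come from the Hecke character being large on local units; rather, the main theorem of CM identifies $K(E[m])$ with the ray class field of $K$ of modulus $m\cdot\mathfrak{f}$ (for $\mathfrak{f}$ the conductor of $\psi_E$), so that $\Gal(K(E[m])/K)\simeq (O/mO)^\times$ whenever $\gcd(m,\mathfrak{f})=1$. This immediately gives surjectivity of $\varphi_{E,m}|_{G_K}$ for such $m$, and hence open image in $\widehat{O}^\times$. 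With that correction your sketch is sound.
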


\begin{corollary}\label{mE-CM}
Let $E/\Q$ be an elliptic curve such that $\End_{\overline{\Q}}(E) \simeq O \not\simeq \Z$.
With notation as above, 
for any integer $m \geq 1$, written  uniquely as $m = m_1 m_2$ 
for some integers $m_1$, $m_2$ with 
$m_1 \mid m_E^{\infty}$
and
$\gcd(m_2, m_E) = 1$,
we have
$$
\varphi_{E, m}(G_K)
\simeq 
\pr_{m_1,d}^{-1}(\varphi_{E, d}(G_K)) \times (O/m_2 O)^\times,
$$ 
where 
$d := \gcd(m_1,m_E)$
and
 $\pr_{m_1,d} : (O/m_1 \; O)^\times \longrightarrow (O/d \; O)^\times$ is the natural projection.
\end{corollary}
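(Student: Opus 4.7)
The plan is to combine Theorem \ref{open-CM}(iv), which says that $\varphi_E(G_K) = \pr^{-1}(\varphi_{E,m_E}(G_K))$ inside $\widehat{O}^\times$, with the prime-by-prime decomposition $\widehat{O}^\times \simeq \prod_\ell O_\ell^\times$ where $O_\ell := O \otimes_\Z \Z_\ell$. The target identity is essentially a formal consequence of this preimage description together with the Chinese Remainder Theorem.

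First I would observe that, since $\gcd(m_1,m_2) = 1$, CRT gives $(O/mO)^\times \simeq (O/m_1 O)^\times \times (O/m_2 O)^\times$, and the reduction map $\varphi_{E,m}$ factors through this product. So it suffices to identify separately the projections of $\varphi_{E,m}(G_K)$ onto each factor. Next, I would handle the $m_2$ factor: because every prime $\ell \mid m_2$ satisfies $\ell \nmid m_E$, the reduction $\pr$ onto $(O/m_E O)^\times$ is trivial on the $\ell$-component of $\widehat{O}^\times$; consequently the preimage description forces $\varphi_E(G_K)$ to surject onto $\prod_{\ell \mid m_2} O_\ell^\times$, and hence onto $(O/m_2 O)^\times$.

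The bulk of the argument goes into the $m_1$ factor. Here every prime $\ell \mid m_1$ also divides $m_E$, so I would work one prime $\ell$ at a time and compare $v_\ell(m_1)$ with $v_\ell(m_E)$, noting $v_\ell(d) = \min(v_\ell(m_1), v_\ell(m_E))$. If $v_\ell(m_1) \leq v_\ell(m_E)$, then the $\ell$-part of $d$ equals the $\ell$-part of $m_1$, the corresponding factor of $\pr_{m_1,d}$ is the identity, and both sides give $\varphi_{E,\ell^{v_\ell(m_1)}}(G_K)$. If $v_\ell(m_1) > v_\ell(m_E)$, then the preimage description at the $\ell$-adic level (which is what Theorem \ref{open-CM}(iv) provides locally at $\ell$) pushes forward to say that the image in $(O/\ell^{v_\ell(m_1)} O)^\times$ is exactly the preimage of $\varphi_{E,\ell^{v_\ell(m_E)}}(G_K)$ under the natural projection. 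Reassembling these local statements via CRT across $\ell \mid m_1$ yields $\pr_{m_1,d}^{-1}(\varphi_{E,d}(G_K))$.

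The main obstacle I anticipate is bookkeeping: making sure that the full-preimage structure at the adelic level really does descend cleanly to each finite level $\ell^{v_\ell(m_1)}$, and that the two separate cases $v_\ell(m_1) \leq v_\ell(m_E)$ and $v_\ell(m_1) > v_\ell(m_E)$ assemble into the single uniform statement involving $d = \gcd(m_1,m_E)$. No new input beyond Theorem \ref{open-CM} is needed; the argument is a careful CRT unpacking of the minimality of $m_E$.
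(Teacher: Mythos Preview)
Your approach is correct and is exactly what the paper has in mind: the corollary is stated without proof, as an immediate consequence of Theorem~\ref{open-CM}(iv) together with the Chinese Remainder Theorem, and your unpacking of the preimage description $\varphi_E(G_K) = \pr^{-1}(\varphi_{E,m_E}(G_K))$ along the decomposition $\widehat{O}^\times \simeq \prod_\ell O_\ell^\times$ is the natural way to see it.

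One small point of care in your write-up: the sentence ``it suffices to identify separately the projections of $\varphi_{E,m}(G_K)$ onto each factor'' is not valid for an arbitrary subgroup of a product, and similarly, when handling the $m_1$ factor prime by prime, remember that $\varphi_{E,d}(G_K)$ itself need not split as a product over the primes $\ell \mid d$. What makes the argument go through is that the \emph{kernel} $\ker\bigl((O/m_1 O)^\times \to (O/dO)^\times\bigr)$ does split as a product over primes, and the full-preimage structure at level $m_E$ forces this kernel to lie in the image of $\varphi_E(G_K)$; combined with the (tautological) surjection onto $\varphi_{E,d}(G_K)$, this gives the desired equality. Your own anticipated ``bookkeeping'' caveat already flags this, so just be explicit about it when you write out the details.
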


\begin{theorem}\label{open-nonCM}(\cite{Se72})

\noindent
Let $E/\Q$ be an elliptic curve such that $\End_{\overline{\Q}}(E) \simeq \Z$.
Then
$\varphi_E$ has open image in $\GL_2\left( \hat{\mbz} \right)$, that is,
$$
\left|\GL_2\left(\hat{\Z}\right) :  \varphi_E (G_{\Q}) \right| < \infty.
$$ 
In particular, there exists a smallest integer $m_E \geq 1$ such that 
$$
\varphi_E(G_{\Q})
=
\pr^{-1}(\varphi_{E, m_E}(G_{\Q})),
$$
where $\pr: \GL_2\left(\hat{\Z}\right) \longrightarrow \GL_2(\Z/m_E \Z)$ is the natural projection.
\end{theorem}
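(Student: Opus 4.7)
The plan is to establish openness of $\varphi_E(G_\Q)$ in $\GL_2(\hat{\Z})$ by analyzing each rational prime $\ell$ separately and then gluing the resulting $\ell$-adic statements, following the architecture of Serre's original argument. By the Chinese Remainder Theorem, $\hat{\Z} \simeq \prod_\ell \Z_\ell$, so it suffices to show: (i) for each $\ell$, the $\ell$-adic image $\varphi_{E,\ell^\infty}(G_\Q)$ is open in $\GL_2(\Z_\ell)$; (ii) for all but finitely many $\ell$, the mod-$\ell$ image $\varphi_{E,\ell}(G_\Q)$ equals $\GL_2(\F_\ell)$; and (iii) the $\ell$-adic images are ``sufficiently independent'' as $\ell$ varies. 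Together these properties force the adelic image to contain a principal congruence subgroup, which immediately gives the existence of a smallest integer $m_E$ as in the statement.

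For step (i), fix $\ell$ and examine the reduction map $\GL_2(\Z_\ell) \to \GL_2(\F_\ell)$, whose kernel is a pro-$\ell$ group with successive quotients isomorphic (as $\F_\ell$-vector spaces) to $\mathfrak{gl}_2(\F_\ell)$. A Lie-theoretic lifting argument, exploiting that $\mathfrak{sl}_2(\F_\ell)$ is a simple $\F_\ell$-Lie algebra for $\ell \geq 5$, shows that if the mod-$\ell$ image contains $\SL_2(\F_\ell)$, then the full $\ell$-adic image contains $\SL_2(\Z_\ell)$; combined with surjectivity of the mod-$\ell^\infty$ cyclotomic character on determinants (forced by the Weil pairing), this yields openness at every such $\ell$. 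The finitely many exceptional primes are handled by a finer analysis modulo $\ell^n$, showing that the filtration on the image stabilizes once $n$ is large enough. Step (ii) is the heart of the argument: one enumerates, via Dickson's classification, the maximal proper subgroups of $\GL_2(\F_\ell)$ containing an element of determinant a generator of $\F_\ell^\times$---Borel subgroups, normalizers of split or non-split Cartan subgroups, and exceptional subgroups with projective image $A_4$, $S_4$, or $A_5$---and rules out each possibility for $\ell$ sufficiently large depending on $E$. A Borel image would produce a $\Q$-rational cyclic $\ell$-isogeny from $E$, Cartan-normalizer images correspond to $\Q$-points on specific modular curves, and exceptional images give bounded Frobenius traces incompatible with Hasse's bound once $\ell$ is large.

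Finally, to obtain (iii) and conclude, I invoke Goursat's lemma together with the fact that $\PSL_2(\F_\ell)$ is a nonabelian simple group for $\ell \geq 5$ admitting no common non-trivial quotient with $\PSL_2(\F_{\ell'})$ for distinct $\ell, \ell' \geq 5$; this forces the projection of $\varphi_E(G_\Q)$ onto $\prod_{\ell \notin S} \GL_2(\Z_\ell)$, where $S$ is the finite exceptional set from (ii), to contain $\prod_{\ell \notin S} \SL_2(\Z_\ell)$. Combining with the openness at primes in $S$ from (i), one concludes that $\varphi_E(G_\Q)$ is open. The integer $m_E$ is then defined as the smallest $m$ for which $\ker\bigl(\GL_2(\hat{\Z}) \to \GL_2(\Z/m\Z)\bigr) \subseteq \varphi_E(G_\Q)$, its existence being guaranteed because every open subgroup of $\GL_2(\hat{\Z})$ contains some principal congruence subgroup, and one may take the least common multiple of the relevant $\ell$-local conductors. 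I expect the main obstacle to be step (ii), particularly uniform elimination of Cartan-normalizer and Borel images, which in Serre's treatment rests on a delicate interplay of isogeny theorems, Frobenius trace estimates, and modular curve geometry.
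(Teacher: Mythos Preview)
The paper does not actually prove this theorem: it is stated with the citation \cite{Se72} and no proof environment follows, the text proceeding directly to Corollary~\ref{mE-nonCM}. So there is nothing in the paper's own treatment to compare your proposal against; the authors simply invoke Serre's result as background.

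That said, your sketch is a faithful outline of the architecture of Serre's original argument, and the steps you identify---openness of each $\ell$-adic image via the Lie-theoretic lifting, surjectivity of the mod-$\ell$ image for all but finitely many $\ell$ via Dickson's classification and modular-curve arguments, and the Goursat/simplicity gluing to pass from local to adelic openness---are the standard ingredients. Your remark that the ``in particular'' clause follows because every open subgroup of the profinite group $\GL_2(\hat{\Z})$ contains a principal congruence subgroup is exactly the right justification for the existence of $m_E$. One small caveat: the assertion in step~(i) that ``if the mod-$\ell$ image contains $\SL_2(\F_\ell)$ then the $\ell$-adic image contains $\SL_2(\Z_\ell)$'' is true for $\ell \geq 5$ but fails for $\ell = 2, 3$; you acknowledge the exceptional primes need finer treatment, but be aware the lifting lemma itself already requires $\ell \geq 5$, not just the Dickson step.
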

\begin{corollary}\label{mE-nonCM}
Let $E/\Q$ be an elliptic curve such that $\End_{\overline{\Q}}(E) \simeq \Z$.
With notation as above, 
for any integer $m \geq 1$, written  uniquely as $m = m_1 m_2$ 
for some integers $m_1$, $m_2$ with 
$m_1 | m_E^{\infty}$
and
$\gcd(m_2, m_E) = 1$,
we have
$$
\varphi_{E, m}(G_{\Q})
\simeq 
\pr_{m_1,d}^{-1}( \varphi_{E, d}(G_{\Q})) \times \GL_2(\Z/m_2 \Z),
$$ 
 where 
 $d := \gcd(m_1,m_E)$
 and $\pr_{m_1,d} : \GL_2(\mbz/m_1\mbz) \longrightarrow \GL_2(\mbz/d\mbz)$ is the natural projection.
 \end{corollary}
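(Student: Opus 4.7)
The plan is to derive the corollary as a direct group-theoretic consequence of Theorem \ref{open-nonCM}, using the Chinese Remainder Theorem to track how the constraint at level $m_E$ propagates to level $m$. First I would decompose $\hat{\Z} = \prod_{\ell} \Z_\ell$ and $\GL_2(\hat{\Z}) = \prod_\ell \GL_2(\Z_\ell)$, and separate the primes dividing $m_E$ from those coprime to $m_E$. Since the natural projection $\pr : \GL_2(\hat{\Z}) \to \GL_2(\Z/m_E\Z)$ is supported only on the factors with $\ell \mid m_E$, Theorem \ref{open-nonCM} yields the direct-product description
$$
\varphi_E(G_\Q) \;=\; \widetilde{H} \;\times\; \prod_{\ell \,\nmid\, m_E} \GL_2(\Z_\ell),
$$
where $\widetilde{H} \subseteq \prod_{\ell \mid m_E} \GL_2(\Z_\ell)$ denotes the preimage of $\varphi_{E,m_E}(G_\Q)$ under reduction mod $m_E$.

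Next I would pass to level $m$ via the reduction map $\GL_2(\hat{\Z}) \to \GL_2(\Z/m\Z) \simeq \GL_2(\Z/m_1\Z) \times \GL_2(\Z/m_2\Z)$. Because $m_1 \mid m_E^\infty$, the projection to $\GL_2(\Z/m_1\Z)$ only involves the factors with $\ell \mid m_E$, while the projection to $\GL_2(\Z/m_2\Z)$ only involves factors with $\ell \nmid m_E$ (since $\gcd(m_2,m_E)=1$). These two sets of primes being disjoint, the image of $\varphi_E(G_\Q)$ in $\GL_2(\Z/m\Z)$ splits as a genuine direct product of the image of $\widetilde H$ in $\GL_2(\Z/m_1\Z)$ with the image of $\prod_{\ell \,\nmid\, m_E} \GL_2(\Z_\ell)$ in $\GL_2(\Z/m_2\Z)$; the latter factor is patently all of $\GL_2(\Z/m_2\Z)$.

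It then remains to identify the image of $\widetilde H$ in $\GL_2(\Z/m_1\Z)$ with $\pr_{m_1,d}^{-1}(\varphi_{E,d}(G_\Q))$. The inclusion $\subseteq$ is immediate, since any such image, reduced further modulo $d = \gcd(m_1,m_E)$, coincides with the mod-$d$ reduction of some element of $\varphi_{E,m_E}(G_\Q)$, hence lies in $\varphi_{E,d}(G_\Q)$. For the reverse inclusion, given $A \in \GL_2(\Z/m_1\Z)$ with $A \bmod d \in \varphi_{E,d}(G_\Q)$, I would choose $B \in \varphi_{E,m_E}(G_\Q)$ reducing to $A \bmod d$, and build a lift $\widetilde A \in \prod_{\ell\mid m_E}\GL_2(\Z_\ell)$ prime by prime, distinguishing the two cases $v_\ell(m_1) \leq v_\ell(m_E)$ and $v_\ell(m_1) > v_\ell(m_E)$. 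In each case the constraints from $A$ and $B$ are automatically compatible at $\ell$, and they admit a common lift in $\GL_2(\Z_\ell)$ because the reduction map $\GL_2(\Z_\ell) \twoheadrightarrow \GL_2(\Z/\ell^k\Z)$ is surjective for every $k \geq 0$.

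The only step that requires a moment's care is this prime-by-prime lifting; everything else is pure CRT bookkeeping. The argument is the non-CM analogue of (and essentially identical in structure to) the reasoning behind Corollary \ref{mE-CM}, with $\widehat O^\times$ replaced by $\GL_2(\hat\Z)$, so no serious obstacle is expected.
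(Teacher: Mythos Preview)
The paper states this corollary without proof, treating it as an immediate consequence of Theorem \ref{open-nonCM} via the Chinese Remainder Theorem. Your argument is correct and supplies precisely the routine CRT bookkeeping the paper omits; the prime-by-prime lifting you describe for the reverse inclusion is the natural way to verify that the image of $\widetilde H$ in $\GL_2(\Z/m_1\Z)$ is exactly $\pr_{m_1,d}^{-1}(\varphi_{E,d}(G_\Q))$.
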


A useful consequence to the above two open image theorems is:

\begin{lemma}\label{vertical-division}
Let $E/\Q$ be an elliptic curve and let $m_E$ be as in Theorem \ref{open-CM}, respectively Theorem \ref{open-nonCM}.
Let $\ell \mid m_E$ and $d \mid m_E$ with $\ell \nmid d$ (recall that $\ell$ denotes a rational prime). 
\begin{enumerate}
\item[(i)]
If $\End_{\overline{\Q}}(E) \not\simeq \Z$, then
$$
\left[K\left(E\left[\ell^{v_\ell(m_E) + \delta} d \right]\right) : K\right] = 
\ell^{2\delta} 
\left[K\left(E\left[\ell^{v_\ell(m_E)} d\right]\right) : K\right]
\; \; \; \; \forall \; \delta \in \N,
$$
where $K \simeq \End_{\ol{\Q}}(E) \otimes \Q$.
\item[(ii)]
If $\End_{\overline{\Q}}(E) \simeq \Z$, then
$$
\left[\Q\left(E\left[\ell^{v_\ell(m_E) + \delta} d\right]\right) : \Q\right] = 
\ell^{4 \delta}
\left[\Q\left(E\left[\ell^{v_\ell(m_E)} d\right]\right) : \Q\right]
\; \; \; \; \forall \; \delta \in \N.
$$
\end{enumerate}
\end{lemma}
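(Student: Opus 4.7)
The plan is to apply the structural results in Corollaries~\ref{mE-CM} and~\ref{mE-nonCM} to reduce the computation of $[F(E[\ell^{v_\ell(m_E)+\delta}d]) : F]$, for $F = K$ or $F = \mbq$, to computing the cardinality of a kernel of a natural reduction map between the ambient matrix or unit groups. Writing $m \colonequals \ell^{v_\ell(m_E)+\delta}d$, every prime dividing $m$ also divides $m_E$, so in the decomposition $m = m_1 m_2$ provided by those corollaries we have $m_1 = m$, $m_2 = 1$, and $\gcd(m_1,m_E) = \ell^{v_\ell(m_E)}d =: d'$. Consequently $\varphi_{E,m}(G_F)$ is the full preimage of $\varphi_{E,d'}(G_F)$ under the reduction from level $m$ to level $d'$, so the ratio $[F(E[m]):F]/[F(E[d']):F]$ equals the cardinality of the kernel of that reduction map restricted to the appropriate ambient group.

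By the Chinese Remainder Theorem, since $\gcd(\ell, d) = 1$, the ambient group at level $m$ (either $\GL_2(\mbz/m\mbz)$ or $(O/mO)^\times$) factors compatibly as a product of its $\ell^{v_\ell(m_E)+\delta}$-component and its $d$-component, and the $d$-component of the reduction to level $d'$ is the identity. The kernel is therefore concentrated at the prime $\ell$, and I reduce to computing $|\ker(\GL_2(\mbz/\ell^{v_\ell(m_E)+\delta}\mbz) \to \GL_2(\mbz/\ell^{v_\ell(m_E)}\mbz))|$ in case (ii), and $|\ker((O/\ell^{v_\ell(m_E)+\delta}O)^\times \to (O/\ell^{v_\ell(m_E)}O)^\times)|$ in case (i).

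For these prime-power kernels, I would identify kernel elements with matrices of the form $I + \ell^{v_\ell(m_E)} M$ for $M \in M_2(\mbz/\ell^{\delta}\mbz)$ in the non-CM case, and with elements $1 + \ell^{v_\ell(m_E)}\alpha$ for $\alpha \in O/\ell^{\delta}O$ in the CM case. Since $O$ is free of rank two as a $\mbz$-module, these sets have cardinalities $\ell^{4\delta}$ and $\ell^{2\delta}$ respectively, matching the exponents in the lemma. The only genuine technical point, which I view as the main obstacle, is confirming that every such $I + \ell^{v_\ell(m_E)}M$ (resp.\ $1 + \ell^{v_\ell(m_E)}\alpha$) is actually a unit, so that the kernel is as large as the naive additive count suggests; this is where the hypothesis $\ell \mid m_E$ enters, since $v_\ell(m_E) \geq 1$ makes the formal geometric-series inverse terminate modulo $\ell^{v_\ell(m_E)+\delta}$. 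Once this verification is in place, both equalities in the lemma follow at once.
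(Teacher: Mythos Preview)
Your proposal is correct and follows essentially the same approach as the paper's proof: both apply Corollaries~\ref{mE-CM} and~\ref{mE-nonCM} with $m = \ell^{v_\ell(m_E)+\delta}d$, observe that $m_1 = m$, $m_2 = 1$, $\gcd(m_1,m_E) = \ell^{v_\ell(m_E)}d$, and then compute the size of the relevant reduction kernel using $v_\ell(m_E) \geq 1$. Your treatment is in fact more explicit than the paper's, which simply asserts the kernel sizes $\ell^{2\delta}$ and $\ell^{4\delta}$ without writing out the $I + \ell^{v_\ell(m_E)}M$ parametrization or the unit verification.
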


\begin{proof}
We apply Corollary \ref{mE-CM}, respectively Corollary \ref{mE-nonCM}, with 
$m := \ell^{v_\ell(m_E) + \delta} d$, 
in which case
$m_1 = m$
and 
 $m_2 = 1$,
 giving
$\gcd(m_1,m_E) = \ell^{v_\ell(m_E)}d$.  
We also observe that, since $v_\ell(m_E) > 0$, 
\[
\begin{split}
\#\Ker\left( \left(O/\ell^{v_\ell(m_E) + \delta} d \; O\right)^\times \longrightarrow \left(O/\ell^{v_\ell(m_E)} d \; O\right)^\times \right)  
&= \ell^{2\gd}, \\
\#\Ker\left( \GL_2\left(\mbz/ \ell^{v_\ell(m_E) + \delta} d \mbz\right)\rightarrow \GL_2\left(\mbz/ \ell^{v_\ell(m_E)} d \mbz\right) \right)  &= \ell^{4\gd}.
\end{split}
\]
Lemma \ref{vertical-division}  follows.

\end{proof}

%

In applications, 
 it  is desirable to explicitly bound $m_E$ in terms of $E$; such a bound is  illustrated in the result below:

\begin{proposition} \label{mEboundlemma}
Let $E/\Q$ be an elliptic curve such that $\End_{\overline{\Q}}(E) \simeq \Z$.
Let $m_E$ be as in  Theorem \ref{open-nonCM},
let $Y^2 = X^3 + aX +b$ be a Weierstrass model  for $E$ with $a,b \in \Z$,
and
denote by $\Delta(a, b) := -16 (4 a^3 + 27 b^2)$ the discriminant of this model.
Then there exists a positive absolute constant $\gamma$ such that
$$
m_E  \leq 2 \ |\GL_2(\hat{\Z}) : \varphi_{E}(G_\Q)|  \ \rad(|\Delta(a, b)|) \ll \max\{|a|^3,|b|^2\} \log(\max\{|a|^3,|b|^2\})^\gamma,
$$
where the $\ll$-constant is  absolute.
\end{proposition}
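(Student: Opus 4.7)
The plan is to prove the inequality in two stages: first, establish the bound $m_E \leq 2 I \cdot \rad(|\Delta(a,b)|)$ where $I \colonequals |\GL_2(\hat{\Z}) : \varphi_E(G_\Q)|$, using only intrinsic properties of the Galois representation together with Néron--Ogg--Shafarevich; second, invoke the Masser--Wüstholz theorem \cite{MaWu} to bound $I$ polynomially in a height of $E$, and combine with the trivial estimate $\rad(|\Delta(a,b)|) \leq |\Delta(a,b)| \ll \max\{|a|^3, |b|^2\}$ to obtain the final inequality.

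For the first stage, write $m_E = \prod_\ell \ell^{v_\ell(m_E)}$ and, for each prime $\ell \mid m_E$, distinguish two cases. If $\ell \nmid \Delta(a,b)$, then by Néron--Ogg--Shafarevich the $\ell$-adic representation $\varphi_{E,\ell^\infty}$ is unramified at all rational primes except possibly $\ell$ itself; combining this with Serre's vertical maximality result (which says that for such $\ell$ the full $\ell$-adic image $\varphi_{E,\ell^\infty}(G_\Q)$ coincides with the preimage under $\GL_2(\Z_\ell) \to \GL_2(\Z/\ell\Z)$ of its own mod-$\ell$ image, up to controlled exceptions at $\ell \in \{2,3\}$, which is where the factor of $2$ arises), we see that $\ell^{v_\ell(m_E)}$ must divide the local index $I_\ell \colonequals |\GL_2(\Z_\ell) : \varphi_{E,\ell^\infty}(G_\Q)|$. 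In the remaining case $\ell \mid \Delta(a,b)$, the contribution to $m_E$ is absorbed into the $\rad(|\Delta(a,b)|)$ factor after again using local maximality above the mod-$\ell$ level, which bounds $v_\ell(m_E)$ in terms of the $\ell$-part of $I$. Assembling these bounds over all primes $\ell \mid m_E$ yields $m_E \leq 2 \cdot I \cdot \rad(|\Delta(a,b)|)$.

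For the second stage, the Masser--Wüstholz theorem provides an absolute constant $\gamma > 0$ and an absolute $c_0 > 0$ such that $I \leq c_0 (\log H)^\gamma$ where $H \colonequals \max\{|a|^3, |b|^2\}$ (using that the Faltings height of $E$ is $O(\log H)$). Since $|\Delta(a,b)| = 16|4a^3 + 27b^2| \ll H$, we have $\rad(|\Delta(a,b)|) \leq |\Delta(a,b)| \ll H$, so multiplying gives $m_E \ll H (\log H)^\gamma$, as required.

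The main obstacle is the first stage: one needs to confirm that entanglements between the various $\ell$-adic representations (i.e., nontrivial intersections among the fields $\Q(E[\ell^\infty])$) can only occur at primes dividing $\rad(|\Delta(a,b)|)$ or at the sporadic small primes where mod-$\ell$ surjectivity can fail without violating the open-image theorem, so that no further primes beyond these contribute to $m_E$. Once this is isolated through careful use of the ramification criterion and vertical maximality, the rest is bookkeeping, and the polynomial bound from Masser--Wüstholz converts the intrinsic index to an explicit function of the coefficients $a,b$.
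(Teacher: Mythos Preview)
Your two–stage outline matches the paper's proof exactly in structure: the paper obtains the first inequality by citing \cite{Jo17} and the second by citing \cite[Theorem 1.1]{Zy} (which sharpens \cite{MaWu}), together with the trivial bound on $\rad(|\Delta(a,b)|)$. Your stage~2 is therefore correct, modulo replacing the direct appeal to Masser--W\"ustholz by Zywina's refinement, which is what actually gives the stated form $I \ll (\log H)^\gamma$.

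The genuine gap is in stage~1. Your case analysis only controls the \emph{vertical} contribution to $m_E$: N\'eron--Ogg--Shafarevich plus Serre's lifting lemma tell you, for $\ell \nmid \Delta$ and $\ell \geq 5$, how the $\ell$-adic image $\varphi_{E,\ell^\infty}(G_\Q)$ sits inside $\GL_2(\Z_\ell)$. But $m_E$ is the conductor of the full adelic image, and a prime $\ell$ can divide $m_E$ purely because of a \emph{horizontal} entanglement $\Q(E[\ell^\infty]) \cap \Q(E[m']) \neq \Q$ with $\gcd(\ell,m')=1$, even when $I_\ell = 1$. Your asserted divisibility $\ell^{v_\ell(m_E)} \mid I_\ell$ for $\ell \nmid \Delta$ therefore does not follow from the ingredients you name. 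Ruling out such entanglements at good primes with surjective mod-$\ell$ image requires a further argument combining the Jordan--H\"older structure of $\GL_2(\Z_\ell)$ (in particular the simplicity of $\PSL_2(\F_\ell)$ for $\ell \geq 5$ and the fact that the abelian quotients factor through $\det$) with ramification at $\ell$ to force any common quotient with the prime-to-$\ell$ tower to be trivial. You flag exactly this as ``the main obstacle'' in your final paragraph, but you do not carry it out; doing so is essentially the content of \cite{Jo17}, which is why the paper simply cites that result rather than reproving it.
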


\begin{proof}

The bound 
\[
m_E  \leq 2 \ |\GL_2(\hat{\Z}) : \varphi_{E}(G_\Q)| \ \rad(|\Delta(a, b)|)
\]
follows from the main result in \cite{Jo17},
while the bound
\[
|\GL_2(\hat{\Z}) : \varphi_{E}(G_\Q)| \ll  \log(\max\{|a|^3,|b|^2\})^\gamma
\]
follows from \cite[Theorem 1.1]{Zy} (see also \cite{MaWu}).  The bound $\rad(|\Delta(a, b)|) \ll \max\{|a|^3,|b|^2\}$ is straightforward.
\end{proof}

\subsection{Serre curves}

\begin{lemma}\label{serre-subgroup}(\cite[Section 5.5]{Se72})

\noindent
Let $E/\Q$ be an elliptic curve. 
Then 
$
\left|
\GL_2\left(\hat{\Z}\right) : \varphi_E(G_{\Q})
\right|
\geq 2.
$
\end{lemma}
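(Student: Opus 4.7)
The plan is to exhibit an explicit non-trivial quadratic character $\chi : \GL_2(\hat{\Z}) \to \{\pm 1\}$ whose kernel contains $\varphi_E(G_\Q)$. Since $\ker \chi$ has index $2$ in $\GL_2(\hat{\Z})$, this immediately yields the bound.

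First I would construct two quadratic characters of $G_\Q$ and show they coincide. The first, call it $\alpha$, is obtained by composing the mod-$2$ representation with the sign map: $\alpha : G_\Q \xrightarrow{\varphi_{E,2}} \GL_2(\F_2) \simeq S_3 \xrightarrow{\mathrm{sgn}} \{\pm 1\}$. Fixing a Weierstrass model $y^2 = x^3+ax+b$, the field $\Q(E[2])$ is the splitting field of $x^3 + ax + b$, and by the classical criterion relating the discriminant of a cubic to the parity of its Galois group, the fixed field of $\alpha$ is exactly $\Q(\sqrt{\Delta})$, where $\Delta$ is the discriminant of the model. The second character, call it $\beta$, is defined as $\beta := \chi_D \circ \det \circ \varphi_E$, where $D$ is the squarefree part of $\Delta$ and $\chi_D : \hat{\Z}^\times \to \{\pm 1\}$ is the Dirichlet character cutting out $\Q(\sqrt{D})/\Q$. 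By the Weil pairing, $\det \circ \varphi_E$ equals the cyclotomic character $\chi_{\mathrm{cyc}} : G_\Q \to \hat{\Z}^\times$, so $\beta$ cuts out the unique quadratic subfield of $\Q(\zeta_\infty)$ corresponding to $\chi_D$, namely $\Q(\sqrt{D}) = \Q(\sqrt{\Delta})$. Therefore $\alpha$ and $\beta$ have identical kernels in $G_\Q$, and hence $\alpha = \beta$.

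Next I would package this equality as a statement about $\GL_2(\hat{\Z})$. Define
\[
\chi : \GL_2(\hat{\Z}) \longrightarrow \{\pm 1\}, \qquad \chi(g) := \mathrm{sgn}(g \bmod 2) \cdot \chi_D(\det g),
\]
which is a well-defined character because both factors are. The identity $\alpha = \beta$ from the previous step translates directly into $\chi \circ \varphi_E \equiv 1$, so $\varphi_E(G_\Q) \subseteq \ker \chi$.

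Finally I would verify that $\chi$ is itself non-trivial on $\GL_2(\hat{\Z})$. If $D = 1$, then $\chi_D$ is trivial and $\chi$ reduces to the sign character through the mod-$2$ reduction, which is non-trivial since $\GL_2(\hat{\Z}) \twoheadrightarrow S_3$ is surjective. If $D \neq 1$, then $\chi_D \circ \det$ is non-trivial and factors through the abelianization (hence depends only on $\det g$), while $\mathrm{sgn}(g \bmod 2)$ depends on $g \bmod 2$ but is trivial on the identity of $\GL_2(\F_2)$; picking $g \equiv I \pmod 2$ with $\det g$ a non-residue for $\chi_D$ gives $\chi(g) = -1$. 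Either way, $[\GL_2(\hat{\Z}) : \ker \chi] = 2$, proving the lemma.

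The main subtlety I anticipate is the clean identification of the fixed field of $\alpha$ with $\Q(\sqrt{\Delta})$ in the degenerate cases where the mod-$2$ image is smaller than $S_3$ (i.e., when $f(x) = x^3+ax+b$ is reducible over $\Q$); here one must check that the formula $\mathrm{fix}(\alpha) = \Q(\sqrt{\Delta})$ still holds, including the possibility that both sides equal $\Q$ when $\Delta$ is a rational square. The rest of the argument is a formal manipulation of characters, and the Kronecker–Weber step for $\beta$ is standard once one has $\det \circ \varphi_E = \chi_{\mathrm{cyc}}$.
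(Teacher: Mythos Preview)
Your proposal is correct and follows essentially the same approach that the paper outlines immediately after Definition~\ref{Serrecurvedef}: the paper exhibits the containment $\varphi_E(G_\Q) \subseteq \{ g \in \GL_2(\hat{\Z}) : \epsilon(g) = \chi_E(g) \}$, where $\epsilon$ is your sign-of-mod-$2$ character and $\chi_E$ is your $\chi_D \circ \det$, which is exactly the statement that your product character $\chi = \epsilon \cdot \chi_E$ is trivial on the image. Your treatment is in fact slightly more complete, since you explicitly verify the non-triviality of $\chi$ (including the degenerate case $D=1$), a point the paper leaves implicit.
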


\noindent
In particular, no elliptic curve $E/\Q$ satisfies
$
\left|
\GL_2(\Z/m \Z) : 
\varphi_{E, m}(G_{\Q})
\right|
= 
1
$
for all integers $m \geq 1$.  Rather, the best we can hope for is  captured in the following definition:

\begin{definition} \label{Serrecurvedef}
An elliptic curve $E/\Q$ is called a {\bf{Serre curve}} if
$$
\left|
\GL_2(\hat{\Z}) : 
\varphi_{E}(G_{\Q})
\right|
= 2.
$$
\end{definition}

\noindent Equivalently, an elliptic curve is a Serre curve if and only if
\begin{equation} \label{serrecurvedefatfinitelevel}
\left| \GL_2(\mbz/m\mbz) : \varphi_{E,m}(G_\Q) \right| \leq 2 \quad \forall m \geq 1.
\end{equation}

Given $E/\Q$ and denoting by $\Delta_{\text{sf}}(E)$ the squarefree part of the discriminant $\Delta(E)$ of any Weierstrass model for $E$,
the bound in Lemma \ref{serre-subgroup} arises from the containments
\begin{equation} \label{Serrecontainments}
\Q\left(\sqrt{\gD(E)}\right) \subseteq \Q\left(E[2]\right), \quad\quad \Q\left(\sqrt{\gD(E)}\right) \subseteq \Q\left(\zeta_{|d_E|}\right) \subseteq \Q\left(E[|d_E|]\right),
\end{equation}
where 
\[
d_E := \disc\left( \Q\left(\sqrt{\gD(E)}\right) \right) = 
\left\{
\begin{array}{cc}
 \gD_{\text{sf}}(E)  & \text{ if } \gD_{\text{sf}}(E) \equiv 1 (\mod 4),
 \\
4  \gD_{\text{sf}}(E)  & \text{ otherwise.}
\end{array}
\right.
\]
The existence of an integer $d_E$ satisfying \eqref{Serrecontainments} is guaranteed by the Kronecker-Weber Theorem, since $\Q\left(\sqrt{\gD(E)}\right)$ is abelian over $\Q$; this value of $d_E$ minimizes $|d_E|$ subject to \eqref{Serrecontainments}.)
It follows that 
\begin{equation} \label{serresubgroupexplicit}
\varphi_E(G_\mbq) \subseteq \{ g \in \GL_2(\hat{\mbz}) : \epsilon(g) = \chi_E(g) \},
\end{equation}
where the two maps
\[
\begin{split}
\epsilon : &\GL_2(\hat{\mbz}) \rightarrow \GL_2(\mbz/2\mbz) \simeq S_3 \rightarrow \{ \pm 1 \}, \\
\chi_E : &\GL_2(\hat{\mbz}) \rightarrow \hat{\mbz}^\times \rightarrow (\mbz/|d_E|\mbz)^\times \rightarrow \{ \pm 1 \}
\end{split}
\]
are defined as follows:
$\epsilon$ is the projection modulo $2$ followed by the signature character on the permutation group $S_3$ (which is also the unique non-trivial multiplicative character on $\GL_2(\mbz/2\mbz)$);
 $\chi_E$ is the determinant map, followed by the reduction modulo 
$|d_E|$, and then followed by the Kronecker symbol $\left( \frac{d_E}{\cdot} \right)$.


\begin{proposition}\label{mESerreProp}
Let $E/\Q$ be a Serre curve and let $\Delta(E)$ be the discriminant of any
 Weierstrass model for $E$.
Then:
\begin{enumerate}
\item[(i)]
$\End_{\ol{\Q}}(E) \simeq \Z$;
\item[(ii)]
 $E(\Q)_{\text{tors}}$ is trivial;
 \item[(iii)]
 the integer $m_E$ introduced in Theorem \ref{open-nonCM} satisfies
 \begin{equation}
 m_E =
 \left\{
 \begin{array}{cc}
 2  \left| \Delta_{\text{sf}}(E) \right| & \;  \text{if $\Delta_{\text{sf}}(E) \equiv 1 (\mod 4)$,}
 \\
 4  \left|\Delta_{\text{sf}}(E) \right|  & \; \text{otherwise},
 \end{array}
 \right.
 \end{equation}
 \noindent
 where $\Delta_{\text{sf}}(E)$  denotes the squarefree part of $\Delta(E)$;
 \item[(iv)]
 for any integer $m \geq 1$,
 \begin{equation}\label{serreindex} 
[\Q(E[m]) : \Q] = 
\left\{
 \begin{array}{cc}
|\GL_2(\Z/m \Z)|  & \;  \text{if $m_E \nmid m$,}
 \\
 \frac{1}{2} |\GL_2(\Z/m\Z)| & \; \text{otherwise}.
 \end{array}
 \right.
\end{equation}
 \end{enumerate}
\end{proposition}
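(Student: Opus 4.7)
The plan is to treat the four claims in turn, with (i) and (ii) arising as direct contradictions of the Serre curve assumption, while (iii) is a conductor computation that drives (iv).

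For (i), I argue by contradiction. Suppose $\End_{\overline{\Q}}(E) \simeq O \not\simeq \Z$, with $K$ the fraction field of $O$. Then Theorem \ref{open-CM}(iii) forces $\varphi_E(G_\Q)$ into the normalizer of $\iota(\widehat{O}^\times)$ in $\GL_2(\hat\Z)$. But $\iota(\widehat{O}^\times)$ has infinite index in $\GL_2(\hat\Z)$: at any rational prime $\ell$ split in $O$, the local factor $(O \otimes \Z_\ell)^\times \simeq \Z_\ell^\times \times \Z_\ell^\times$ embeds as the diagonal torus of $\GL_2(\Z_\ell)$, a positive-codimension subgroup. The normalizer contains $\iota(\widehat{O}^\times)$ with index $2$ and so also has infinite index, contradicting the Serre curve condition $[\GL_2(\hat\Z) : \varphi_E(G_\Q)] = 2$. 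For (ii), suppose $E(\Q)_{\text{tors}}$ contains a nonzero point and let $\ell$ be any prime dividing its order; then $E(\Q)[\ell] \neq 0$. Hence $\varphi_{E,\ell}(G_\Q)$ lies in the stabilizer in $\GL_2(\F_\ell)$ of a nonzero vector of $\F_\ell^2$, a subgroup of index $\ell^2 - 1 \geq 3$, contradicting \eqref{serrecurvedefatfinitelevel}.

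For (iii) and (iv), the central object is the character $\psi := \epsilon \cdot \chi_E : \GL_2(\hat\Z) \to \{\pm 1\}$. By \eqref{serresubgroupexplicit}, $\varphi_E(G_\Q) \subseteq H := \ker \psi$, and as both sides have index $2$ in $\GL_2(\hat\Z)$, the Serre curve assumption forces $\varphi_E(G_\Q) = H$. For (iii), $m_E$ is by definition the conductor of $\psi$. The factor $\epsilon$ has conductor $2$ (it factors through $\GL_2(\F_2)$), while $\chi_E$ has conductor $|d_E|$ (that of the Kronecker symbol of the fundamental discriminant $d_E$, precomposed with the determinant). Since these two characters are supported on essentially independent quotients of $\GL_2(\hat\Z)$ — one through the mod-$2$ reduction, the other through the determinant mod $|d_E|$ — the conductor of $\psi$ equals $\lcm(2, |d_E|)$. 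A case check using the definition of $d_E$ then yields precisely the two values of $m_E$ in (\ref{mESerre}). For (iv), I use the general identity $[\GL_2(\Z/m\Z) : \pr_m(H)] = [\GL_2(\hat\Z) : H \cdot \ker \pr_m]$, which equals $2$ or $1$ according as $\ker \pr_m \subseteq H$ or not. The inclusion $\ker \pr_m \subseteq \ker \psi$ is equivalent to $\psi$ factoring through $\GL_2(\Z/m\Z)$, and by (iii) this holds if and only if $m_E \mid m$. Consequently $|\varphi_{E,m}(G_\Q)| = |\pr_m(H)|$ equals $|\GL_2(\Z/m\Z)|$ when $m_E \nmid m$ and $\tfrac{1}{2}|\GL_2(\Z/m\Z)|$ otherwise, which gives \eqref{serreindex} via $[\Q(E[m]):\Q] = |\varphi_{E,m}(G_\Q)|$.

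The main obstacle I anticipate is rigorously verifying the conductor equality in (iii): one must show that $\psi$ does not descend through $\GL_2(\Z/m'\Z)$ for any proper divisor $m'$ of $\lcm(2,|d_E|)$. Concretely, for each such $m'$ I would exhibit an element of $\ker(\GL_2(\hat\Z) \to \GL_2(\Z/m'\Z))$ on which $\psi$ takes the value $-1$: if $2 \nmid m'$, choose a matrix reducing to the identity modulo $m'$ whose $2$-adic component is an odd permutation in $\GL_2(\F_2) \simeq S_3$; if some prime power dividing $|d_E|$ fails to divide $m'$, choose a matrix trivial modulo $m'$ whose determinant has nontrivial image under the Kronecker symbol $\left(\tfrac{d_E}{\cdot}\right)$. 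Both constructions succeed by the surjectivity of the natural projections and the independence of the mod-$2$ reduction from the determinant character, so the conductor is exactly $\lcm(2,|d_E|)$.
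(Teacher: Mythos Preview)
Your overall strategy matches the paper's: (i) and (ii) by contradiction with \eqref{serrecurvedefatfinitelevel}, (iii) by identifying $\varphi_E(G_\Q)$ with the kernel of $\psi = \epsilon\cdot\chi_E$ and computing its level as $\lcm(2,|d_E|)$, and (iv) by reading off the index at each finite level. Parts (ii)--(iv) are correct and essentially coincide with the paper's arguments; in fact your conductor discussion in (iii) is more detailed than the paper's, and your direct index computation in (iv) is a clean alternative to the paper's appeal to Corollary~\ref{mE-nonCM}.

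There is, however, a genuine slip in (i). You assert that the normalizer $N$ of $\iota(\widehat O^\times)$ in $\GL_2(\hat\Z)$ contains $\iota(\widehat O^\times)$ with index $2$. This is false: the normalizer decomposes as a product over primes, and at \emph{each} prime $\ell$ the local normalizer already has index $2$ over the local torus (the Weyl group contribution), so globally $[N:\iota(\widehat O^\times)]$ is a product of infinitely many copies of $\Z/2\Z$, not $2$. Consequently you cannot bound $[\GL_2(\hat\Z):N]$ from below via $[\GL_2(\hat\Z):\iota(\widehat O^\times)]/2$. The repair is immediate: rather than passing through $N$, use that $\varphi_E(G_K)\subseteq\iota(\widehat O^\times)$ and $[\varphi_E(G_\Q):\varphi_E(G_K)]\le 2$, whence $[\GL_2(\hat\Z):\varphi_E(G_\Q)]\ge\tfrac12[\GL_2(\hat\Z):\iota(\widehat O^\times)]=\infty$. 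Alternatively, and this is what the paper does, work at a single split prime $\ell$: there $\varphi_{E,\ell}(G_K)$ lands in the split diagonal torus of $\GL_2(\F_\ell)$, of index $\ell(\ell+1)$, so $\varphi_{E,\ell}(G_\Q)$ has index at least $\ell(\ell+1)/2>2$.
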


\begin{proof}
(i)  We proceed by contradiction. Suppose that $\End_{\overline{\Q}}(E) \simeq O \not\simeq \Z$ and let $K$ be the field of fractions of $O$. 
Then there exists an element $a \in O \backslash \mbz$ 
and there exists a rational prime $\ell$ such that the characteristic polynomial of  the action of $a$ on $E[\ell]$ has two distinct roots modulo $\ell$. Indeed, letting $\gD_a$ denote the discriminant of this characteristic polynomial, one may take any $\ell$ satisfying $\left( \frac{\gD_a}{\ell} \right) = 1$. Then
$G_K$ preserves the two eigenspaces of $a$, which implies that, written relative to an eigenbasis, we have
\begin{equation*} 
\varphi_{E, \ell}(G_K) \leq \left\{ \begin{pmatrix} * & 0 \\ 0 & * \end{pmatrix} \right\} <  \GL_2(\mbz/\ell\mbz).
\end{equation*}
We thus have $\left| \GL_2(\mbz/\ell\mbz) : \varphi_{E, \ell}(G_{\Q}) \right| \geq \ell(\ell+1) > 2$, contradicting \eqref{serrecurvedefatfinitelevel}.





(ii) We proceed by contradiction.
Suppose that $E(\Q)_{\text{tors}}$ is non-trivial. Then there exists a non-trivial  point $P \in E(\Q)[\ell]$ for some  rational prime $\ell$.
Extending the set $\{ P \}$ to an ordered $\mbz/\ell\mbz$-basis $\{ P, Q \}$ of $E[\ell]$ and writing linear transformations relative to this basis, we obtain
$$
\varphi_{E, \ell}(G_{\Q}) \leq \left\{ \begin{pmatrix} 1 & * \\ 0 & * \end{pmatrix} \right\} < \GL_2(\mbz/\ell\mbz).
$$
We thus have that $\left|\GL_2(\Z/\ell \Z) : \varphi_{E, \ell}(G_{\Q})\right| \geq (\ell + 1)(\ell - 1) > 2$,  contradicting \eqref{serrecurvedefatfinitelevel}.


(iii) Since the subgroup of $\GL_2(\hat{\Z})$ where $\chi_E$ and $\epsilon$ agree is already of index $2$, and since $E$ is a Serre curve,
 we must have equality in \eqref{serresubgroupexplicit}. 
The subgroup defined therein is determined by its image at level
\[
m_E = \lcm(2, | d_E | ) = \begin{cases}
2 | \gD_{\text{sf}}(E) | & \text{ if } \gD_{\text{sf}}(E) \equiv 1 (\mod4), \\
4 | \gD_{\text{sf}}(E) | & \text{ otherwise,}
\end{cases}
\] 
verifying \eqref{mESerre}.


(iv) Let $d \mid m_E$ and
denote by  $\pr_{m_E,d} : \GL_2(\Z/m_E\Z) \longrightarrow \GL_2 (\Z/d \Z)$ the canonical projection. 
Since 
$\left|\GL_2(\Z/m_E \Z) : \varphi_{E, m_E}(G_{\Q})\right| = 2$,
 it follows from the 
minimality of $m_E$ that 
$$
\varphi_{E,d}(G_\mbq)
=
\pr_{m_E,d}
\left(
\varphi_{E,m_E}(G_\mbq)
\right)
=
\left\{
\begin{array}{cc}
\text{index $2$ subgroup of $\GL_2(\Z/d\Z)$}
& \text{if $d = m_E$},
\\
\GL_2(\Z/m \Z)
& \text{if $d < m_E$}.
\end{array}
\right.
$$
 By Corollary \ref{mE-nonCM}, this proves \eqref{serreindex}.
\end{proof}

\subsection{Two-parameter families of elliptic curves}


\begin{lemma}\label{lemma-numcmcurves}
Let $A, B > 2$ and
consider the family 
${\mathcal{C}}(A, B)$
of $\Q$-isomorphism classes of elliptic curves 
$E = E_{a, b}$ defined by
$Y^2 = X^3 + aX +b$
with
$a, b \in \Z$
and
$|a| \leq A,
|b| \leq B$. 
Then
\begin{equation*}
\frac{1}{|\mathcal{C}(A, B)| }
\ds\sum_{
E \in {\mathcal{C}}(A, B)
\atop{E \; \text{CM}}
}
1
\ll
\frac{1}{A} + \frac{1}{B}.
\end{equation*}
More precisely,
$$
\#\{E \in {\mathcal{C}}(A, B): j(E) = 0
\}
\sim
\frac{2}{\zeta(6)} B,
$$
$$
\#\{E \in {\mathcal{C}}(A, B): j(E) = 1728
\}
\sim
\frac{2}{\zeta(4)} A,
$$
and, for each of the $j$-invariants of Corollary \ref{13j} with  $j \neq 0, 1728$, 
$$
\#\{E \in {\mathcal{C}}(A, B): j(E) = j
\}
\ll_{\varepsilon}
\min\left\{
A^{\frac{1}{2} + \varepsilon},
B^{\frac{1}{3} + \varepsilon}
\right\}
$$
for any $\varepsilon > 0$.
The $\ll$-constant is absolute, while the $\ll_{\varepsilon}$-constant depends on $\varepsilon$.
\end{lemma}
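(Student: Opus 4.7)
The plan is to enumerate CM elliptic curves in $\mathcal{C}(A,B)$ by stratifying according to their $j$-invariant. By Theorem \ref{ECM}(ii) together with Corollary \ref{13j}, any CM curve in $\mathcal{C}(A,B)$ has $j$-invariant in a fixed set of exactly thirteen integers, and the two exceptional ones $j \in \{0, 1728\}$ will be treated separately because they carry extra automorphisms. The first (aggregate) claim will follow from showing the total CM count is $O(A+B)$ and combining with the standard estimate $|\mathcal{C}(A,B)| \asymp AB$; the latter I will justify by observing that the number of integer pairs $(a,b)$ with $|a| \leq A$, $|b| \leq B$, and $\Delta(a,b) \ne 0$ is $\asymp AB$, and that each $\Q$-isomorphism class contributes only $O(1)$ such pairs in the box (the nontrivial $u \in \Q^\times$ giving integer models would require $n^4 \mid a$ and $n^6 \mid b$ with $n > 1$, which is sparse).

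For the two exceptional cases, I will use the formula $j(E_{a,b}) = 1728 \cdot 4a^3/(4a^3 + 27 b^2)$, which vanishes precisely when $a = 0$ and equals $1728$ precisely when $b = 0$. Curves with $j = 0$ therefore have the form $Y^2 = X^3 + b$, and by Proposition \ref{cohom} two such are $\Q$-isomorphic if and only if their $b$-values differ by a sixth power in $\Q^\times$. Thus isomorphism classes with $|b| \leq B$ are in bijection with nonzero sixth-power-free integers in $[-B, B]$; a standard M\"obius inversion (using $\sum_k \mu(k)/k^6 = 1/\zeta(6)$) yields the asymptotic $\sim \frac{2B}{\zeta(6)}$. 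The case $j = 1728$ is entirely parallel, with fourth-power-free integers $a$ and constant $1/\zeta(4)$ in place of $1/\zeta(6)$.

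For each of the remaining eleven $j$-invariants, I will fix an integer Weierstrass model $(a_0, b_0)$ with $j(a_0, b_0) = j$. Since $j \ne 0, 1728$, Proposition \ref{cohom} classifies all twists by $\Q^\times/(\Q^\times)^2$, so every isomorphism class in the $j$-fiber has a representative of the form $(a_0 d^2, b_0 d^3)$ for some nonzero squarefree $d \in \Z$. The constraints $|a_0 d^2| \leq A$ and $|b_0 d^3| \leq B$ force $|d| \ll \min(A^{1/2}, B^{1/3})$, so the count of such isomorphism classes is $\ll \min(A^{1/2}, B^{1/3})$, which is even stronger than the $\ll_\varepsilon \min(A^{1/2+\varepsilon}, B^{1/3+\varepsilon})$ bound claimed; the $\varepsilon$ absorbs any divisor-function losses that arise when one explicitly accounts for the $(a,b) \mapsto (u^4 a, u^6 b)$ action of $u \in \Q^\times$ on integer models.

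Summing these contributions yields a total CM count of $\sim \frac{2B}{\zeta(6)} + \frac{2A}{\zeta(4)} + O(\min(A^{1/2}, B^{1/3})) = O(A+B)$, and dividing by $|\mathcal{C}(A,B)| \asymp AB$ gives the ratio bound $O(1/A + 1/B)$. The main obstacle will be the careful bookkeeping in the ``other $j$'' case, ensuring that every integer representative of a given quadratic twist class lying in the box is captured by the parametrization $(a_0 d^2, b_0 d^3)$ up to the action of $\Q^\times$; once this parametrization is in place, the remaining count reduces to elementary estimates.
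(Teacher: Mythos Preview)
Your proposal is correct and, for $j \in \{0, 1728\}$ and for the aggregate bound, essentially coincides with the paper's argument. For the eleven $j$-invariants with $j \neq 0, 1728$, however, you take a genuinely different route. The paper does not parametrize by the twist parameter $d$; instead it sets $c(j) := \frac{4}{27}(1728/j - 1)$, observes that $j(a,b) = j$ is equivalent to $c(j) a^3 = b^2$, and directly bounds the number $N_j(A)$ of integers $|a| \leq A$ with $c(j) a^3$ a perfect square. The key step there is that any two solutions $a_1, a_2$ satisfy $a_1^3 = u^2 a_2^3$ for some $u \in \Q^\times$, hence $u = v^3$ and $a_1 = \pm v^2 a_2$; writing $v = n/d$ in lowest terms forces $d \mid a_2$, and the divisor bound $\tau(a_2) \ll_{\varepsilon} |a_2|^{\varepsilon}$ then yields $N_j(A) \ll_{\varepsilon} A^{1/2+\varepsilon}$ (and symmetrically for $B$). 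Your approach, by contrast, uses Proposition~\ref{cohom} to parametrize the $\Q$-isomorphism classes by squarefree $d$ via $(a_0 d^2, b_0 d^3)$, and then bounds $|d|$ directly. This is conceptually cleaner and, once you verify that the minimal integral model in each twist class has $|a| \gg_j d^2$ (which follows since the denominator of any $u$ with $u^4 a_0 d^2, u^6 b_0 d^3 \in \Z$ is bounded purely in terms of $a_0, b_0$), it gives the sharper bound $\ll_j \min(A^{1/2}, B^{1/3})$ with no $\varepsilon$-loss. The paper's argument buys you an entirely elementary treatment that avoids any appeal to the twist classification or to minimal models, at the cost of the $\varepsilon$ from the divisor function.
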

\begin{proof}
We recall that associated to an elliptic curve $E_{a, b}/\Q$, and in particular to a Weierstrass equation $Y^2 = X^3 + a X + b$, we have the $j$-invariant
$
j(a, b) := 1728 \frac{4 a^3}{4a^3 + 27b^2},
$
which encodes the $\ol{\Q}$-isomorphism class of $E_{a, b}$:
two elliptic curves $E_{a, b}/\Q$, $E_{a', b'}/\Q$ are $\ol{\Q}$-isomorphic if and only if $j(a, b) = j(a', b')$;
furthermore, $E_{a, b}/\Q$, $E_{a', b'}/\Q$ are $\Q$-isomorphic
 if and only if
\begin{equation}\label{isom-abu}
\exists  \; u \in {\Q}^{\times}
\;
\text{such that}
\;
a = u^4 a'
\;
\text{and}
\;
b = u^6 b'.
\end{equation}
\noindent

In view of the above and of Corollary \ref{13j}, it suffices to estimate the cardinality of 
$$
{\mathcal{C}}_{j}(A, B) :=
\left\{
(a, b) \in \Z \times \Z:
|a| \leq A, |b| \leq B,  
\Delta(a, b) \neq 0,
\gcd\left(a^3, b^2\right)
\, \text{12th power free},
\
j(a, b) = j
\right\}
$$
for each of the 13 occurring $j$-invariants. We will consider the cases $j = 0$, $j = 1728$, and $j \neq 0, 1728$ separately.

Note that $j (a, b) = 0$ is equivalent to $a = 0$. Thus
\begin{eqnarray}\label{j0}
\left|{\mathcal{C}}_{0}(A, B)\right|
=
\#
\left\{
b \in \Z \backslash \{0\}:
b \; \text{6th power free},
 |b| \leq B
\right\}
\sim
\frac{2}{\zeta(6)} B
\end{eqnarray}
(see \cite[p.355 ]{HaWr} for a standard approach towards such asymptotics).

Similarly, note that $j (a, b) = 1728$ is equivalent to $b = 0$. Thus
\begin{eqnarray}\label{j1728}
\left|{\mathcal{C}}_{1728}(A, B)\right|
=
\#
\left\{
a \in \Z \backslash \{0\}:
a \; \text{4th power free},
 |a| \leq A
\right\}
\sim
\frac{2}{\zeta(4)} A.
\end{eqnarray}

Now let us fix $j \neq 0, 1728$. Setting
$$
c(j) := \frac{4}{27} \left(\frac{1728}{j} - 1\right) \in \Q^{\times} \backslash \{0\},
$$
$$
N_j(A) 
:=
\#\left\{
a \in \Z \backslash \{0\}:
|a| \leq A,
c(j) a^3 = \beta^2
\; \text{for some} \; \beta \in \Z \backslash \{0\}
\right\}
$$
and 
$$
N_j(B)
:=
\#\left\{
b \in \Z \backslash \{0\}:
|b| \leq B,
\frac{1}{c(j)} b^2 = \alpha^3
\; \text{for some} \; \alpha \in \Z \backslash \{0\}
\right\},
$$
and noting that
\[
1728 \frac{4a^3}{4a^3 + 27b^2} = j \; \Longleftrightarrow \; a^3 c(j) = b^2,
\]
we obtain
\begin{equation}\label{CjAB}
\left|
{\mathcal{C}}_{j}(A, B)
\right|
\leq
\min \left\{
N_j(A), N_j(B)
\right\}.
\end{equation}

To estimate $N_j(A)$, let $a_1$, $a_2$ be two distinct integers counted in the set giving rise to $N_j(A)$.
There exist exactly two rational numbers $u = u(a_1, a_2) \in \Q^{\times}$ such that
\begin{equation}\label{a1a2u}
a_1^3 = u^2 a_2^3,
\end{equation}
in particular such that $u^2 a_2^3$ is a cube in $\Q^{\times}$. Consequently, there exists a unique $v = v(u) \in \Q^{\times}$ such that
$u = v^3$, thus such that
\begin{equation}\label{a1a2v}
a_1^3 = v^6 a_2^3.
\end{equation}

Recalling that $|a_1| \leq A$ and $a_2 \in \Z \backslash \{0\}$, we deduce that
$$
|v| \leq \sqrt{\frac{A}{|a_2|}} \leq \sqrt{A}.
$$
Let us write $v = \frac{n}{d}$ for uniquely determined coprime  $n, d \in \Z \backslash \{0\}$. On one hand, the above bound on $|v|$ gives
$$
|n| \leq |d| \sqrt{A}.
$$
On the other hand, (\ref{a1a2v}) gives
$d^6 a_1^3 = n^6 a_2^3$,
hence 
$$d \mid a_2.$$
Consequently, the number of $v \in \Q^{\times}$ satisfying (\ref{a1a2u}) is
$$
\ll 
\tau(a_2) \sqrt{A} 
\ll_{\varepsilon} 
A^{\frac{1}{2} + \varepsilon}
$$
for any $\varepsilon > 0$
(see \cite[Thm. 315 p. 343]{HaWr} for the standard upper bound  $\tau(a_2) \ll_{\varepsilon} a_2^{\varepsilon}$),
which implies that
\begin{equation}\label{NjA}
N_j(A) \ll_{\varepsilon} A^{\frac{1}{2} + \varepsilon}.
\end{equation}
Similarly,
\begin{equation}\label{NjB}
N_j(B) \ll_{\varepsilon} B^{\frac{1}{3} + \varepsilon}.
\end{equation}
Thus, by (\ref{CjAB}),
$$
\left|
{\mathcal{C}}_{j}(A, B)
\right|
\ll_{\varepsilon}
\min \left\{
A^{\frac{1}{2} + \varepsilon},
B^{\frac{1}{3} + \varepsilon}
\right\}
$$
for any $\varepsilon > 0$.
Combining this with (\ref{j0}) and (\ref{j1728}) completes the proof of the lemma.
\end{proof}

\begin{theorem} \label{serre-curves} (\cite[Thm. 4]{Jo10})

\noindent
Let $A, B > 2$ and
consider the family 
${\mathcal{C}}(A, B)$
of $\Q$-isomorphism classes of elliptic curves 
$E = E_{a, b}$ defined by
$Y^2 = X^3 + aX +b$
with
$a, b \in \Z$
and
$|a| \leq A,
|b| \leq B$. 
Then there exists a positive absolute constant $\gamma'$ such that
\[ 
\frac{1}{|\mathcal{C}(A, B)| }
\ds\sum_{E \in {\mathcal{C}}(A, B) \atop E \text{ is not a Serre curve}} 
1 
\ll 
\frac{(\log \min\{A, B\})^{\gamma'}}{\sqrt{\min\{A, B\}}},
\]
where the $\ll$-constant is  absolute.
\end{theorem}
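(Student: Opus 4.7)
The plan is to reduce the estimate to counting, for each ``obstruction'' to being a Serre curve, the number of pairs $(a,b)$ in the box $|a| \leq A$, $|b| \leq B$ that exhibit it, and then to sum over obstructions. By Proposition \ref{mESerreProp}(i) together with Lemma \ref{lemma-numcmcurves}, the CM curves contribute at most $\ll 1/\min\{A,B\}$ to the ratio and can be ignored. Among non-CM curves, $E_{a,b}$ fails to be a Serre curve precisely when equality fails in the containment \eqref{serresubgroupexplicit}, and by Corollary \ref{mE-nonCM} this failure must be witnessed by one of: (a) non-surjectivity of $\varphi_{E,\ell}$ at some prime $\ell$, or (b) an unexpected entanglement $\Q(E[\ell_1]) \cap \Q(E[\ell_2])$ larger than the intersection forced by \eqref{Serrecontainments}.

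Next I would fix a threshold $T = T(A,B)$, to be taken of the form $(\log \min\{A, B\})^{\gamma_1}$. Using Proposition \ref{mEboundlemma} together with the Masser--W\"ustholz effective open image bound, if $E_{a,b}$ is non-CM and $\ell \nmid m_E$, then $\varphi_{E,\ell}$ is surjective; moreover the primes dividing $m_E$ are, apart from those dividing $\rad(|\Delta(a,b)|)$, bounded polylogarithmically in $\max\{|a|^3, |b|^2\}$. A standard divisor-sum argument shows that the pairs $(a,b)$ for which $\rad(|\Delta(a,b)|)$ introduces primes beyond $T$ that actually enlarge the index contribute a negligible amount. Thus only the primes $\ell \leq T$ remain relevant.

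For each $\ell \leq T$ and each proper maximal subgroup $H < \GL_2(\Z/\ell\Z)$, the condition $\varphi_{E_{a,b}, \ell}(G_\Q) \leq H$ says that $(a,b)$ corresponds to a rational point on the modular curve $X_H/\Q$ in the standard Weierstrass coordinates. Adapting the sieve of Duke, one counts such pairs by pulling back to $X_H$ and estimating integer points of bounded height. For Borel and normalizer-of-Cartan subgroups the genus $0$ and genus $1$ cases give
\begin{equation*}
\#\{(a,b) : |a| \leq A, \, |b| \leq B, \, \varphi_{E_{a,b}, \ell}(G_\Q) \leq H\} \ll \ell^{O(1)} \sqrt{AB},
\end{equation*}
and the exceptional subgroups arising from $A_4, S_4, A_5$ in $\mathrm{PGL}_2(\Z/\ell\Z)$ contribute at most comparably. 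A parallel analysis at level $\ell_1 \ell_2$ handles the entanglement case. Summing over $\ell, \ell_1, \ell_2 \leq T$ and dividing by $|\mathcal{C}(A,B)| \asymp AB$ yields the claimed bound $\ll T^{O(1)}/\sqrt{\min\{A,B\}}$, which for a suitable absolute $\gamma'$ is $\ll (\log \min\{A,B\})^{\gamma'}/\sqrt{\min\{A,B\}}$.

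The main obstacle is the per-$\ell$ rational-point bound on $X_H$ with \emph{polynomial} dependence on $\ell$. For Borel subgroups one uses the explicit parametrization of $X_0(\ell)$; for Cartan-type subgroups one must invoke more delicate results, culminating in the Bilu--Parent--Rebolledo theorem for $X_{\mathrm{sp}}^+(\ell)$ and known bounds for $X_{\mathrm{nsp}}^+(\ell)$. A secondary subtlety is verifying that the inventory of obstructions listed above is complete, that is, that no further global entanglement beyond those compatible with \eqref{serresubgroupexplicit} can prevent equality; this is guaranteed by the group-theoretic uniqueness of the Serre obstruction as the non-trivial element of the relevant cohomology group identified in Lemma \ref{serre-subgroup}.
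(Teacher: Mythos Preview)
The paper does not prove Theorem~\ref{serre-curves}; it is stated as a citation of \cite[Thm.~4]{Jo10} and used as a black box in Section~5. There is therefore no proof in the present paper to compare your sketch against.

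For what it is worth, your outline is broadly in the spirit of Jones' argument in \cite{Jo10}, but several details are off. The characterization of non-Serre curves there is sharper than you indicate: a non-CM curve $E/\Q$ is a Serre curve if and only if $\varphi_{E,\ell}$ is surjective for every prime $\ell$ \emph{and} a single condition at a fixed bounded level (dividing $36$) holds; so your case (b) is one finite congruence check, not a family indexed by prime pairs $\ell_1,\ell_2$. More importantly, the per-$\ell$ count in \cite{Jo10} is not obtained by estimating rational points on the modular curves $X_H$, and Bilu--Parent--Rebolledo (which postdates \cite{Jo10}) is neither invoked nor needed. Rather, one combines the Masser--W\"ustholz/Zywina bound on the largest non-surjective prime with the large sieve \`a la Duke and Grant: non-surjectivity of $\varphi_{E,\ell}$ forces the Frobenius traces $a_p(E)$ to satisfy congruence constraints modulo $\ell$ for all good primes $p$, and this is what the sieve exploits. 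Your displayed bound $\ll \ell^{O(1)}\sqrt{AB}$, justified via ``genus $0$ and genus $1$ cases'' of $X_H$, does not match either route and would require substantial further argument.
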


\bigskip
\section{Constants for non-Serre curves: proof of Theorem \ref{constant-nonSerre}}

In this section we prove upper bounds for  the constants appearing in Conjectures \ref{d-one} - \ref{d-two}, as stated in Theorem \ref{constant-nonSerre}.
The key ingredients 
in the proof 
are 
Corollaries \ref{mE-CM} and \ref{mE-nonCM}, Lemma \ref{vertical-division}, Proposition \ref{mEboundlemma},
 and the following lemma about arithmetic functions: 

\begin{lemma}\label{lemma-mult-2}
Let 
$f, g : \N \backslash \{0\} \longrightarrow \mbc^\times$ 
be arithmetic functions satisfying:
\begin{enumerate}
\item[(i)]
$g$ is multiplicative;
\item[(ii)]
$\ds\sum_{m \geq 1} |g(m)|$ converges.
\end{enumerate}
Assume that 
$\exists \; M \in \N \backslash \{0\}$
and
$\exists \; \kappa \in \C$ with $\Re \kappa>0 $
such that:
\begin{enumerate}
\item[(iii)]
$\forall \; m_1 \mid M^\infty$
and
$\forall m_2$ with $\gcd(m_2, M)=1$,
we have
$f(m_1 m_2) = f(m_1) g(m_2)$;
\item[(iv)]
$\forall \; d \mid M^{\infty}$,
$\forall \; \ell \mid M$ with $\ell \nmid d$,
and 
$\forall \; \delta \in \N$,
we have
$f(\ell^{v_{\ell}(M) + \delta} d) = \ell^{-\delta \kappa} f(\ell^{v_{\ell}(M)} d)$. 
\end{enumerate}
Then
\[
\ds\sum_{m\geq 1} 
|f(m)|
\leq 
\ds\prod_{\ell \mid M} \left( 1 - \frac{1}{\left|\ell^{\kappa}\right|} \right)^{-1}
\left(
\ds\sum_{m \mid M} |f(m)|
\right)
\left(
\ds\sum_{m \geq 1} |g(m)|
\right).\]
\end{lemma}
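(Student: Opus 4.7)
The plan is to reduce the sum over all $m \geq 1$ to a product of two sums by splitting each $m$ according to whether its prime factors divide $M$ or not, then handle the "bad" part by iteratively applying hypothesis (iv).

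First, I would use hypothesis (iii) to write any $m \geq 1$ uniquely as $m = m_1 m_2$ with $m_1 \mid M^{\infty}$ and $\gcd(m_2, M) = 1$, so that $|f(m_1 m_2)| = |f(m_1)| \, |g(m_2)|$. This factorization yields
\[
\sum_{m \geq 1} |f(m)| = \Bigl(\sum_{m_1 \mid M^{\infty}} |f(m_1)|\Bigr) \Bigl(\sum_{\gcd(m_2, M) = 1} |g(m_2)|\Bigr) \leq \Bigl(\sum_{m_1 \mid M^{\infty}} |f(m_1)|\Bigr) \Bigl(\sum_{m \geq 1} |g(m)|\Bigr),
\]
where the inequality simply drops the coprimality restriction on the $g$-sum.

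Next, the main bookkeeping step: bounding $\sum_{m_1 \mid M^{\infty}} |f(m_1)|$ by $\prod_{\ell \mid M}(1 - |\ell^{\kappa}|^{-1})^{-1} \sum_{m \mid M} |f(m)|$. For each $m_1 \mid M^{\infty}$, I would define $m_1^{\mathrm{base}} := \prod_{\ell \mid M} \ell^{\min(v_{\ell}(m_1), v_{\ell}(M))}$, which divides $M$, and record the "excess" $\delta_{\ell} := v_{\ell}(m_1) - v_{\ell}(M) \geq 0$ at those primes $\ell \mid M$ for which $v_{\ell}(m_1) \geq v_{\ell}(M)$. Applying hypothesis (iv) once per such prime (which is legitimate since at each step the relevant auxiliary divisor $d$ continues to satisfy $d \mid M^\infty$ and $\ell \nmid d$), I get $|f(m_1)| = |f(m_1^{\mathrm{base}})| \prod_{\ell} |\ell^{\kappa}|^{-\delta_{\ell}}$. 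Reversing the order of summation so that $m_1^{\mathrm{base}}$ ranges over divisors of $M$ and the $\delta_{\ell}$'s range freely in $\N$, the resulting geometric sums produce
\[
\sum_{m_1 \mid M^\infty} |f(m_1)| \;=\; \sum_{m_0 \mid M} |f(m_0)| \prod_{\ell \in S(m_0)} \bigl(1 - |\ell^{\kappa}|^{-1}\bigr)^{-1},
\]
where $S(m_0) := \{\ell \mid M : v_{\ell}(m_0) = v_{\ell}(M)\}$ records the primes at which excess is permitted.

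Finally, since $\Re \kappa > 0$ implies $|\ell^{\kappa}| = \ell^{\Re \kappa} > 1$ for every rational prime $\ell \geq 2$, each factor $(1 - |\ell^{\kappa}|^{-1})^{-1}$ exceeds $1$, so enlarging the product from $S(m_0)$ to the full set of primes dividing $M$ only increases it. Pulling this uniform product out of the sum over $m_0 \mid M$ and combining with the first step yields the claimed inequality. The only real obstacle is making the iterative peeling in the second step rigorous — specifically, checking that after each application of (iv) the remaining divisor still satisfies the hypothesis's shape $\ell^{v_{\ell}(M)} d$ with $d \mid M^{\infty}$ and $\ell \nmid d$ — which is straightforward by induction on the number of primes at which the exponent of $m_1$ exceeds that of $M$.
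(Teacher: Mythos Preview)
Your proposal is correct and follows essentially the same approach as the paper's proof: both split off the part of $m$ coprime to $M$ via (iii), then handle $\sum_{m_1\mid M^\infty}|f(m_1)|$ by partitioning each $m_1$ into a ``base'' divisor of $M$ together with excess exponents $\delta_\ell$ at the primes where $v_\ell(m_1)\ge v_\ell(M)$, applying (iv) to peel off these excesses as geometric factors. The paper organizes the same computation by first fixing the subset $I$ of primes where the exponent meets or exceeds $v_\ell(M)$ and then summing over the remaining part $d\mid M_I$; your map $m_1\mapsto (m_1^{\mathrm{base}},(\delta_\ell)_{\ell\in S(m_1^{\mathrm{base}})})$ is exactly the inverse of that parametrization, so the two arguments are the same up to bookkeeping.
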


\begin{proof}
By (iii) we have the almost-product formula
$$
\ds\sum_{m\geq 1} |f(m)| =  
\left( \ds\sum_{m \mid M^\infty} |f(m)| \right)
\left( \ds\prod_{\ell \nmid M} g_{\ell} \right),
$$
where,
for each rational prime $\ell$, 
$g_\ell := \ds\sum_{r\geq 0} |g(\ell^r)| = 1 + \sum_{r \geq 1} |g(\ell^r)|$.
Since
\[
\ds\prod_{\ell \nmid M} g_{\ell} \leq \ds\prod_\ell  g_{\ell} = \ds\sum_{m \geq 1} |g(m)|,
\]
it remains to bound $\ds\sum_{m \mid M^\infty} |f(m)|$.

Let $M = \ell_1^{\alpha_1} \ldots \ell_n^{\alpha_n}$ be the prime factorization of $M$. 
For each subset $I \subseteq \{1, \ldots, n\}$, possibly empty, define
$$
M_I :=
\ds\prod_{i \in \{1, \ldots, n\}   \backslash I  }
\ell_i^{\alpha_i - 1},
$$
and for each $m \mid M^{\infty}$, whose unique prime factorization we write as 
$m = \ell_1^{\beta_1} \ldots \ell_n^{\beta_n}$
with $\beta_1 \geq 0, \ldots \beta_n \geq 0$, define
$$
I_m := \{1 \leq i \leq n: \beta_i \geq \alpha_i\}.
$$
Partitioning the integers $m \mid M^{\infty}$ according to the subsets $I_m \subseteq \{1, \ldots, n\}$
and 
using  property (iv) of $f$, we obtain
\begin{eqnarray*}
\ds\sum_{m \mid M^{\infty}} |f(m)|
&=&
\ds\sum_{I \subseteq \{1, \ldots, n\}}
\ds\sum_{
m \mid M^{\infty}
\atop{
I_m = I
}
}
|f(m)|
\\
&=&
\ds\sum_{I \subseteq \{1, \ldots, n\} }
\ds\sum_{d \mid M_I}
\ds\sum_{
(\delta_i)_{i \in I} 
\atop{
\delta_i \in  \N \; \forall i}
 }
\left|
f\left(
\ds\prod_{i \in I}
\ell_{i}^{\alpha_i + \delta_i} d
\right)
\right|
\\
&=&
\ds\sum_{I \subseteq \{1, \ldots, n\} }
\left(
\ds\sum_{d \mid M_I}
\left|
f\left(
\ds\prod \ell_i^{\alpha_i} d
\right)
\right|
\right)
\left(
\ds\prod_{i \in I}
\ds\sum_{\delta_i \in \N}
\left|
\frac{1}{\ell_i^{\kappa \delta_i}}
\right|
\right)
\\
&\leq&
\ds\prod_{\ell \mid M} \left( 1 - \frac{1}{\left| \ell^{\kappa}\right|} \right)^{-1}
\ds\sum_{m \mid  M} |f(m)|.
\end{eqnarray*}
This completes the proof.
\end{proof}


\bigskip

\noindent
{\bf{Proof of part (i) of Theorem \ref{constant-nonSerre}}}.


\noindent
We begin by using Lemma \ref{lemma-mult-2} to verify that, 
for the arbitrary elliptic curve $E/\Q$ with complex multiplication by the order $O$ in the imaginary quadratic field $K$, we have
\begin{equation} \label{sumoverKdiv}
\sum_{m \geq 1} \frac{1}{[ K(E[m]) : K ]} < \infty.
\end{equation}
By Corollary \ref{mE-CM}, for any integer $m \geq 1$, written uniquely as $m = m_1 m_2$ with 
$m_1 \mid m_E^{\infty}$
and
$(m_2, m_E) = 1$,
we have
\begin{equation}\label{CM-vertical}
[K(E[m]) : K] = [K(E[m_1]) : K] \cdot \Phi_{O}(m_2),
\end{equation}
where $\Phi_O$ denotes the Euler function on $O$, that is, $\Phi_O(n) := \left|(O/n O)^{\times}\right|$ for any positive integer $n$.
Furthermore,  
by part (i) of  Lemma \ref{vertical-division},
for any $\ell$, $d$ with
$\ell \mid m_E$,
$d \mid m_E$, 
 $\ell \nmid d$,
 and for any $\delta \in \N$, we also have
\begin{eqnarray}\label{kappa-CM}
\left[
K\left(
E\left[
\ell^{v_{\ell}(m_E) + \delta} 
\ d
\right]
\right) 
:
K\left(
E\left[
\ell^{v_{\ell}(m_E)}
\  d
\right]
\right) 
\right]
= \ell^{2 \delta}.
 \end{eqnarray}
 Defining 
 $$f(m) := \frac{1}{[K(E[m]) : K]}, \; g(m) := \frac{1}{ \Phi_{O}(m)},$$
we  want to apply Lemma \ref{lemma-mult-2} with $M= m_E$ and $\kappa = 2$.

Condition (i) follows from the Chinese Remainder Theorem.  Indeed, if $m$ and $m'$ are relatively prime positive integers, then the principal ideals $mO$ and $m'O$ are co-maximal in the ring $O$, and so, by the Chinese Remainder Theorem, we have that 
\[
\Phi_{O}(mm') := |(O/mm'O)^\times| = |(O/mO)^\times \times (O/m'O)^\times| = \Phi_{O}(m) \Phi_{O}(m'),
\]
from which it follows immediately that $g$ is multiplicative. 
Condition (ii) follows from the convergence of the series
$\ds\sum_{m\geq 1} \frac{1}{\phi(m)^2}$
since 
$$\Phi_{O}(m)\geq \phi(m)^2 \quad \forall  m \geq 1.$$
To justify the above inequality, note that 
 $$\Phi_O(\ell^r) = \ell^{2(r-1)} \Phi_O(\ell) \quad \forall r \geq 1$$
and
$$
\Phi_O(\ell)
= 
\left\{
\begin{array}{cc}
(\ell + 1)(\ell - 1)
& \text{if $\ell$ is inert in $O$},
\\
(\ell-1)^2
& \text{if $\ell$ splits in $O$},
\\
\ell (\ell - 1)
& \text{if $\ell$ ramifies in $O$},
\end{array}
\right.
$$
hence 
$$\Phi_O(\ell^r) \geq \phi(\ell^r)^2 \quad \forall  r \geq 1.$$
Conditions (iii) and (iv) follow from (\ref{CM-vertical}) - (\ref{kappa-CM}).

By  Lemma \ref{lemma-mult-2},
we obtain
\begin{eqnarray}\label{sum-over-K}
\ds\sum_{m \geq 1} \frac{1}{[K(E[m]) : K]}
&\leq&
\zeta(2)
\left(
\ds\sum_{m | m_E} \frac{1}{[K(E[m]) : K]}
\right)
\left(
\ds\sum_{m \geq 1}
\frac{1}{\Phi_{O}(m)}
\right) 
\nonumber
\\
&\leq&
\zeta(2)
\left(
\ds\sum_{m | m_E} \frac{1}{[K(E[m]) : K]}
\right)
\left(
\ds\sum_{m \geq 1}
\frac{1}{\phi(m)^2}
\right)
<
\infty.
\end{eqnarray}

Having established  \eqref{sumoverKdiv}, our goal is now to bound the sum 
$\ds\sum_{m \geq 1} \frac{1}{[K(E[m]) : K]}$
absolutely. 
To this end, let $j_1, \ldots, j_{13}$ be the $j$-invariants of  all elliptic curves over $\Q$
 with complex multiplication, as per Corollary \ref{13j}. 
For each $j_i$, choose an elliptic curve $E_i/\Q$ with $j(E_i) = j_i$ and denote by $K_i$ its complex multiplication field.
(Recalling Theorem \ref{ECM}, note that
some of the fields $K_1, \ldots, K_{13}$ occur with multiplicity since only nine of them are distinct.)  
Also note that, since (\ref{sumoverKdiv}) holds for any elliptic curve over $\Q$ with complex multiplication, it holds for each $E_i/\Q$. 
Thus we may
 define
\begin{equation}
B_{\tau(d_1)} :=
\max_{1 \leq i \leq 13}
\ds\sum_{m \geq 1}
\frac{1}{
[K_i(E_i[m]) : K_i]
},
\end{equation}
which is an absolute constant.

Now fix one of the fields $K_i$ and let $E_i'/\Q$ be an elliptic curve with $j(E_i') = j_i$.
By Corollary \ref{twists},
there exists a number field $L_i$ such that $[L_i : \Q] \leq 6$
and $E_i' \simeq_{L_i}  E_i$. Moreover, we claim that, for any integer $m \geq 1$, 
\begin{equation}\label{twist-degree}
\frac{1}{6}
[K_i(E_i[m]) : K_i]
\leq
[
L_i K_i (E_i[m]) : L_i K_i
]
\leq
[
K_i (E_i'[m]) : K_i
].
\end{equation}
Indeed, we have
\begin{eqnarray*}
[K_i(E_i[m]) : K_i]
&=&
[K_i(E_i[m]) : L_i K_i \cap K_i(E_i[m])]
\cdot
[L_i K_i \cap K_i(E_i[m]) : K_i]
\\
&=&
[L_i K_i (E_i[m]) : L_i K_i]
\cdot
[L_i K_i \cap K_i(E_i[m]) : K_i]
\\
&\leq&
[L_i K_i (E_i[m]) : L_i K_i]
\cdot
[L _i: \Q]
\\
&\leq&
6
[L_i K_i (E_i[m]) : L_i K_i]
\end{eqnarray*}
and
\begin{eqnarray*}
[L_i K_i (E_i[m]) : L_i K_i]
=
[L_i K_i (E_i'[m]) : L_i K_i]
=
[K_i (E_i'[m]) : L_i K_i \cap K_i(E_i'[m])] 
\leq
[K_i(E_i'[m]) : K_i].
\end{eqnarray*}

We deduce from (\ref{twist-degree}) that
 for any elliptic curve $E'/\Q$ with complex multiplication, say by $K'$, we have
$$
\ds\sum_{m \geq 1} \frac{1}{[K'(E'[m]) : K']}
\leq
6 B_{\tau(d_1)}.
$$
In particular, this holds for the elliptic curve $E/\Q$, and so

\begin{equation}\label{refined-sum-over-K}
\ds\sum_{m \geq 1} \frac{1}{[K(E[m]) : K]}
\leq
6 B_{\tau(d_1)}.
\end{equation}

To relate $\ds\sum_{m \geq 1} \frac{1}{[K(E[m]) : K]}$ to the constants
$C_{d_2}(E)$ and $C_{\tau(d_1)}(E)$, note that 
\[
[ K(E[m]) : K ] \leq [ \mbq(E[m]) : \mbq ]  \quad \forall  m \geq 1.
\]
Thus
\begin{equation*}
\begin{split}
 C_{d_2}(E) := 
 \ds\sum_{m \geq 1} \frac{(-1)^{\omega(m)}\phi(\rad m)}{m [\Q(E[m]):\Q]}
 &\leq 
 \ds\sum_{m \geq 1} \frac{1}{[\Q(E[m]) : \Q]}  \; \; \; \; \left(= C_{\tau(d_1)}(E) \right) \\
 &\leq
\ds\sum_{m \geq 1} \frac{1}{[K(E[m]) : K]}.
\end{split}
\end{equation*}
Using  (\ref{refined-sum-over-K}), we  deduce that the constants  $C_{d_2}(E) \leq C_{\tau(d_1)}(E) $ are absolutely bounded
by $6 B_{\tau(d_1)}$.

\bigskip


To bound 
\[
C_{d_1}(E) := C_{d_1, \text{CM}}(E) = \lim_{\gs \rightarrow 0^+} \left( \gs \sum_{m \geq 1} \frac{\phi(m)}{[ \mbq(E[m]) : \mbq ]} m^{-\gs} \right), 
\]
we proceed much as above, as follows.
Fix an arbitrary complex number $s = \sigma + it$ with $\sigma > 0$ 
and
define 
$$\ds f_s(m) := \frac{\phi(m)}{[K(E[m]) : K]}m^{-s}, \; \ds g_s(m) := \frac{\phi(m)}{ \Phi_{O}(m)}m^{-s}.$$
We want to apply Lemma \ref{lemma-mult-2} with $M= m_E$ and $\kappa_s = 1 + s$.

Condition (i) follows from the observation that a product of multiplicative functions is multiplicative.
Condition (ii)  follows from the calculation

\begin{eqnarray*}
\ds\sum_{m \geq 1} |g_s(m)|
&=&
\sum_{m \geq 1}
\frac{\phi(m)}{ \Phi_{O}(m)}m^{-\sigma}
\\
&=&
\prod_\ell
\left(
1 - \frac{\chi_{O}(\ell)}{\ell}
\right)^{-1}
\left(
1 - \frac{1}{\ell^{1 + \sigma}}
\right)^{-1}
\\
&=&
L(1,\chi_{O})\zeta(1+\sigma) < \infty,
\end{eqnarray*}
where $\chi_{O}$ is the character defined by
$$
\Phi_{O}(\ell^r) = \ell^{2r} \left(1 - \frac{1}{\ell}\right) \left(1 - \frac{\chi_{O}(\ell)}{\ell}\right)
$$
and
$$
L(1, \chi_{O}) = \ds\prod_{\ell} \left(1 - \frac{\chi_{O}(\ell)}{\ell}\right)^{-1}.
$$
Conditions (iii) and (iv) follow, as before, from (\ref{CM-vertical}) - (\ref{kappa-CM}). 

By Lemma \ref{lemma-mult-2},
we obtain 
\begin{eqnarray*}
\sum_{m \geq 1} \left| \frac{\phi(m)}{[K(E[m]) : K]}m^{-s} \right|& = & \sum_{m \geq 1} \frac{\phi(m)}{[K(E[m]) : K]}m^{-\gs}
\ds
\nonumber
\\
&\leq&
\ds\prod_{\ell \mid m_E} \left( 1 - \frac{1}{\ell^{1+\sigma}} \right)^{-1}
\left(
\ds\sum_{m \mid m_E} \frac{\phi(m)}{[K(E[m]) : K]}m^{-\sigma}
\right)
\left(
\ds\sum_{m \geq 1}
\frac{\phi(m)}{\Phi_{O}(m)}m^{-\sigma}
\right) \\
&<& \infty.
\end{eqnarray*}
Furthermore, using 
$$\ds \lim_{\gs \rightarrow 0^+} \gs \sum_{m \geq 1} \frac{\phi(m)}{\Phi_{O}(m)}m^{-\sigma} = L(1,\chi_{O}) \lim_{\gs \rightarrow 0^+} \gs \zeta(1+\gs) = L(1,\chi_{O}),$$
we find that
\begin{equation*} \label{sum-over-K-d1}
\lim_{\gs \rightarrow 0^+} \gs \sum_{m \geq 1} \frac{\phi(m)}{[K(E[m]) : K]}m^{-\gs} \leq
\prod_{\ell \mid m_E} \left( 1 - \frac{1}{\ell} \right)^{-1}
\left(
\sum_{m \mid m_E} \frac{\phi(m)}{[K(E[m]) : K]}
\right)
L(1,\chi_{O}). 
\end{equation*}

Note that this inequality holds for any elliptic curve $E/\Q$ with complex multiplication. In particular, 
if we let $j_1, \ldots, j_{13}$ be  the $j$-invariants of  elliptic curves over $\Q$
 with complex multiplication, as per Corollary \ref{13j}, 
 and if, as earlier, for each $1 \leq i \leq 13$ we choose an elliptic curve 
 $E_i/\Q$ with 
 $j(E_i) = j_i$
 and
 $\End_{\overline{\Q}}(E_i) \simeq O_i \not\simeq \Z$,
 denoting by $K_i$ the complex multiplication field of $E_i$
 and applying (\ref{sum-over-K-d1}),
 we obtain that
\begin{equation*}\label{sum-over-K-d1-Ei}
\lim_{\gs \rightarrow 0^+} \gs \sum_{m \geq 1} \frac{\phi(m)}{[K_i(E_i[m]) : K_i]}m^{-\gs} \leq
\prod_{\ell \mid m_{E_i}} \left( 1 - \frac{1}{\ell} \right)^{-1}
\left(
\sum_{m \mid m_{E_i}} \frac{\phi(m)}{[K_i(E_i[m]) : K_i]}
\right)
L(1,\chi_{O_i}).
\end{equation*}

Recalling (\ref{twist-degree}) and arguing as in the first part of the proof, we obtain that 
\begin{equation*}
\lim_{\gs \rightarrow 0^+} \gs \sum_{m \geq 1} \frac{\phi(m)}{[K(E[m]) : K]}m^{-\gs} \leq
6 \max_{1 \leq i \leq 13}
\prod_{\ell \mid m_{E_i}} \left( 1 - \frac{1}{\ell} \right)^{-1}
\left(
\sum_{m \mid m_{E_i}} \frac{\phi(m)}{[K_i(E_i[m]) : K_i]}
\right)
L(1,\chi_{O_i}),
\end{equation*}
which, as before, gives
\begin{equation*} 
C_{d_1}(E)
\leq
6 \max_{1 \leq i \leq 13}
\prod_{\ell \mid m_{E_i}} \left( 1 - \frac{1}{\ell} \right)^{-1}
\left(
\sum_{m \mid m_{E_i}} \frac{\phi(m)}{[K_i(E_i[m]) : K_i]}
\right)
L(1,\chi_{O_i}). 
\end{equation*}
This completes the proof of part (i) of Theorem \ref{constant-nonSerre}. 
 \hfill $\square$

\bigskip

\noindent
{\bf{Proof of part (ii) of Theorem \ref{constant-nonSerre}}}.

\noindent
By Corollary \ref{mE-nonCM}, for any integer $m \geq 1$, written uniquely as $m = m_1 m_2$ with 
$m_1 \mid m_E^{\infty}$ and $(m_2, m_E) = 1$,
we have
\begin{equation}\label{nonCM-vertical}
[\Q(E[m]) : \Q] = [\Q(E[m_1]) : \Q] \cdot |\GL_2(\Z/m_2 \Z)|.
\end{equation}
Furthermore,  for any $\ell$, $d$, with $\ell \mid m_E$, $\ell \nmid d$, and for any $\delta \in \N$, 
by part (ii) of Lemma \ref{vertical-division}
we also have
\begin{eqnarray}\label{kappa-nonCM}
\left[
\Q\left(
E\left[
\ell^{v_\ell(m_E) + \delta} d
\right]
\right) 
:
\Q\left(
E\left[
\ell^{v_\ell(m_E)} d
\right]
\right) 
\right]
= \ell^{4 \delta}.
 \end{eqnarray}

Defining $f(m) := \frac{1}{[\Q(E[m]) : \Q]}$ and $g(m) := \frac{1}{|\GL_2(\Z/m \Z)|}$, 
we apply Lemma \ref{lemma-mult-2} with $M= m_E$ and
$\kappa = 4$.
Note that condition (i) follows immediately from the Chinese Remainder Theorem, 
condition (ii) follows  immediately from  
$$
\ds\sum_{m \geq 1} \frac{1}{|\GL_2(\Z/m \Z)|}
<
\ds\sum_{m \geq 1}
\frac{1}{m^2 \phi(m)^2}
< \infty,
$$
and conditions (iii) and (iv) follow from (\ref{nonCM-vertical}) - (\ref{kappa-nonCM}). We obtain
\begin{eqnarray}\label{sum-over-Q}
C_{d_2}(E)
& \leq & 
 C_{\tau(d_1)}(E)
 \nonumber
 \\
&=&
\ds\sum_{m \geq 1} \frac{1}{[\Q(E[m]) : \Q]}
\nonumber
\\
&\leq &
\zeta(4)
\left(
\ds\sum_{m | m_E} \frac{1}{[\Q(E[m]) : \Q]}
\right)
\left(
\ds\sum_{m \geq 1}
\frac{1}{ |\GL_2(\Z/m \Z)| }
\right)
\nonumber
\\
&\ll&
\ds\sum_{m | m_E} \frac{1}{[\Q(E[m]) : \Q]}.
\end{eqnarray}

To analyze the last sum, recall that
for any integer $m \geq 1$,
denoting by $\zeta_m$ a primitive $m$-th root of unity, we have
$\Q(\zeta_m) \subseteq \Q(E[m])$;
in particular,  we have $\phi(m) \mid [\Q(E[m]) : \Q]$.
Then
\begin{eqnarray*}
\ds\sum_{m \mid m_E} \frac{1}{[\Q(E[m]) : \Q]}
&\leq&
\ds\sum_{m \mid m_E} \frac{1}{\phi(m)} 
\\
&\leq &
\ds\prod_{\ell \mid m_E} \left(1 + \frac{1}{\ell - 1} \ds\sum_{r \geq 0} \frac{1}{\ell^{r}}\right)
\\
&=&
\ds\prod_{\ell\mid m_E} \left(1 + \frac{\ell}{(\ell - 1)^2}\right) 
\\
&=&
\ds\prod_{\ell \mid m_E} \left(1 + \frac{1}{\ell}\right)
\cdot
\ds\prod_{\ell \mid m_E} \left(1 + \frac{2 \ell - 1}{\ell^3 - \ell^2 - \ell + 1}\right)
\\
&\ll&
\ds\prod_{\ell \mid m_E} \left(1 + \frac{1}{\ell}\right)
\\
&\leq&
\exp \left(\ds\sum_{\ell \mid m_E} \frac{1}{\ell}\right),
\end{eqnarray*}
where, in the last line,
we used the elementary inequality $1 + t \leq \exp t$. 
To understand the last sum, we proceed in a standard way. We let $\ell_i$ denote the $i$-th prime
and we recall  that it satisfies the bound $\ell_i \leq i (\log i + \log \log i)$ (see \cite[Theorem 3, p. 69]{RoSc}).
Then
\begin{eqnarray*}
\ds\sum_{\ell \mid m_E} \frac{1}{\ell}
&\leq&
\ds\sum_{i \leq \omega(m_E)} \frac{1}{\ell_i}
\\
&=&
\log \log \ell_{\omega(m_E)} + \O(1)
\\
&\leq&
\log \log \omega(m_E) + \log \log \log \omega(m_E) + \O(1)
\\
&\ll&
\log \log \log m_E,
\end{eqnarray*}
where, in the second line, we  used Mertens' Theorem $\ds\sum_{\ell \leq n} \frac{1}{\ell} = \log \log n + \O(1)$, 
and, in the last line, we used
the bound $\omega(n) \leq \log n$.

The desired bounds for
$C_{d_2}(E)$, $C_{\tau(d_1)}(E)$ are obtained
by plugging the above estimate in (\ref{sum-over-Q})
and
by invoking Proposition \ref{mEboundlemma}.

\bigskip
To estimate $C_{d_1}(E) := C_{d_1, \text{non-CM}}(E)$, 
defining $f(m) := \frac{\phi(m)}{[\Q(E[m]) : \Q]}$ and $g(m) := \frac{\phi(m)}{|\GL_2(\Z/m \Z)|}$, 
we apply Lemma \ref{lemma-mult-2} with $M= m_E$ and
$\kappa = 3$.
Note that condition (i) follows immediately from the Chinese Remainder Theorem, 
condition (ii) follows  immediately from  
$$
\ds\sum_{m \geq 1} \frac{\phi(m)}{|\GL_2(\Z/m \Z)|}
<
\ds\sum_{m \geq 1}
\frac{1}{m^2 \phi(m)}
< \infty,
$$
and conditions (iii) and (iv) follow from (\ref{nonCM-vertical}) - (\ref{kappa-nonCM}). We obtain
\begin{equation}\label{sum-over-Q-d1}
C_{d_1}(E)
=\ds\sum_{m \geq 1} \frac{\phi(m)}{[\Q(E[m]) : \Q]}
\leq
\zeta(3)
\left(
\ds\sum_{m | m_E} \frac{\phi(m)}{[\Q(E[m]) : \Q]}
\right)
\left(
\ds\sum_{m \geq 1}
\frac{\phi(m)}{ |\GL_2(\Z/m \Z)| }
\right).
\end{equation}
Recalling that $\phi(m) \mid [\Q(E[m]) : \Q]$,
we deduce that
\begin{eqnarray*}
\ds\sum_{m \mid m_E} \frac{\phi(m)}{[\Q(E[m]) : \Q]}
&\leq&
\ds\sum_{m \mid m_E} \frac{\phi(m)}{\phi(m)} 
=\ds\sum_{m \mid m_E} 1 
= 
\tau(m_E)
 \ll_\varepsilon m_E^\varepsilon
\end{eqnarray*}
for any $\varepsilon > 0$.
The desired bound for
$C_{d_1}(E)$
is obtained
by plugging the above estimates in (\ref{sum-over-Q-d1})
and
by invoking Proposition \ref{mEboundlemma}.
\hfill $\square$

\bigskip
\section{Constants for Serre curves: proof of Theorem \ref{serrecurveconstant}}

In this section we prove closed formulae relating the constants of Conjectures \ref{d-one} - \ref{d-two}
to the idealized constants (\ref{c-d-one}) - (\ref{c-d-two}), as stated in Theorem \ref{serrecurveconstant}.
The key ingredients in the proof
are Corollary \ref{mE-nonCM} and the following lemma:
\begin{lemma}\label{lemma-mult-1}
Let 
$f, g : \N \backslash \{0\} \longrightarrow \mbc^\times$ 
be arithmetic functions satisfying the following:
\begin{enumerate} 
\item[(i)] 
$g$ is multiplicative; 
\item[(ii)]
$\ds\sum_{m \geq 1} |g(m)|$ converges.
\end{enumerate}
Assume that 
$\exists \; M \in \N \backslash \{0\}$,
$\exists \; \alpha \in (0, \infty)$
and
$\exists \; \kappa \in \N \backslash \{0, 1\}$
such that:
\begin{enumerate}
\item[(iii)]
$\forall m \in \N \backslash \{0\}$, we have
\begin{equation*}
f(m) = 
\begin{cases}
\alpha  g(m) & \text{if} \ M \mid m, 
\\ 
g(m) &  \ \text{else}; 
\end{cases} 
\end{equation*}
\item[(iv)]
$\forall \; m \mid M^{\infty}$, we have
$
g(m M) = m^{-\kappa} g(M).
$
\end{enumerate}
Then 
$$
\ds\sum_{m \geq 1} f(m) 
= 
\left( 1 + (\alpha-1)g(M) \ds\prod_{\ell \mid M} g_\ell^{-1}(1-\ell^{-\kappa})^{-1} \right) \ds\prod_\ell g_\ell,
 $$
 where, 
 for any rational prime $\ell$, 
$g_\ell := \ds\sum_{r\geq 0} g(\ell^r)$.
\end{lemma}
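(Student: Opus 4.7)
The plan is to split the sum $\sum_{m\geq 1}f(m)$ according to whether $M\mid m$ using hypothesis (iii), writing
\[
\sum_{m\geq 1}f(m) \;=\; \sum_{m\geq 1}g(m) \;+\; (\alpha-1)\sum_{M\mid m}g(m).
\]
The first sum is the standard Euler product $\prod_{\ell}g_\ell$: hypothesis (i) gives multiplicativity, and hypothesis (ii) provides absolute convergence, so the Euler factorization is valid. The entire problem therefore reduces to evaluating the biased sum $S:=\sum_{M\mid m}g(m)$ in closed form.

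To handle $S$, I would use the decomposition $m=m_1m_2$ with $m_1\mid M^\infty$ and $\gcd(m_2,M)=1$, which is unique and compatible with the multiplicativity of $g$. The constraint $M\mid m$, combined with $\gcd(m_2,M)=1$, forces $M\mid m_1$, so I can write $m_1=Mn$ with $n\mid M^\infty$. Hypothesis (iv) then gives $g(m_1)=g(Mn)=n^{-\kappa}g(M)$, and multiplicativity yields $g(m)=n^{-\kappa}g(M)\,g(m_2)$. This separates the sum cleanly:
\[
S \;=\; g(M)\left(\sum_{n\mid M^\infty}n^{-\kappa}\right)\left(\sum_{\gcd(m_2,M)=1}g(m_2)\right).
\]

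Now I would evaluate each factor. Since the integers $n\mid M^\infty$ are exactly $\prod_{\ell\mid M}\ell^{r_\ell}$ with $r_\ell\geq 0$, the first factor in parentheses telescopes into $\prod_{\ell\mid M}(1-\ell^{-\kappa})^{-1}$ via the geometric series (convergent because $\kappa\geq 2$). The second is an Euler product over primes $\ell\nmid M$, equal to $\prod_{\ell\nmid M}g_\ell=\bigl(\prod_{\ell}g_\ell\bigr)\bigl(\prod_{\ell\mid M}g_\ell^{-1}\bigr)$. Substituting and factoring out $\prod_\ell g_\ell$ from both terms in the original split produces exactly the claimed identity.

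The argument is essentially bookkeeping once the right decomposition is found; there is no real obstacle. The only point requiring care is verifying that the rearrangements are legitimate, which is where hypothesis (ii) is needed: the full sum $\sum|g(m)|$ converges, so every reordering of $\sum g(m)$ and every Fubini-type exchange over the factorizations $m=Mn\cdot m_2$ is justified.
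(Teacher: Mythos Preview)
Your proof is correct and follows essentially the same route as the paper: split off $(\alpha-1)\sum_{M\mid m}g(m)$ via (iii), factor $\sum g(m)$ as $\prod_\ell g_\ell$, and evaluate the biased sum by separating the $M^\infty$-part from the prime-to-$M$ part using (i), (iv), and the geometric series. The only cosmetic difference is that the paper first reindexes $m\mapsto mM$ and then decomposes, whereas you decompose $m$ first and then peel off the factor $M$ from $m_1$; the computations are identical.
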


\begin{proof}
This result can be  found in \cite[Lemma 3.12]{Ko}; we give its proof for completeness:
\begin{eqnarray*}
\ds\sum_{m \geq 1} f(m) 
&= &
\ds\sum_{m \geq 1} g(m) + (\alpha-1) \ds\sum_{M \mid m} g(m) 
\hspace*{5cm}
\text{(by (iii))}
\\
&=& 
\ds\prod_\ell g_\ell + (\alpha-1) \sum_{M \mid m} g(m)
\hspace*{5.7cm} 
\text{(by (i))}
\\
&= &
\ds\prod_\ell g_\ell + (\alpha-1) \ds\sum_{m \geq 1} g(m M) 
\\
&= &
\ds\prod_\ell g_\ell + (\alpha-1) 
\ds\sum_{
m_1 \mid M^\infty 
\atop 
(m_2, M) = 1
} 
g(m_1 m_2 M)
 \\
&= &
\ds\prod_\ell g_\ell 
+ 
(\alpha-1) 
\left( \ds\sum_{m_1 \mid M^\infty} g(m_1 M) \right)
 \left(\ds\sum_{(m_2, M) = 1} g(m_2) \right)
\hspace*{1.2cm}  
\text{(by (i))}
 \\
&=  &
\ds\prod_\ell g_\ell 
+ 
(\alpha-1) 
 \left(\ds\prod_{\ell \nmid M} g_\ell \right)
\left( g(M) \ds\sum_{m_1 \mid M^\infty} m_1^{-\kappa}  \right)
 \hspace*{1cm} 
 \text{(by (i)) and (iv))}
\\
&=  &
\ds\prod_\ell g_\ell 
+ 
(\alpha-1) 
\left( g(M) \ds\prod_{\ell \mid M} 
\left(\ds\sum_{r\geq 0} \ell^{-r\kappa } \right)  \right) 
\left(\ds\prod_{\ell \nmid M}  g_\ell \right)
\\
&=  &
\ds\prod_\ell g_\ell 
+ 
(\alpha-1) 
\left( g(M) \ds\prod_{\ell \mid M} (1-\ell^{-\kappa})^{-1}  \right) 
\left(\ds\prod_{\ell \nmid M} g_\ell \right)
\\
&=  &
\left( 1 + (\alpha-1) g(M) \ds\prod_{\ell \mid M} g_\ell^{-1}(1-\ell^{-\kappa})^{-1}  \right) \ds\prod_\ell g_\ell. 
\end{eqnarray*}

\end{proof}

\medskip

\noindent
{\bf{Proof of Theorem \ref{serrecurveconstant}}}.

\noindent
The proposition follows from 
part (iv) of Proposition \ref{mESerre},
Lemma  \ref{lemma-mult-1}
with $M = m_E$
and $f$, $g$, $\alpha$, $\kappa$ as described below,
and 
from summing geometric series.
Note that, by definition, $\ds\prod_\ell g_\ell$ is the idealized constant $C_{d_1}$, $C_{\tau(d_1)}$,  respectively $C_{d_2}$.

\medskip

\begin{center}
\begin{tabular}{c | c | c | c | c | c}
& $f(m)$ & $g(m)$ & $\alpha$ & $\kappa$ & $g_\ell$ 
\\ 
\hline
 $C_{d_1, non-CM}(E)$ & $\frac{\phi(m)}{[\Q(E[m]):\Q]}$ & $\frac{\phi(m)}{|\GL_2(\Z/m\Z)|}$& $\alpha = 2$ &$\kappa = 3$ & $1 + \frac{1}{\ell^3(1-\ell^{-2})(1-\ell^{-3})}$ 
 \\ 
 & & & &
 \\
  $C_{\tau(d_1)} (E)$ & $\frac{1}{[\Q(E[m]):\Q]}$ & $\frac{1}{|\GL_2(\Z/m\Z)|}$& $\alpha = 2$ &$\kappa = 4$ & $1 + \frac{1}{\ell^4(1-\ell^{-1})(1-\ell^{-2})(1-\ell^{-4})}$ 
 \\
 &  & & &
 \\
 $C_{d_2}(E)$ & $\frac{(-1)^{\omega(m)}\phi(\rad(m))}{m[\Q(E[m]):\Q]}$ & $\frac{(-1)^{\omega(m)}\phi(\rad(m))}{m|\GL_2(\Z/m\Z)|}$& $\alpha = 2$ &$\kappa = 5$ & $1 - \frac{1}{\ell^4(1-\ell^{-2})(1-\ell^{-5})}$ 
 \\
\end{tabular}
\end{center}

\hfill $\square$

\bigskip
\section{Averaging the constants over a family: proof of Theorem \ref{main-thm}}

In this section we prove Theorem \ref{main-thm} using the results of Sections 2-4 and following the approach initiated in \cite{Jo09}.
For this, let $A, B > 2$ and $n \in \N \backslash \{0\}$ be fixed and consider the  moment
\begin{eqnarray*}
\frac{1}{|{\mathcal{C}}(A, B)|}
\ds\sum_{E \in {\mathcal{C}}(A, B)  }
|C(E) - C|^n,
\end{eqnarray*}
where the pair $(C(E), C)$ is, respectively,
$(C_{d_1}(E), C_{d_1})$, 
$(C_{\tau(d_1)}(E), C_{\tau(d_1}))$,
and
$(C_{d_2}(E), C_{d_2})$, 
and where
$C_{d_1}(E)$ is 
$C_{d_1, \text{non-CM}}(E)$ if $E$ is without complex multiplication, 
and 
$C_{d_1, \text{CM}}(E)$ if $E$ is with complex multiplication.
Our strategy is to partition ${\mathcal{C}}(A, B)$ into 
the subset of elliptic curves with complex multiplication (CM curves),  
the subset of elliptic curves without complex multiplication and which are not Serre curves
(non-CM $\&$ non-Serre curves), 
and  the subset of Serre curves;
we then estimate each emerging subsum via different techniques.\\

 
To handle the contribution from elliptic curves with complex multiplication, using
 part (i) of Theorem \ref{constant-nonSerre} and  Lemma \ref{lemma-numcmcurves},
we obtain
\begin{eqnarray}\label{averageCM}
\frac{1}{|{\mathcal{C}}(A, B)|}
\ds\sum_{
E \in {\mathcal{C}}(A, B)  
\atop{E \ \text{CM}}
}
|C(E) - C|^n
\ll_n
\frac{1}{|{\mathcal{C}}(A, B)|}
\ds\sum_{
E \in {\mathcal{C}}(A, B)  
\atop{E \ \text{CM}}
}
1
\ll
\frac{1}{A} + \frac{1}{B}.
\end{eqnarray}


To handle the contribution from 
elliptic curves without complex multiplication, which are not Serre curves,
using part (ii) of Theorem \ref{constant-nonSerre}, 
as well as Theorem  \ref{serre-curves},
we obtain
\begin{eqnarray}\label{averagenonCMd2}
&&
\frac{1}{|{\mathcal{C}}(A, B)|}
\ds\sum_{
E \in {\mathcal{C}}(A, B)  
\atop{E \ \text{non-CM},
\atop{
\; \; \; \text{non-Serre}
}
}
}
|C(E) - C|^n
\nonumber
\\
&\ll_n&
\frac{
\left(
\log \log 
 \left\{
 \max\{A^3, B^2\} \cdot \log(\max\{A^3, B^2\})^{\gamma}
 \right\}
\right)^n
\cdot
(\log \min\{A, B\})^{\gamma'}
}{
\sqrt{\min\{A, B\}}
}
\end{eqnarray}
for $(C(E), C)$ equal to 
$(C_{\tau(d_1)}(E), C_{\tau(d_1}))$ or
$(C_{d_2}(E), C_{d_2})$, 
and
\begin{eqnarray}\label{averagenonCMd1}
&&
\frac{1}{|{\mathcal{C}}(A, B)|}
\ds\sum_{
E \in {\mathcal{C}}(A, B)  
\atop{E \ \text{non-CM},
\atop{
\; \; \; \text{non-Serre}
}
}
}
|C(E) - C|^n
\nonumber
\\
&\ll_{n, \varepsilon}&
\frac{
\left(
 \max\{A^3, B^2\} \cdot \log(\max\{A^3, B^2\})^{\gamma}
\right)^{n \varepsilon}
\cdot
(\log \min\{A, B\})^{\gamma'}
}{
\sqrt{\min\{A, B\}}
}
\end{eqnarray}
for $(C(E), C)$ equal to 
$(C_{d_1, \text{non-CM}}(E), C_{d_1})$. \\

To handle the contribution from 
 Serre curves,
using Theorem \ref{serrecurveconstant} and part (iii) of Proposition \ref{mESerreProp},
we obtain
\begin{eqnarray*}
&&
\frac{1}{|{\mathcal{C}}(A, B)|}
\ds\sum_{
E \in {\mathcal{C}}(A, B)  
\atop{E \ \text{Serre}}
}
|C(E) - C|^n
\ll
\frac{C^n}{|{\mathcal{C}}(A, B)|}
\ds\sum_{
E \in {\mathcal{C}}(A, B)  
\atop{E \ \text{Serre}}
}
\frac{1}{|\Delta_{\text{sf}}(E)|^{3 n}}.
\end{eqnarray*}
Next, following the approach of  \cite[Section 4.2]{Jo09},
we choose a parameter $z = z(A, B)$ and partition  the Serre curves $E$ in ${\mathcal{C}}(A, B)$ according to whether 
$ |\Delta_{\text{sf}}(E)| \leq z$ or not.
For the first subsum we use \cite[Lemma 22, p. 705]{Jo09},
while for the second subsum we use trivial estimates:
\begin{eqnarray*}
&&
\frac{C^n}{|{\mathcal{C}}(A, B)|}
\ds\sum_{
E \in {\mathcal{C}}(A, B)  
\atop{E \ \text{Serre}
\atop{
|\Delta_{\text{sf}}(E)| \leq z
}
}
}
\frac{1}{|\Delta_{\text{sf}}(E)|^{3 n}}
+
\frac{C^n}{|{\mathcal{C}}(A, B)|}
\ds\sum_{
E \in {\mathcal{C}}(A, B)  
\atop{E \ \text{Serre}
\atop{
|\Delta_{\text{sf}}(E)| > z
}
}
}
\frac{1}{|\Delta_{\text{sf}}(E)|^{3 n}}
\\
&\ll_n&
\frac{1}{A B}
\left(
\#\left\{
(a, b) \in \Z^2:
|a| \leq A, |b| \leq B,
4a^3 + 27 b^2 \neq 0,
|(4a^3 + 27 b^2)_{\text{sf}}| \leq z
\right\}
+
\frac{1}{z^{3 n} }
\right)
\\
&\ll&
\frac{1}{A}
+
 \frac{z (\log A)^7 (\log B)}{B}
+
\frac{1}{z^{3 n} A B}.
\end{eqnarray*}
Upon choosing
$$
z \asymp \left( \frac{1}{A (\log A)^7 (\log B)} \right)^{\frac{1}{3 n + 1}},
$$
we deduce that
\begin{eqnarray}\label{averageSerre}
&&
\frac{1}{|{\mathcal{C}}(A, B)|}
\ds\sum_{
E \in {\mathcal{C}}(A, B)  
\atop{E \ \text{Serre} }
}
|C(E) - C|^n
\ll_n
\frac{1}{A}
+
\frac{
(\log A)^{\frac{21 n}{3 n + 1}}
(\log B)^{\frac{3 n}{3 n + 1}}
}{
A^{\frac{1}{3 n + 1}}
B
}.
\end{eqnarray}
\\
Putting together (\ref{averageCM}), (\ref{averagenonCMd2}),  (\ref{averagenonCMd1}), and (\ref{averageSerre}), it suffices to show that if $A = A(x)$ and $B = B(x)$ are functions tending to infinity with $x$ and such that 
\begin{equation}\label{log_ratio_bounded} 
\limsup_{x \to \infty} \  \frac{\log{A(x)}}{\log{B(x)}} < \infty, 
\quad
\limsup_{x \to \infty} \ \frac{\log{B(x)}}{\log{A(x)}} < \infty, \end{equation}
then there exists an $\varepsilon > 0$ such that 
\[ \ds\lim_{x \rightarrow \infty}
\frac{
\left(
 \max\{A(x)^3, B(x)^2\} \cdot \log(\max\{A(x)^3, B(x)^2\})^{\gamma}
\right)^{\varepsilon}
\cdot
(\log \min\{A(x), B(x)\})^{\gamma'}
}{
\sqrt{\min\{A(x), B(x)\}}
}
= 0. \]
Assumption \eqref{log_ratio_bounded} implies that 
\[ \limsup_{x\to \infty} \ \frac{\log \left(\max\{A(x)^3,  B(x)^2 \}\right)}{\log \left(\min\{A(x), B(x)\}\right)} < \infty, \]
and so
\begin{equation*}
\limsup_{x\to \infty} \ \frac{\log \left(\max\{A(x)^3,  B(x)^2 \}\cdot \log(\max\{A(x)^3, B(x)^2\})^{\gamma} \right)}{\log \left(\sqrt{\min\{A(x), B(x)\}}\cdot (\log \min\{A(x), B(x)\})^{-\gamma'}\right)} \equalscolon \varepsilon_0 < \infty. 
\end{equation*}
For any $\varepsilon < \varepsilon_0$, we therefore have
\begin{equation*}
\hspace*{-0.5cm}
\lim_{x \to \infty} \varepsilon \cdot
 \log \left(\max\{A(x)^3,  B(x)^2 \}\cdot \log(\max\{A(x)^3, B(x)^2\})^{\gamma} \right) - \log \left(\sqrt{\min\{A(x), B(x)\}}\cdot (\log \min\{A(x), B(x)\})^{-\gamma'}\right) 
 = -\infty. 
\end{equation*}
Exponentiating finishes the argument.


%
%
%
%
%
%
%
%
%
%
%
%
%
%

\bigskip
\noindent
{\bf{Acknowledgments.}} 
This research started during the {\emph{Arizona Winter School 2016: Analytic Methods in Arithmetic Geometry}}, organized at the 
University of Arizona, Tucson, USA, during March 12-16, 2016.
We thank the conference organizers Alina Bucur, David Zureick-Brown, Bryden Cais, Mirela Ciperiani, and Romyar Sharifi for
all their time and support, and we thank the National Science Foundation for sponsoring our participation in this conference.



\bigskip
\bigskip
\bigskip


{\small{

}}

\end{document}